\let\origsection=\section \def\section{\@ifstar{\origsection*}{\mysection}} 
\def\mysection{\@startsection{section}{1}\z@{.7\linespacing\@plus\linespacing}{.5\linespacing}{\normalfont\scshape\centering\S}}
\numberwithin{equation}{section}
\numberwithin{figure}{section}
\setlist[enumerate]{label=(\arabic*), ref=(\arabic*)}
\renewcommand{\square}{\mathbin\Box}
\newtheorem{theorem}{Theorem}[section]
\newtheorem{lemma}[theorem]{Lemma}
\newtheorem{proposition}[theorem]{Proposition}
\newtheorem{corollary}[theorem]{Corollary}
\newtheorem{claim}[theorem]{Claim}
\newtheorem*{ubqconjecture}{The Ubiquity Conjecture}
\theoremstyle{definition}
\newtheorem{definition}[theorem]{Definition}
\newtheorem{remark}[theorem]{Remark}
\newtheorem{cordef}[theorem]{Corollary and Definition}
\newcommand{\Half}{\mathbb{H}}
\newcommand{\Z}{\mathbb{Z}}
\newcommand{\N}{\mathbb{N}}
\newcommand{\Nbb}{\mathbb{N}}
\newcommand{\Pcal}{{\mathcal P}}
\newcommand{\Qcal}{{\mathcal Q}}
\newcommand{\Rcal}{{\mathcal R}}
\newcommand{\Scal}{{\mathcal S}}
\newcommand{\Tcal}{{\mathcal T}}
\newcommand{\Fcal}{{\mathcal F}}
\renewcommand{\triangleleft}{\vartriangleleft}
\renewcommand{\leq}{\leqslant}
\renewcommand{\preceq}{\preccurlyeq}
\newcommand\subsetsim{\mathrel{%
  \ooalign{\raise0.2ex\hbox{$\subset$}\cr\hidewidth\raise-0.8ex\hbox{\scalebox{0.9}{$\sim$}}\hidewidth\cr}}}
\newcommand{\Gtribe}{{$G$-\text{tribe}}}
\newcommand{\Gsubtribe}{{$G$-\text{subtribe}}}
\DeclareMathOperator{\RG}{RG}
\title[Ubiquity of graphs with nowhere-linear end structure]{Ubiquity in graphs II: Ubiquity of graphs with nowhere-linear end structure}
\author[Bowler, Elbracht, Erde, Gollin, Heuer, Pitz, Teegen]{Nathan Bowler \and Christian Elbracht \and Joshua Erde \and J.~Pascal Gollin \and Karl Heuer \and Max Pitz \and Maximilian Teegen}
\address[Bowler, Elbracht, Pitz, Teegen]{Universit\"{a}t Hamburg, Department of Mathematics, Bundesstra{\ss}e 55 (Geomatikum), 20146 Hamburg, Germany}
\address[Erde]{Graz University of Technology, Institute of Discrete Mathematics, Steyrergasse 30, 8010 Graz, Austria}
\address[Gollin]{Institute for Basic Science (IBS), Discrete Mathematics Group, 55, Expo-ro, Yuseong-gu, Daejeon, Republic of Korea, 
34126}
\address[Heuer]{Technische Universit\"{a}t Berlin,
Institut f\"{u}r Softwaretechnik und Theoretische Informatik,
Ernst-Reuter-Platz 7, 10587 Berlin, Germany}
\email{nathan.bowler@uni-hamburg.de}
\email{christian.elbracht@uni-hamburg.de}
\email{erde@math.tugraz.at}
\email{pascalgollin@ibs.re.kr}
\email{karl.heuer@tu-berlin.de}
\email{max.pitz@uni-hamburg.de}
\email{maximilian.teegen@uni-hamburg.de}
\thanks{The fourth author was supported by the Institute for Basic Science (IBS-R029-C1).}
\thanks{The fifth author was supported by the European Research Council (ERC) under the European Union's Horizon 2020 research and innovation programme (ERC consolidator grant DISTRUCT, agreement No.\ 648527).}
\begin{document}

\begin{abstract}
    A graph~$G$ is said to be \emph{$\preceq$-ubiquitous}, where~$\preceq$ is the minor relation between graphs, if whenever~$\Gamma$ is a graph with ${nG \preceq \Gamma}$ for all~${n \in \mathbb{N}}$, then one also has ${\aleph_0 G \preceq \Gamma}$, where~${\alpha G}$ is the disjoint union of~$\alpha$ many copies of~$G$. 
    A well-known conjecture of Andreae is that every locally finite connected graph is $\preceq$-ubiquitous.
    
    In this paper we give a sufficient condition on the structure of the ends of a graph~$G$ which implies that~$G$ is $\preceq$-ubiquitous. 
    In particular this implies that the full-grid is $\preceq$-ubiquitous.
\end{abstract}

\maketitle
\date{}

\section{Introduction}

This paper is the second in a series of papers making progress towards a conjecture of Andreae on the \emph{ubiquity} of graphs. 
Given a graph~$G$ and some relation~$\triangleleft$ between graphs we say that~$G$ is \emph{$\triangleleft$-ubiquitous} if whenever~$\Gamma$ is a graph such that ${nG \triangleleft \Gamma}$ for all~${n \in \mathbb{N}}$, then ${\aleph_0 G \triangleleft \Gamma}$, where~${\alpha G}$ denotes the disjoint union of~$\alpha$ many copies of~$G$. 
For example, a classic result of Halin~\cite{H65} says that the ray is $\subseteq$-ubiquitous, where~$\subseteq$ is the subgraph relation. 

Examples of graphs which are not ubiquitous with respect to the subgraph or topological minor relation are known (see~\cite{A13} for some particularly simple examples). 
In~\cite{A02} Andreae initiated the study of ubiquity of graphs with respect to the minor relation~$\preceq$. 
He constructed a graph which is not $\preceq$-ubiquitous, however the construction relied on the existence of a counterexample to the well-quasi-ordering of infinite graphs under the minor relation, for which only examples of size at least the continuum are known~\cite{T88}. 
In particular, the question of whether there exists a countable graph which is not $\preceq$-ubiquitous remains open. 
Most importantly, however, Andreae~\cite{A02} conjectured that at least all locally finite graphs, those with all degrees finite, should be $\preceq$-ubiquitous.

\begin{ubqconjecture}
    Every locally finite connected graph is $\preceq$-ubiquitous.
\end{ubqconjecture}

In~\cite{A13} Andreae proved that his conjecture holds for a large class of locally finite graphs. 
The exact definition of this class is technical, but in particular his result implies the following.
\begin{theorem}[Andreae, {\cite[Corollary 2]{A13}}]
    \label{t:And2}
    Let~$G$ be a connected, locally finite graph of finite tree-width such that every block of~$G$ is finite. 
    Then~$G$ is $\preceq$-ubiquitous.
\end{theorem}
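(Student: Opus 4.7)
My plan is to exploit the strong tree-like structure implied by the hypothesis. Because every block of $G$ is finite and $G$ is locally finite, the block-cut tree $T_G$ has finite vertex-degree and each ``block node'' carries a finite graph; combined with finite tree-width, this lets us fix a rooted enumeration of $G$ as an increasing union $G = \bigcup_{n \in \N} G_n$ of finite connected subgraphs along $T_G$ such that $G_{n+1}$ is obtained from $G_n$ by attaching a single finite block at a cut-vertex, and $G_n$ is separated from $G \setminus G_n$ by a separator of bounded size (bounded by the tree-width plus one).

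The main argument is a compactness (K\"onig's lemma) argument. Assume $nG \preceq \Gamma$ for every $n$. For each $n$, fix a family $\Fcal_n$ of $f(n)$ vertex-disjoint minor models of $G$ in $\Gamma$, where $f(n) \to \infty$ fast enough to absorb repeated pigeonhole losses. Restricting each model in $\Fcal_n$ to its $G_n$-part yields a large family of disjoint minor models of $G_n$ in $\Gamma$. Because the separators attaching $G_n$ to $G \setminus G_n$ have bounded size, the relevant ``interface data'' (which branch sets meet which bounded-size cut in $\Gamma$, up to the relevant finite isomorphism type) is finite. A pigeonhole argument then shows that arbitrarily many of the $G_n$-models are equivalent in this finitary sense. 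Building a tree whose level-$n$ nodes record such compatibility classes and applying K\"onig's lemma produces an infinite coherent sequence of $G_n$-models, each extending the previous; their union is an $\aleph_0 G$-minor in $\Gamma$.

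The step I expect to be the main obstacle is guaranteeing that the $G_n$-models selected at stage $n$ can be extended inside $\Fcal_{n+1}$ to $G_{n+1}$-models while preserving disjointness \emph{across all countably many copies simultaneously} in the limit. Doing this requires more than counting models of $G_n$: one must track how each model attaches to $\Gamma$ through its bounded-size separator and ensure that the next finite block can be realised in a region of $\Gamma$ disjoint from all previously committed branch sets. The finiteness of both the blocks and the attachment interface, which is exactly what the hypothesis provides, is what keeps the state space at each step finite and makes the K\"onig argument go through. Extending such reasoning to graphs whose ends have non-linear structure (as for the full grid) is precisely where this finite-interface picture fails, which motivates the new methods developed in the present paper.
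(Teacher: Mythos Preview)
This theorem is not proved in the present paper: it is quoted from Andreae~\cite[Corollary~2]{A13}, and the paper only remarks that Andreae's proof ``employs deep results about well-quasi-orderings of labelled (infinite) trees~\cite{L71}''. So there is no in-paper proof to compare against, but your proposal can still be assessed against that description and against the statement itself.

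Your argument has a genuine gap at the step you yourself flag as the main obstacle, and it is fatal rather than merely technical. You claim that because the separator in $G$ between $G_n$ and $G\setminus G_n$ has bounded size, the ``interface data'' of a partial model has only finitely many types, so that pigeonhole and K\"onig's lemma apply. But the interface lives in $\Gamma$, not in $G$: the branch sets in $\Gamma$ realising those boundedly many separator vertices of $G$ are arbitrary connected subgraphs of an arbitrary host graph, with no bound on their size, shape, or position. There is no finite classification of such configurations, so the level sets of your tree are not finite and K\"onig's lemma does not apply. This is exactly why Andreae's actual argument requires the well-quasi-ordering of labelled infinite trees rather than mere compactness: WQO lets one find, among infinitely many incomparable-looking partial models in $\Gamma$, one that embeds into another in a way that permits extension, without ever reducing to a finite type space. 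Your sketch also leaves unexplained how a single infinite branch in the proposed tree yields $\aleph_0$ many pairwise disjoint $IG$'s rather than just one, but this secondary issue is moot given the first.
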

\footnotetext[1]{A precise definitions of rays, the ends of a graph, their degree, and what it means for a ray to converge to an end can be found in Section~\ref{s:prelim}.}
Note that every end in such a graph must have degree\footnotemark[1] one. 

Andreae's proof employs deep results about well-quasi-orderings of labelled (infinite) trees~\cite{L71}. 
Interestingly, the way these tools are used does not require the extra condition in Theorem~\ref{t:And2} that every block of~$G$ is finite and so it is natural to ask if his proof can be adapted to remove this condition. 
And indeed, it is the purpose of the present and subsequent paper in our series,~\cite{BEEGHPTIII}, to show that this is possible, i.e.\ that all connected, locally finite graphs of finite tree-width are $\preceq$-ubiquitous.

\begin{figure}[ht]
    \center
    \begin{tikzpicture}[scale=.3]
        \tikzstyle{ray}=[->,thick]
\node[blue, scale=2] at (-8,3) {$\mathcal{R}$};
\draw[->,thick,blue!30] (0,0)--(14,0);
\draw[->,thick,blue!30] (0,2)--(14,2);
\draw[->,thick,blue!30] (0,4)--(14,4);
\draw[->,thick,blue!30] (0,6)--(14,6);
\draw[very thick,blue] (0,0)--(-6,0);
\draw[very thick,blue] (0,2)--(-6,2);
\draw[very thick,blue] (0,4)--(-6,4);
\draw[very thick,blue] (0,6)--(-6,6);

\node[red!90!black, scale=2] at (7,14) {$\mathcal{S}$};
\draw[->,very thick,red!90!black] (4,8)--(4,12);
\draw[->,very thick,red!90!black] (6,8)--(6,12);
\draw[->,very thick,red!90!black] (8,8)--(8,12);
\draw[->,very thick,red!90!black] (10,8)--(10,12);
\draw[thick,red!30] (4,-4)--(4,8);
\draw[thick,red!30] (6,-4)--(6,8);
\draw[thick,red!30] (8,-4)--(8,8);
\draw[thick,red!30] (10,-4)--(10,8);

\draw [very thick, green!60!black] plot [smooth, tension=1.5] coordinates {(0,6) (2,4) (4,-2) (10,8)};
\draw [very thick, green!60!black] plot [smooth, tension=2] coordinates {(0,4) (3,4) (6,2) (8,8)};
\draw [very thick, green!60!black] plot [smooth, tension=2] coordinates {(0,2) (3,4) (4,8)};
\draw [very thick, green!60!black] plot [smooth, tension=2] coordinates {(0,0) (6,-2) (10,0) (6,8)};

\node[green!60!black, scale=2] at (7,-6) {$\mathcal{P}$};
    \end{tikzpicture}
    \caption{A linkage between~$\Rcal$ and~$\Scal$.}
    \label{fig:linkage}
\end{figure}

The present paper lays the groundwork for this extension of Andreae's result. 
The fundamental obstacle one encounters when trying to extend Andreae's methods is the following: 
In the proof we often have two families of disjoint rays ${\mathcal{R} = (R_i \colon i \in  I)}$ and ${\mathcal{S} = (S_j \colon j \in J)}$ in~$\Gamma$, which we may assume all converge\footnotemark[1] to a common end of~$\Gamma$, and we wish to find a \emph{linkage} between~$\mathcal{R}$ and~$\mathcal{S}$, that is, an injective function ${\sigma \colon I \rightarrow J}$ and a set~$\mathcal{P}$ of disjoint finite paths~$P_i$ from~${x_i \in R_i}$ to~${y_{\sigma(i)} \in S_{\sigma(i)}}$ such that the walks
\[
    \mathcal{T} = (R_ix_iP_iy_{\sigma(i)}S_{\sigma(i)} \colon i \in I)
\]
formed by following each~$R_i$ along to~$x_i$, then following the path~$P_i$ to~$y_{\sigma(i)}$, then following the tail of~$S_{\sigma(i)}$, form a family of disjoint rays (see Figure~\ref{fig:linkage}). 
Broadly, we can think of this as `re-routing' the rays~$\mathcal{R}$ to some subset of the rays in~$\mathcal{S}$. 
Since all the rays in~$\mathcal{R}$ and~$\mathcal{S}$ converge to the same end of~$\Gamma$, it is relatively simple to show that, as long as~${|I| \leq |J|}$, there is enough connectivity between the rays in~$\Gamma$ to ensure that such a linkage always exists. 

However, in practice it is not enough for us to be guaranteed the existence of some injection~$\sigma$ giving rise to a linkage, but instead we want to choose~$\sigma$ in advance, and be able to find a corresponding linkage afterwards. 

In general, however, it is possible that for certain choices of $\sigma$ no suitable linkage exists. 
Consider, for example, the case where~$\Gamma$ is the \emph{half-grid} (which we denote by ${\mathbb{Z} \square \mathbb{N}}$), which is the graph whose vertex set is~${\mathbb{Z} \times \mathbb{N}}$ and where two vertices are adjacent if they differ in precisely one co-ordinate and the difference in that co-ordinate is one. 
If we consider two sufficiently large families of disjoint rays~$\mathcal{R}$ and~$\mathcal{S}$ in~$\Gamma$, then it is not hard to see that both~$\mathcal{R}$ and~$\mathcal{S}$ inherit a linear ordering from the planar structure of~$\Gamma$, which must be preserved by any linkage between them. 

By analysing the possible kind of linkages which can arise between two families of rays converging to a given end, we will give a classification of ends of infinite degree, which we call \emph{thick}, into three different types depending on the possible linkages they support. 
Roughly all such ends will either be \emph{pebbly}, meaning that we can always find suitable linkages for all~$\sigma$ as above, \emph{half-grid-like}, and exhibit behaviour similar to to that of the half-grid~${\mathbb{Z} \square \mathbb{N}}$, or \emph{grid-like}, and exhibit behaviour similar to to that of the full-grid~${\mathbb{Z} \square \mathbb{Z}}$ (which is analogously defined as the half-grid but with~${\mathbb{Z} \times \mathbb{Z}}$ as vertex set). 
We will give precise definitions of these terms in Sections~\ref{s:pebblyend} and~\ref{s:gridandhalfgrid}.

\begin{theorem}
    \label{t:classification}
    Let~$\Gamma$ be a graph and let~$\epsilon$ be a thick end of~$\Gamma$. 
    Then~$\epsilon$ is either pebbly, half-grid-like or grid-like.
\end{theorem}

If appropriate ends of~$\Gamma$ are pebbly, then this freedom in choosing our linkages would allow us to follow Andreae's proof strategy in order to prove the ubiquity of~$G$. However, in fact the property of an end being pebbly is so strong that we do not need to follow Andreae's strategy for such graphs. More precisely, in an pebbly end we can use the existence of such linkages to directly build a $K_{\aleph_0}$-minor of~$\Gamma$ (See Lemma~\ref{l:pebblycompleteminor}), from which it follows that~${\aleph_0G \preceq \Gamma}$ for any countable graph $G$. In this way, Theorem~\ref{t:classification} can be thought of as a local structure theorem for the ends of a graph which don't contain a $K_{\aleph_0}$-minor. 

In this way, Theorem~\ref{t:classification} allows us to make some structural assumptions on the `host' graph $\Gamma$ when considering the question of~$\preceq$-ubiquity. However, more importantly, it also allows us to make some structural assumptions about~$G$. Roughly, if the ends of~$G$ do not have a particularly simple structure then the fact that~${nG \preceq \Gamma}$ for each~${n \in \Nbb}$ will imply that~$\Gamma$ must have a pebbly end.

Analysing this situation gives rise to the following definition: 
We say that an end~$\epsilon$ of a graph~$G$ is \emph{linear} if for every finite set~$\mathcal{R}$ of at least three disjoint rays in~$G$ which converge to~$\epsilon$ we can order the elements of~$\mathcal{R}$ as ${\mathcal{R} = \{R_1,R_2,\ldots,R_n\}}$ such that for each~${1 \leq k < i < \ell \leq n}$, the rays~$R_k$ and~$R_\ell$ belong to different ends of~${G - V(R_i)}$. 

For example, the half-grid has a unique end and it is linear. 
On the other end of the spectrum, let us say that a graph~$G$ has \emph{nowhere-linear end structure} if no end of~$G$ is linear. 

Our main theorem in this paper is the following.

\begin{restatable}{theorem}{nonlin}
    \label{t:nonlin}
    Every locally finite connected graph with nowhere-linear end structure is $\preceq$-ubiquitous.
\end{restatable}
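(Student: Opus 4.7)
The approach follows the general pattern of Andreae-style ubiquity proofs, building an $\aleph_0 G$-minor in $\Gamma$ by induction. Starting from the hypothesis $nG \preceq \Gamma$ for every $n \in \N$, a standard compactness argument (K\H{o}nig's lemma applied to a tree of coherent partial minor-embeddings of finite subgraphs of $G$) produces an infinite family $(\phi_i)_{i \in \N}$ of pairwise disjoint minor-embeddings $G \hookrightarrow \Gamma$ whose branch-sets on any fixed finite subgraph of $G$ agree up to isomorphism. A further pigeonhole step allows one to assume that for each end $\omega$ of $G$ and each ray $R \subseteq \omega$, the images $\phi_i(R)$ all converge in $\Gamma$ to a common end $\epsilon_\omega$. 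The problem then reduces to stitching infinitely many of the $\phi_i$ into a single embedding of $\aleph_0 G$ by redirecting rays near each $\epsilon_\omega$ without disturbing the already-fixed finite parts of the preceding copies.

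The technical heart of the proof is a \emph{linkage lemma} along the lines discussed in the introduction. Given disjoint finite families $\mathcal R = (R_1,\ldots,R_n)$ and $\mathcal S = (S_1,\ldots,S_m)$ of rays in $\Gamma$ with $n \le m$, all converging to a common end $\epsilon$, and an arbitrary prescribed injection $\sigma \colon [n] \to [m]$, the lemma asserts the existence of a linkage realizing $\sigma$ (cf.\ Figure~\ref{fig:linkage}), provided $\mathcal R$ arose as the image under some $\phi_i$ of the rays of a non-linear end $\omega$ of $G$. One first produces \emph{some} linkage by the usual Menger-type argument for rays converging to a common end of $\Gamma$, and then transforms it into one realizing $\sigma$ via a sequence of transpositions of ray endpoints. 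Each transposition of two rays $R_i, R_j$ requires a finite ``crossing'' path from $R_i$ to $R_j$ in $\Gamma$ that avoids the remaining rays of $\mathcal R$; the nowhere-linear hypothesis says precisely that for any ordering of $\mathcal R$ one can find a triple $R_k, R_i, R_\ell$ with $R_k, R_\ell$ lying in the same end of $G - V(R_i)$, and transporting this connectivity through $\phi$ and combining it with the abundant ray-connectivity toward $\epsilon$ yields the required crossing path.

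With the linkage lemma in hand, the induction proceeds as follows: having embedded $n$ disjoint copies of $G$ using finite portions of $\phi_1,\ldots,\phi_n$, pick an unused $\phi_j$, truncate each of its ray-images sufficiently far along its tail that the cut lies beyond every branch set used so far, and re-link these tails into the current partial embedding by applying the linkage lemma at each $\epsilon_\omega$ simultaneously; this produces an $(n+1)$-st copy of $G$ disjoint from the previous $n$. The main obstacle, where we expect the bulk of the work to lie, is coordinating the application of the linkage lemma across all ends of $G$ at once (of which there may be infinitely many) while preserving enough disjointness and coherence of the remaining $\phi_i$ that the next induction step is still available. This is precisely where the nowhere-linear hypothesis is indispensable: it permits an \emph{arbitrary} target injection $\sigma$ at each end, giving exactly the freedom to steer each new copy away from the portions of $\Gamma$ already consumed at every stage.
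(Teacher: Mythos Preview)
Your plan follows the Andreae template of building $\aleph_0 G$ copy by copy via linkages, but the paper deliberately avoids this route (as it remarks in the introduction). Instead, the paper shows that the end $\epsilon$ of $\Gamma$ at which the thick $G$-tribe concentrates must be \emph{pebbly}, hence $K_{\aleph_0}\preceq\Gamma$ (Corollary~\ref{c:pebblyubiq}), and since $G$ is countable this immediately gives $\aleph_0 G\preceq\Gamma$. The actual argument is: assume $\epsilon$ is not $k$-pebble-win, take $k+2$ disjoint $IG$s from one layer, and in each pull back a family of rays whose ray graph in $G$ is not a path (this is exactly what nowhere-linearity supplies via Lemma~\ref{l:linearraygraph}). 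By Lemmas~\ref{l:pullbackraygraph} and~\ref{l:rayinducedsubgraph} the resulting ray graph in $\Gamma$ contains $k+2$ disjoint non-path subgraphs, contradicting the structural constraint of Corollary~\ref{c:pebblyendstructure}.

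Your proposed linkage lemma has a genuine gap. Non-linearity of the end $\omega$ in $G$ only tells you that the ray graph $RG_G(R_1,\dots,R_n)$ is not a path; it does \emph{not} let you realise an arbitrary injection $\sigma$ by transpositions. A ray graph that is, say, a star $K_{1,3}$ or a short cycle is non-linear but is not $n$-pebble-win: you cannot swap two pebbles without free vertices to manoeuvre through. The paper's Section~\ref{s:pebblegame} makes this precise: achieving arbitrary permutations requires the ray graph to be $k$-pebble-win, and Lemma~\ref{l:kpebbstructure} shows this generally needs at least $k+2$ vertices and imposes strong structural constraints when it fails. Crucially, the paper obtains the needed room by pooling rays from \emph{many} disjoint $IG$s at once, not from a single $\phi_i$ as in your setup. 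So your step ``transform some linkage into one realising $\sigma$ via transpositions, using non-linearity to find crossing paths'' does not go through as stated.

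The second obstacle you flag --- coordinating linkages across possibly infinitely many ends of $G$ simultaneously --- is real, and the paper's $K_{\aleph_0}$ shortcut is precisely what lets it bypass this difficulty entirely. The Andreae-style inductive construction you sketch is closer to what the sequel paper~\cite{BEEGHPTIII} needs for graphs that \emph{do} have linear ends, and there the analysis is substantially more delicate.
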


More generally, these ideas will allow us to assume, when following the proof strategy of Andreae, that certain ends of~$\Gamma$ are grid-like or half-grid-like, and that certain ends of~$G$ are linear. 
The fact that~$G$ is linear will mean that the only functions~$\sigma$ that we have to consider are ones which preserve the linear ordering on the rays, and the fact that~$\Gamma$ is grid- or half-grid-like will allow us to deduce that appropriate linkages exist for such functions. 
This will be a key part of our extension of Theorem~\ref{t:And2} in~\cite{BEEGHPTIII}.

However, independently of these potential later developments, our methods already allow us to establish new ubiquity results for many natural graphs and graph classes.

As a first concrete example, consider the full-grid ${G = \mathbb{Z} \square \mathbb{Z}}$. 
$G$ is one-ended, and for any ray~$R$ in~$G$, the graph~${G - V(R)}$ still has at most one end. 
Hence the unique end of~$G$ is non-linear, and so Theorem~\ref{t:nonlin} has the following corollary: 

\begin{restatable}{corollary}{fullgrid}
    \label{c:fullgrid}
    The full-grid is $\preceq$-ubiquitous.
\end{restatable}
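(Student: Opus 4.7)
The plan is to derive Corollary~\ref{c:fullgrid} directly from Theorem~\ref{t:nonlin}: I will verify that the full grid $G = \mathbb{Z} \square \mathbb{Z}$ is locally finite, connected, and has nowhere-linear end structure. Local finiteness and connectedness are immediate. To see that $G$ is one-ended: for any finite $S$, pick $N$ with $S \subseteq B_N := [-N,N]^2 \cap \mathbb{Z}^2$; the subgraph induced on the vertices outside $B_N$ is easily seen to be connected (walk along the perimeter, then extend outward), so $G - S$ has a unique infinite component. The unique end $\epsilon$ clearly has infinite degree, witnessed for example by the pairwise disjoint horizontal rays $R^{(k)} = (0,k)(1,k)(2,k)\cdots$ for $k \in \mathbb{N}$.

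The crucial point, highlighted in the paragraph preceding the corollary, is the key claim that \emph{for every ray $R \subseteq G$, the graph $G - V(R)$ has at most one end}. I expect this to be the main obstacle, and its proof rests on the planarity of $G$: in the standard planar embedding of $G$ in $\mathbb{R}^2$, the ray $R$ corresponds to a simple polygonal arc going to infinity, and $\mathbb{R}^2$ minus such an arc is a connected, simply connected open region; the ends of $G - V(R)$ correspond to the ends of this region, of which there is only one. A direct combinatorial version goes as follows. Given a finite $S \subseteq V(G) \setminus V(R)$, choose $N$ large enough that $S \subseteq B_N$ and that from some index onward the ray $R$ stays outside $B_N$ forever (such $N$ exists by a routine compactness argument: $R$ meets each finite box in only finitely many vertices and ultimately escapes to infinity, so after finitely many enlargements of the box the excursions stabilise). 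One then shows that the induced subgraph on $\{v : v \notin B_N,\ v \notin V(R)\}$ is connected, which implies that the infinite components of $G - V(R) - S$ all merge into a single one. Connectedness of that subgraph is checked by routing any two vertices along the perimeter of a suitably large super-box, using that $R$ meets the exterior of $B_N$ in a single outgoing arc with no ``closed off'' region to obstruct a detour.

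Given the key claim, non-linearity of $\epsilon$ is immediate. Choose any three pairwise disjoint rays $R_1, R_2, R_3$ converging to $\epsilon$ (for instance $R^{(0)}, R^{(1)}, R^{(2)}$) and consider any ordering of them; for the middle ray $R_i$, the graph $G - V(R_i)$ has at most one end, so the remaining two rays cannot lie in distinct ends of $G - V(R_i)$, violating the defining condition of linearity. Hence $\epsilon$ is non-linear, so $G$ has nowhere-linear end structure, and Theorem~\ref{t:nonlin} yields that $G$ is $\preceq$-ubiquitous.
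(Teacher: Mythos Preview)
Your strategy matches the paper's: both invoke the key claim that $G - V(R)$ has at most one end for every ray $R$, deduce that the unique end of $G$ is non-linear, and apply Theorem~\ref{t:nonlin}. The paper phrases the middle step slightly differently, observing via Lemma~\ref{l:connraygraph} that every ray graph on at least three rays is $2$-connected (deleting any vertex $i$ yields $RG_{G-V(R_i)}(\mathcal{R}\setminus\{R_i\})$, which is connected since $G-V(R_i)$ has a single end) and hence not a path; your direct check of the definition on a triple of rays is equally valid and arguably more transparent.

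One point to flag: your combinatorial sketch of the key claim---which the paper, for its part, simply asserts without proof---contains an error. You claim one can choose $N$ so that $R$ meets the exterior of $B_N$ in ``a single outgoing arc'', but this is false in general. For instance, let $R$ snake back and forth, running along $y=2k$ from $x=0$ out to $x=10^k$, up one step, and back along $y=2k+1$ to $x=0$, for $k=0,1,2,\ldots$. For every $N$, each excursion with $\log_{10} N < k < N/2$ protrudes outside $B_N$ and then re-enters, contributing a separate arc; enlarging $N$ never eliminates all of them simultaneously, so no ``compactness argument'' of the kind you describe is available. The key claim is nonetheless true, but its justification genuinely needs the planar fact that a simple arc to infinity does not disconnect $\mathbb{R}^2$; the box argument as written does not establish it.
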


Using an argument similar in spirit to that of Halin~\cite{halin1975problem}, we also establish the following theorem in this paper:

\begin{restatable}{theorem}{halfgrid}
    \label{t:halfgrid}
    Any connected minor of the half-grid ${\mathbb{N} \square \mathbb{Z}}$ is $\preceq$-ubiquitous. 
\end{restatable}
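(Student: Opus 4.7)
My plan is to follow an argument in the spirit of Halin's 1975 proof of the ubiquity of the double ray \cite{halin1975problem}. Since the unique end of the half grid is linear, Theorem~\ref{t:nonlin} does not apply, and a separate Halin-style analysis of a suitable end of $\Gamma$ is required.

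First I would dispose of the case where $G$ is finite: if $nG\preceq\Gamma$ for every $n\in\mathbb{N}$, then by the standard compactness argument (organizing finite systems of disjoint $G$-minors of $\Gamma$ into a finitely-branching tree and applying K\"onig's Lemma) one obtains $\aleph_0 G\preceq\Gamma$.

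So assume $G$ is infinite, and fix a minor embedding $\phi\colon G\preceq\mathbb{N}\square\mathbb{Z}$. This endows $G$ with a planar structure in a half-plane whose rays all accumulate in a linearly ordered fashion at the boundary, so every end of $G$ inherits a canonical linear order on its rays from the end of the half grid. For each $n$, pick disjoint copies $G^1,\ldots,G^n$ of $G$ inside $\Gamma$. Halin's ubiquity theorem for the ray, combined with a pigeonhole argument over the rays contributed by the various copies, should yield an end $\epsilon$ of $\Gamma$ that accumulates infinitely many rays drawn from the $G^i$. I would then inductively construct a disjoint $\aleph_0$-family of $G$-minors of $\Gamma$, each converging to $\epsilon$: at stage $k$, the hypothesis $(k+1)G\preceq\Gamma$ supplies a further copy of $G$, and via a linkage argument in the spirit of Figure~\ref{fig:linkage}, this copy can be rerouted through $\epsilon$ past the $k$ copies already chosen, while retaining the full branch-set structure inherited from~$\phi$.

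The main technical obstacle is that a connected minor of the half grid can have ends of infinite degree, so each rerouting step must be performed simultaneously on a potentially unbounded family of rays, not just on a single ray or a pair as in Halin's double-ray case. The crucial structural input is the linearity of $\epsilon$ together with the planar combing inherited from $\phi$: the rays at $\epsilon$ carry a canonical order preserved under linkage, so after passing to sufficiently deep cofinal tails one can always push a new copy of $G$ past the finitely many previous ones without disturbing them. Iterating this construction yields the desired $\aleph_0 G\preceq\Gamma$.
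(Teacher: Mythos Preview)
Your proposal misses the key shortcut and, as written, has real gaps.

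The paper's proof is very short and does not build $G$-minors one at a time. By the concentration lemmas (Lemmas~\ref{l:concentrated} and~\ref{l:concentratedatthin}), either $\aleph_0 G\preceq\Gamma$ outright, or some thick $G$-tribe is concentrated at a \emph{thick} end~$\epsilon$ of~$\Gamma$. Halin's grid theorem (Theorem~\ref{thm_Halin}) then gives $\Half\leq\Gamma$, and Halin's self-similarity observation $\aleph_0\Half\leq\Half$ (Lemma~\ref{lem_halfgridubiquity}) finishes things:
\[
\aleph_0 G\;\preceq\;\aleph_0(\mathbb{N}\square\mathbb{Z})\;\preceq\;\aleph_0\Half\;\leq\;\Half\;\leq\;\Gamma.
\]
No linkages and no inductive placement of $G$-minors are needed; the finite and infinite cases are handled uniformly.

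In your approach, the compactness step for finite $G$ is not as stated: the tree of finite disjoint families of $G$-minors in an infinite $\Gamma$ is typically not finitely branching, so K\"onig's Lemma does not apply (the finite case is easy, but via a maximality/pigeonhole argument instead). More importantly, in the infinite case you assume that the end $\epsilon$ of $\Gamma$ is linear and use this to comb new copies past old ones. Nothing forces $\epsilon$ to be linear: $\Gamma$ is an arbitrary host graph, and its ends may have any structure. Even granting linearity, your inductive step is underspecified: linking the \emph{rays} of a fresh copy to tails at $\epsilon$ beyond the earlier copies does not by itself produce a $G$-minor disjoint from them, since all the non-ray branch sets still have to live somewhere in $\Gamma$, and the planar structure of $G$ gives you no control over where they land in $\Gamma$. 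The whole point of routing through Halin's theorem is to replace $\Gamma$ by an actual half-grid, inside which $\aleph_0 G$ already sits for free.
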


Since every countable tree is a minor of the half-grid, Theorem~\ref{t:halfgrid} implies that all countable trees are $\preceq$-ubiquitous, see Corollary~\ref{cor_trees}. 
We remark that while it has been shown that all trees are ubiquitous with respect to the topological minor relation,~\cite{BEEGHPTI}, the question of whether all uncountable trees are $\preceq$-ubiquitous has remains open, and we hope to resolve this in a paper in preparation. 

In a different direction, if~$G$ is any locally finite connected graph, then it is possible to show that~${G \square \mathbb{Z}}$ or~${G \square \mathbb{N}}$ either have nowhere-linear end structure, or are either the full-grid or a subgraph of the half-grid. Hence, Theorems~\ref{t:nonlin} and~\ref{t:halfgrid} and Corollary~\ref{c:fullgrid} have the following corollary.

\begin{restatable}{theorem}{gridprod}
    \label{t:gridprod}
    For every locally finite connected graph~$G$, both~${G \square \mathbb{Z}}$ and~${G \square \mathbb{N}}$ are $\preceq$-ubiquitous.
\end{restatable}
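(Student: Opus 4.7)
The plan is to establish a clean dichotomy for every locally finite connected graph $G$: either $G\square\mathbb{Z}$ and $G\square\mathbb{N}$ both have nowhere-linear end structure, or they embed as connected subgraphs of the half grid. In the former case $\preceq$-ubiquity follows from Theorem~\ref{t:nonlin}; in the latter from Theorem~\ref{t:halfgrid}, since any connected subgraph of the half grid is in particular a connected minor of it. The single exception to this clean split is $G\square\mathbb{Z}$ when $G$ itself is a double ray, in which case the product is the full grid and Corollary~\ref{c:fullgrid} applies directly.

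I would verify the dichotomy by cases on the structure of $G$. Since $G$ is locally finite and connected, if $\Delta(G)\leq 2$ then $G$ is a finite path, a ray, a double ray, or a finite cycle. In the first three of these subcases the vertex set of $G$ embeds linearly into $\mathbb{Z}$ (indeed into $\mathbb{N}$ except for the double ray), giving a trivial subgraph embedding of $G\square\mathbb{N}$ into $\mathbb{Z}\square\mathbb{N}$, and of $G\square\mathbb{Z}$ into $\mathbb{N}\square\mathbb{Z}$ whenever $G$ is not a double ray. Otherwise $G$ contains either a cycle or a vertex of degree at least three, and in either subcase I claim we can find three distinct vertices $u_1,u_2,u_3\in V(G)$ with the property that, for each $i$, the other two lie in a common component of $G-u_i$: if $v\in V(G)$ has three distinct neighbours $u_1,u_2,u_3$ then $v$ itself witnesses this in each $G-u_i$; and if $C$ is a cycle in $G$ then for any three of its vertices the two arcs of $C$ around the removed vertex do.

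For such a triple, the column-rays $R_i=\{u_i\}\times T$ (with $T$ a tail of $\mathbb{N}$ or of $\mathbb{Z}$ pointing towards a prescribed end) are disjoint rays in the product converging to a common end, and removing the middle column of any ordering leaves the other two joined in $(G-u_i)\square T$ and hence in a single end of the product minus that column. Thus no ordering witnesses linearity, and the end is non-linear. A brief end-analysis, via the standard observation that for any finite vertex set $S$ the \emph{high slab} of sufficiently large coordinate is connected, disjoint from $S$, and absorbs every tail of every column, shows that $G\square\mathbb{N}$ always has a unique end, while $G\square\mathbb{Z}$ has one end when $G$ is infinite and two ends (the $+$ and $-$ directions of $\mathbb{Z}$) when $G$ is finite. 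Applying the triple construction in each relevant direction then shows that \emph{every} end of the product is non-linear, completing the dichotomy. The main obstacle is this last end-analysis in the presence of infinitely many ends of $G$ itself, but the escape-to-large-coordinate argument collapses all such directions into a single end of the product and so is in fact routine.
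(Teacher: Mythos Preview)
Your proposal is correct and follows essentially the same approach as the paper: the same case distinction on $G$ (path/ray into the half grid, double ray giving the full grid, otherwise nowhere-linear end structure via column rays), together with the same brief end analysis of the product. The only difference is cosmetic: the paper verifies non-linearity through the ray-graph criterion of Lemma~\ref{l:linearraygraph}---it takes any finite connected non-path subgraph $G'\subseteq G$ and notes that $G'$ embeds into the ray graph of the columns indexed by $V(G')$, so that ray graph cannot be a path---whereas you verify non-linearity directly from the definition using a triple of columns; both are short and equally valid.
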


Finally, we will also show the following result about non-locally finite graphs. 
For~${k \in \mathbb{N}}$, we let the \emph{$k$-fold dominated ray} be the graph~$DR_k$ formed by taking a ray together with~$k$ additional vertices, each of which we make adjacent to every vertex in the ray. 
For~${k \leq 2}$, $DR_k$ is a minor of the half-grid, and so ubiquitous by Theorem~\ref{t:halfgrid}. 
In our last theorem, we show that~$DR_k$ is ubiquitous for all~${k \in \mathbb{N}}$.

\begin{restatable}{theorem}{dominatedray}
    \label{t:dominatedray}
    The $k$-fold dominated ray~$DR_k$ is $\preceq$-ubiquitous for every~${k \in \mathbb{N}}$.
\end{restatable}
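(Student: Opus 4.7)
I would proceed by induction on $k$. For $k \le 2$ the statement is already immediate: $DR_k$ is a minor of the half grid, so Theorem \ref{t:halfgrid} applies. For $k \ge 3$, assume $DR_{k-1}$ is $\preceq$-ubiquitous and fix a graph $\Gamma$ with $nDR_k \preceq \Gamma$ for every $n \in \N$. The key structural reformulation is that, given any ray $R$ in $\Gamma$ together with pairwise disjoint connected subgraphs $S_1,\dots,S_k$, each disjoint from $R$ and each sending infinitely many edges into $R$, one recovers a $DR_k$-minor by splitting $R$ into consecutive finite arcs $B_0,B_1,\dots$ such that every $B_i$ receives at least one edge from each $S_j$ (a straightforward interleaving, valid because each $S_j$ has infinitely many attachment points along $R$). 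Hence it suffices to find an end $\epsilon$ of $\Gamma$, a family $(R_i)_{i \in \N}$ of pairwise disjoint rays converging to $\epsilon$, and pairwise disjoint connected subgraphs $(S_i^j)_{i \in \N, 1 \le j \le k}$, disjoint also from $\bigcup_i R_i$, such that each $S_i^j$ has infinitely many edges into $R_i$.

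To produce this data I would split the argument into two stages. \textbf{Stage 1 (rich end).} Since $nDR_k \preceq \Gamma$ yields $n$ disjoint rays in $\Gamma$ for every $n$, Halin's theorem together with a pigeonhole over ends (and the fact that end degrees are either finite or $\aleph_0$) produces an end $\epsilon$ of $\Gamma$ with $\aleph_0$ disjoint rays converging to it. \textbf{Stage 2 (many disjoint dominators).} Each of the $n$ disjoint $DR_k$-minors contributes $k$ disjoint dominating branch sets of its ray's end, so $\Gamma$ contains arbitrarily many pairwise disjoint connected subgraphs dominating ends. A dichotomy then applies: either infinitely many distinct ends of $\Gamma$ admit dominators, in which case greedy selection of one $DR_k$-minor per end, using that rays converging to distinct ends are eventually separated, directly yields $\aleph_0$ pairwise disjoint $DR_k$-minors; or only finitely many ends receive dominators, and pigeonhole then supplies $\aleph_0$ pairwise disjoint dominators of a single common end, which we may take to be $\epsilon$.

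The main technical obstacle lies in the simultaneous realization step tying Stages~1 and~2 together: having fixed both the rays $(R_i)_{i \in \N}$ and a family of $\aleph_0$ disjoint dominators of $\epsilon$, we must arrange that each dominator, assigned to some $R_i$, has infinitely many edges specifically into $R_i$ (rather than merely into some ray in $\epsilon$) and is disjoint from $\bigcup_i R_i$. This is a linkage / re-routing problem in the spirit of those described in the introduction, made harder by the fact that $\Gamma$ need not be locally finite: a dominating branch set may itself be infinite and even contain a ray in $\epsilon$, and such rays must be either absorbed into the $R_i$-system or trimmed away without destroying domination. Resolving this is where the inductive hypothesis is put to work, since the existence of $\aleph_0 DR_{k-1}$-minors supplies a pre-aligned skeleton of $\aleph_0$ disjoint rays each equipped with $k-1$ dominators, reducing the task to producing one further disjoint dominator per ray from the surplus guaranteed by $nDR_k \preceq \Gamma$ for sufficiently large $n$.
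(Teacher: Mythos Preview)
Your outline diverges completely from the paper's argument and, more importantly, does not close. The paper makes no use of induction on $k$; instead, for $k \geq 3$ it uses the tribe machinery of Section~\ref{s:tribes} to concentrate a thick $DR_k$-tribe at an end $\epsilon$, disposes of the case where $\epsilon$ has infinitely many dominating vertices via Proposition~\ref{p:infdomTKaleph}, and then shows that $\epsilon$ is \emph{pebbly}. The point is that inside each tidy $IG$ member $H$, the pullback of each dominating star $S_i$ is a locally finite infinite tree (no vertex of $H$ dominates $\epsilon$), hence contains a ray $R_{H,i}$, and in the ray graph $RG_H$ each such $R_{H,i}$ is adjacent to the pullback $R_H$ of the base ray. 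This forces a vertex of degree $k \geq 3$ in every such ray graph; taking $r+1$ disjoint members yields $r+1$ such high-degree vertices in $RG_\Gamma$, contradicting Corollary~\ref{c:pebblyendstructure} for any $r$ witnessing non-pebbliness. Then Corollary~\ref{c:pebblyubiq} finishes.

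The gap in your plan is precisely the step you flag as the ``main technical obstacle'': the inductive hypothesis does not resolve it. Knowing $\aleph_0 DR_{k-1} \preceq \Gamma$ hands you a family $(M_i)_{i\in\N}$ of disjoint $DR_{k-1}$-minors, but these bear no relation whatsoever to the $nDR_k$-minors in your hypothesis; there is no mechanism to route a \emph{further} dominator, disjoint from everything already chosen, onto the specific ray of each $M_i$. The ``surplus'' from $nDR_k$ lives in its own minors with their own rays, and converting that surplus into a $k$th dominator of $R_i$ is a linkage problem at least as hard as the original question. Your Stage~2 dichotomy is also not sound as stated: even if $DR_k$-minors appear at infinitely many distinct ends, the minors themselves are infinite subgraphs and need not be pairwise disjoint, so ``greedy selection of one $DR_k$-minor per end'' does not work without the concentration lemmas you are trying to bypass. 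In short, the proposal identifies the right difficulty but offers no tool to overcome it; the paper's pebble-pushing framework is exactly the missing ingredient.
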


The paper is structured as follows: 
In Section~\ref{s:prelim} we introduce some basic terminology for talking about minors. 
In Section~\ref{s:raygraph} we introduce the concept of a \emph{ray graph} and \emph{linkages} between families of rays, which will help us to describe the structure of an end. 
In Sections~\ref{s:pebblegame} and~\ref{s:pebblyend} we introduce a pebble-pushing game which encodes possible linkages between families of rays and use this to give a sufficient condition for an end to contain a countable clique minor. 
In Sections~\ref{s:nonpebbly} and~\ref{s:gridandhalfgrid} we prove Theorem~\ref{t:classification}, classifying the thick ends which are non-pebbly. 
In Section~\ref{s:tribes} we re-introduce some concepts from~\cite{BEEGHPTI} and show that we may assume that the $G$-minors in~$\Gamma$ are \emph{concentrated} towards some end~$\epsilon$ of~$\Gamma$. 
In Section~\ref{s:halfgrid} we use the results of the previous section to prove Theorem~\ref{t:halfgrid} and finally in Section~\ref{s:nonlinear} we prove Theorem~\ref{t:nonlin} and its corollaries.

\section{Preliminaries}
\label{s:prelim}

In our graph theoretic notation we generally follow the textbook of Diestel~\cite{D16}. 
Given two graphs~$G$ and~$H$ the \emph{cartesian product} ${G \square H}$ is a graph with vertex set~${V(G) \times V(H)}$ with an edge between~${(a,b)}$ and~${(c,d)}$ if and only if~${a = c}$ and~${bd \in E(H)}$ or~${ac \in E(G)}$ and~${b = d}$.

\begin{definition}
	A one-way infinite path is called a \emph{ray} and a two-way infinite path is called a \emph{double ray}.
    
    For a path or ray~$P$ and vertices~${v,w \in V(P)}$, let~${vPw}$ denote the subpath of~$P$ with endvertices~$v$ and~$w$. 
    If~$P$ is a ray, let~$Pv$ denote the finite subpath of~$P$ between the initial vertex of~$P$ and~$v$, 
    and let~$vP$ denote the subray (or \emph{tail}) of~$P$ with initial vertex~$v$.
    
    Given two paths or rays~$P$ and~$Q$ which are disjoint but for one of their endvertices, we write~$PQ$ for the \emph{concatenation of~$P$ and~$Q$}, that is the path, ray or double ray~${P \cup Q}$. 
    Moreover, if we concatenate paths of the form~$vPw$ and~$wQx$, then we omit writing~$w$ twice and denote the concatenation by~$vPwQx$. 
\end{definition}

\begin{definition}[{Ends of a graph, cf.~\cite[Chapter~8]{D16}}]
    An \emph{end} of an infinite graph~$\Gamma$ is an equivalence class of rays, where two rays~$R$ and~$S$ are equivalent if and only if there are infinitely many vertex disjoint paths between~$R$ and~$S$ in~$\Gamma$. 
    We denote by~${\Omega(\Gamma)}$ the set of ends of~$\Gamma$.
    
    We say that a ray~${R \subseteq \Gamma}$ \emph{converges} (or \emph{tends}) to an end~$\epsilon$ of~$\Gamma$ if~$R$ is contained in $\epsilon$. 
    In this case we call~$R$ an \emph{$\epsilon$-ray}.
    
    Given an end~${\epsilon \in \Omega(\Gamma)}$ and a finite set~${X \subseteq V(\Gamma)}$ there is a unique component of~${\Gamma - X}$ which contains a tail of every ray in~$\epsilon$, which we denote by~${C(X,\epsilon)}$. 
    
    For an end~${\epsilon \in \Omega(\Gamma)}$ we define the \emph{degree} of~$\epsilon$ in~$\Gamma$ as the supremum in ${\Nbb \cup \{ \infty \}}$ of the set ${ \{ |\mathcal{R}| \; \colon \; \mathcal{R} \textnormal{ is a set of disjoint } \epsilon \textnormal{-rays} \} }$. 
    Note that this supremum is in fact an attained maximum, i.e.~for each end~$\epsilon$ of~$\Gamma$ there is a set~$\mathcal{R}$ of vertex-disjoint $\epsilon$-rays with~${|\mathcal{R}| = \deg(\omega)}$, as proved by Halin~\cite[Satz~1]{H65}. 
    If an end has finite degree, we call it \emph{thin}. Otherwise, we call it \emph{thick}.
    
    A vertex~${v \in V(\Gamma)}$ \emph{dominates} an end~${\epsilon \in \Omega(\Gamma)}$ if there is a ray~${R \in \omega}$ such that there are infinitely many $v$\,--\,$R$\,-paths in~$\Gamma$ that are vertex disjoint apart from~$v$. 
\end{definition}

We will use the following two basic facts about infinite graphs.

\begin{proposition}
    \label{p:rayorinfvertex}\cite[Proposition~8.2.1]{D16}
    An infinite connected graph contains either a ray or a vertex of infinite degree.
\end{proposition}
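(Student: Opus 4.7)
The plan is to prove the contrapositive: assuming $G$ is locally finite (every vertex has finite degree), we construct a ray. Fix any vertex $v_0 \in V(G)$ and stratify the vertex set by distance from $v_0$, setting $V_n := \{v \in V(G) : d(v_0, v) = n\}$ for each $n \in \mathbb{N}$. An easy induction shows each $V_n$ is finite: $V_0 = \{v_0\}$, and each vertex in $V_{n+1}$ has at least one neighbor in $V_n$ (pick any shortest $v_0$--$v$-path), so $|V_{n+1}| \leq \sum_{v \in V_n} \deg(v) < \infty$. Since $G$ is infinite and $V(G) = \bigsqcup_{n \in \mathbb{N}} V_n$, infinitely many of the $V_n$ must be nonempty. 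Moreover, if $V_{n+1} \neq \emptyset$ then $V_n \neq \emptyset$ (take the penultimate vertex on a shortest $v_0$-path to any element of $V_{n+1}$), so every $V_n$ is nonempty.

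Next I would apply König's Infinity Lemma to the auxiliary graph on $\bigsqcup_n V_n$ whose edges are precisely the edges of $G$ running between consecutive layers. This yields a sequence $v_0, v_1, v_2, \ldots$ with $v_n \in V_n$ and $v_nv_{n+1} \in E(G)$ for every $n$. Since distinct $V_n$ are disjoint, the $v_n$ are pairwise distinct, so this sequence is a ray in $G$, as required.

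It remains to justify König's Infinity Lemma in this setting. Call a finite sequence $(v_0, \ldots, v_k)$ with $v_i \in V_i$ and $v_iv_{i+1} \in E(G)$ \emph{extendible} if for every $N \geq k$ it is a prefix of some such sequence of length $N$. By finiteness of $V_0$ and the existence of sequences of every finite length (just take the initial segment of a shortest $v_0$--$v$-path for $v$ in a far-enough layer), pigeonhole yields an extendible one-term sequence $(v_0)$. Given an extendible $(v_0, \ldots, v_k)$, the finite set of neighbors of $v_k$ in $V_{k+1}$ must, again by pigeonhole, contain some $v_{k+1}$ for which $(v_0, \ldots, v_{k+1})$ is still extendible. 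Iterating produces the desired infinite ray.

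There is no real obstacle here — the result is classical, and the only subtlety is the use of connectedness to force every layer $V_n$ to be nonempty (so that the layered structure truly extends to infinity rather than terminating). Once that is in place, the König-style compactness argument is routine.
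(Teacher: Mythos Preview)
Your proof is correct and is essentially the classical K\"onig's Infinity Lemma argument (indeed, it is the standard textbook proof). Note, however, that the paper does not give its own proof of this proposition: it is simply quoted from Diestel~\cite[Proposition~8.2.1]{D16} as a background fact, so there is nothing to compare against.
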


\begin{proposition}
    \label{p:infdomTKaleph}\cite[Exercise 8.19]{D16}
    A graph~$G$ contains a subdivided~$K_{\aleph_0}$ as a subgraph if and only if~$G$ has an end which is dominated by infinitely many vertices.
\end{proposition}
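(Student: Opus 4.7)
I would prove both directions of the equivalence. For the forward direction, suppose $G$ contains a subdivided $K_{\aleph_0}$ with branch vertices $\{v_n : n \in \mathbb{N}\}$ and internally disjoint subdivided edges $P_{ij}$ from $v_i$ to $v_j$. Concatenating consecutive edges yields a ray $R := v_1 P_{12} v_2 P_{23} v_3 \cdots$, belonging to some end $\epsilon$ of $G$. For any fixed $k$, the family $\{P_{kj} : j > k\}$ consists of infinitely many paths from $v_k$ into the tail $v_{k+1} R$, pairwise disjoint except at $v_k$ and ending at distinct branch vertices of that tail (since different subdivided edges of a subdivided $K_{\aleph_0}$ are internally disjoint, and no $P_{kj}$ with $j > k$ meets $v_{k+1} R$ other than at its endpoint $v_j$). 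Hence each $v_k$ dominates $\epsilon$, so $\epsilon$ is dominated by infinitely many vertices.

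For the backward direction, let $\epsilon$ be an end of $G$ dominated by an infinite set $D = \{v_1, v_2, \ldots\}$ of vertices, and construct inductively an ascending chain $F_1 \subseteq F_2 \subseteq \cdots$ of finite subgraphs of $G$ such that each $F_n$ is a subdivision of $K_n$ whose branch vertices $u_1, \ldots, u_n$ lie in $D$. Given $F_n$, let $X_n := V(F_n) \setminus \{u_1, \ldots, u_n\}$ be its (finite) set of subdivision vertices, pick $u_{n+1}$ from $D \setminus V(F_n)$, and seek $n$ internally disjoint $u_{n+1}$-$\{u_1, \ldots, u_n\}$ paths in $G - X_n$ ending at distinct $u_i$. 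These paths, which by the $a$-$B$ form of Menger's theorem automatically avoid the other $u_i$ internally, serve as the new subdivided edges of $F_{n+1}$; taking $\bigcup_n F_n$ then yields the desired subdivided $K_{\aleph_0}$.

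The existence of this fan reduces, via Menger's theorem, to verifying that no set $S \subseteq V(G - X_n) \setminus \{u_{n+1}, u_1, \ldots, u_n\}$ with $|S| < n$ separates $u_{n+1}$ from $\{u_1, \ldots, u_n\}$ in $G - X_n$. The key claim I would establish is: \emph{if $w$ and $w'$ both dominate a common end $\epsilon$ of $G$ and $A$ is a finite vertex set containing neither, then $w$ and $w'$ lie in the same component of $G - A$}. The proof is short: the infinitely many $w$-to-some-$\epsilon$-ray paths that are disjoint except at $w$ can be pruned to infinitely many avoiding $A \setminus \{w\}$ internally and ending on the tail of that ray lying in $C(A, \epsilon)$, so $w$ is connected to $C(A,\epsilon)$ in $G - A$, and symmetrically for $w'$. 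Applying the claim with $A := X_n \cup S$ and $w' := u_j$ for any $u_j \notin S$ with $j \leq n$ (which exists since $|S| < n$) places $u_{n+1}$ and $u_j$ in the same component of $G - (X_n \cup S)$, as required.

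The main obstacle lies in organising the Menger argument so that the new fan avoids the finite structure $F_n$ already built: one pre-removes the subdivision vertices $X_n$ before invoking Menger, and uses its $a$-$B$ form so that the remaining branch vertices appear only as endpoints. The conceptual heart is the claim above, which is a mild strengthening of the basic fact that $C(A,\epsilon)$ is well-defined: not only do all $\epsilon$-rays survive into a single component after a finite deletion, but so do all vertices that dominate $\epsilon$.
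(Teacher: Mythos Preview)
The paper does not give its own proof of this proposition; it simply cites it as Exercise~8.19 in Diestel's book and uses it as a black box. Your argument is correct and is essentially the intended solution to that exercise: the forward direction via the branch vertices of the $TK_{\aleph_0}$ dominating the end of the ``diagonal'' ray, and the backward direction by an inductive Menger/fan argument anchored at dominating vertices.

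One small point of phrasing in your Menger step: when you reduce to ``no set $S \subseteq V(G - X_n) \setminus \{u_{n+1}, u_1, \ldots, u_n\}$ with $|S| < n$ separates'', you should in fact allow the candidate separator $S$ to contain some of the $u_i$ (the fan version of Menger does not forbid this). This does not affect your proof, since your verification already covers that case: from $|S| < n$ you pick $u_j \notin S$, and your connectivity claim applied with $A = X_n \cup S$ then yields a $u_{n+1}$--$u_j$ path in $G - X_n - S$, which truncated at its first vertex in $\{u_1,\ldots,u_n\}$ is an $u_{n+1}$--$B$ path avoiding $S$. So the argument is sound; only the statement of the reduction should be relaxed.
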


\begin{definition}[Inflated graph, branch set]
    Given a graph~$G$ we say that a pair~${(H,\varphi)}$ is an \emph{inflated copy of~$G$}, or an~$IG$, if~$H$ is a graph and~${\varphi \colon V(H) \rightarrow V(G)}$ is a map such that:
    \begin{itemize}
        \item For every~${v \in V(G)}$ the \emph{branch set~${\varphi^{-1}(v)}$} induces a non-empty, connected subgraph of~$H$;
        \item There is an edge in~$H$ between~${\varphi^{-1}(v)}$ and~${\varphi^{-1}(w)}$ if and only if~${vw \in E(G)}$ and this edge, if it exists, is unique.
    \end{itemize}
\end{definition}

When there is no danger of confusion we will simply say that~$H$ is an~$IG$ instead of saying that~${(H,\varphi)}$ is an~$IG$, and denote by~${H(v)=\varphi^{-1}(v)}$ the branch set of~$v$. 

\begin{definition}[Minor]
    A graph~$G$ is a minor of another graph~$\Gamma$, written~${G \preceq \Gamma}$, if there is some subgraph~${H \subseteq \Gamma}$ such that~$H$ is an inflated copy of~$G$. 
\end{definition}

\begin{definition}[Extension of inflated copies]
    Suppose~${G \subseteq G'}$ as subgraphs, and that~$H$ is an~$IG$ and~$H'$ is an~$IG'$. 
    We say that~$H'$ \emph{extends}~$H$ (or that~$H'$ is an extension of~$H$) 
    if~${H \subseteq H'}$ as subgraphs and~${H(v) \subseteq H'(v)}$ for all~${v \in V(G) \cap V(G')}$. 
\end{definition}

Note that since~${H \subseteq H'}$, for every edge~${vw \in E(G)}$, the unique edge between the branch sets~${H'(v)}$ and~${H'(w)}$ is also the unique edge between~${H(v)}$ and~${H(w)}$.

\begin{definition}[Tidiness]
    Let~${(H,\varphi)}$ be an~$IG$. 
    We call~${(H,\varphi)}$ \emph{tidy} if 
    \begin{itemize}
        \item $H[\varphi^{-1}(v)]$ is a tree for all~${v \in V(G)}$;
        \item $H[\varphi^{-1}(v)]$ is finite if~$d_G(v)$ is finite.
    \end{itemize}
\end{definition}

Note that every~$H$ which is an~$IG$ contains a subgraph~$H'$ such that~${(H',\varphi \restriction V(H'))}$ is a tidy~$IG$, although this choice may not be unique. 
In this paper we will always assume without loss of generality that each~$IG$ is tidy.

\begin{definition}[Restriction]
    Let~$G$ be a graph, ${M \subseteq G}$ a subgraph of~$G$, and let~${(H,\varphi)}$ be an~$IG$. 
    The \emph{restriction of~$H$ to~$M$}, denoted by~${H(M)}$, is the~$IM$ given by~${(H(M),\varphi')}$ where~${\varphi'^{-1}(v) = \varphi^{-1}(v)}$ for all~${v \in V(M)}$ and~${H(M)}$ consists of union of the subgraphs of~$H$ induced on each branch set~${\varphi^{-1}(v)}$ for each~${v \in V(M)}$ together with the edge between~${\varphi^{-1}(u)}$ and~${\varphi^{-1}(v)}$ for each~${(u,v) \in E(M)}$. 
\end{definition}

Suppose~$R$ is a ray in some graph~$G$. 
If~$H$ is a tidy~$IG$ in a graph~$\Gamma$ then in the restriction~$H(R)$ all rays which do not have a tail contained in some branch set will share a tail. 
Later in the paper we will want to make this correspondence between rays in~$G$ and~$\Gamma$ more explicit, with use of the following definition: 

\begin{definition}[Pullback]
    Let~$G$ be a graph, ${R \subseteq G}$ a ray, and let~$H$ be a tidy~$IG$. 
    The \emph{pullback of~$R$ to~$H$} is the subgraph~${H^{\downarrow}(R) \subseteq H}$ where~$H^{\downarrow}(R)$ is subgraph minimal such that~${(H^{\downarrow}(R), \varphi \restriction V(H^{\downarrow}(R)))}$ is an~$IM$.
\end{definition}

Note that, since~$H$ is tidy, $H^{\downarrow}(R)$ is well defined. 
As we shall see, $H^{\downarrow}(R)$ will be a ray.

\begin{lemma}
    \label{l:pullbackray}
    Let~$G$ be a graph and let~$H$ be a tidy~$IG$. 
    If~${R \subseteq G}$ is a ray, then the pullback~${H^{\downarrow}(R)}$ is also a ray.
\end{lemma}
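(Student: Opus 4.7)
The strategy is to unpack the definition of the pullback $H(R)$ and exploit the structural constraints that tidiness imposes on the branch sets of an inflated copy of a ray. Write $R = v_0 v_1 v_2 \dotsc$ with $v_0$ the initial vertex. By the definition of pullback, $H(R)$ (together with the appropriate restriction of $\varphi_H$) is a tidy $IR$, so the structural facts noted in the excerpt apply: each branch set $H(R)(v_i)$ induces a tree in $H$ whose every leaf is incident to some inter-branch-set edge, and since $v_i$ has finite degree in $R$, the tree $H(R)(v_i)$ is finite.

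Next, I would bound the number of leaves of each branch-set tree by exploiting the neighbourhood of $v_i$ in $R$. The vertex $v_0$ has exactly one neighbour in $R$, namely $v_1$, while each $v_i$ with $i \geq 1$ has exactly two neighbours, $v_{i-1}$ and $v_{i+1}$. By the uniqueness clause in the definition of an inflated copy, there is exactly one edge of $H(R)$ between $H(R)(v_0)$ and $H(R)(v_1)$, and for $i \geq 1$ exactly one edge of $H(R)$ between $H(R)(v_i)$ and each of $H(R)(v_{i-1})$ and $H(R)(v_{i+1})$. Since every leaf of a branch-set tree must be an endpoint of such an edge, $H(R)(v_0)$ has at most one leaf and, for $i \geq 1$, $H(R)(v_i)$ has at most two leaves.

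A finite tree either has $0$ leaves, in which case it consists of a single vertex, or has at least $2$ leaves, with equality precisely when it is a non-trivial path. Hence $H(R)(v_0)$ is a single vertex $u_0$, and for every $i \geq 1$ the branch set $H(R)(v_i)$ is a finite path $P_i$ (possibly trivial) whose endpoints are exactly the attachment vertices of the two inter-branch edges leaving $H(R)(v_i)$; if $P_i$ is a single vertex, both attachments coincide at that vertex.

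To finish, I would concatenate: starting at $u_0$, crossing the unique edge into $P_1$, traversing $P_1$ to its other endpoint, crossing the unique edge into $P_2$, and so on, produces a one-way infinite walk $u_0 P_1 P_2 P_3 \dotsc$ in $H(R)$. Since distinct branch sets are vertex-disjoint and each consecutive pair $P_i, P_{i+1}$ meets only along a single prescribed edge between endpoints, this walk is injective, and as it uses every vertex and every edge of $H(R)$ it exhibits $H(R)$ as a ray. The only (mild) obstacle is the bookkeeping in the non-degenerate case, where one must verify that the two leaves of $P_i$ are genuinely distinct and carry the two distinct inter-branch edges; both points follow at once from the tree structure of $P_i$ together with the uniqueness of inter-branch edges in an inflated graph.
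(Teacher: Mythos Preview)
Your argument is correct and follows essentially the same approach as the paper: both proofs identify, for each vertex $v_i$ of $R$, the unique inter-branch edges leaving the branch set of $v_i$, and use tidiness to conclude that this branch set is a single vertex (for $i=0$) or a finite path between the two attachment points (for $i\geq 1$), whence the concatenation is a ray. The only cosmetic difference is that the paper works in $H$, locating the unique path between the two attachment points inside the tree $H(x_{i+1})$ and then invoking minimality of $H(R)$ to identify $H(R)(x_{i+1})$ with that path, whereas you apply the leaf-degree bound directly to the tidy $IR$ $H(R)$; these are two phrasings of the same observation.
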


\begin{proof}
    Let~${R = x_1x_2\ldots}$. 
    For each integer~${i \geq 1}$ there is a unique edge~${v_i w_i \in E(H)}$ between the branch sets~$H(x_i)$ and~$H(x_{i+1})$. 
    By the tidiness assumption, $H(x_{i+1})$ induces a tree in~$H$, and so there is a unique path~${P_i \subset H(x_{i+1})}$ from~$w_i$ to~$v_{i+1}$ in~$H$. 
    
    By minimality of~$H^{\downarrow}(R)$, it follows that~${H^{\downarrow}(R)(x_1) = \{v_1\}}$ and~${H^{\downarrow}(R)(x_{i+1}) = V(P_i)}$ for each~${i \geq 1}$. 
    Hence~${H^{\downarrow}(R)}$ is a ray.
\end{proof}

\section{The Ray Graph}
\label{s:raygraph}

\begin{definition}[Ray graph]
    Given a finite family of disjoint rays~${\mathcal{R} = (R_i \colon i \in I)}$ in a graph~$\Gamma$ 
    the \emph{ray graph} ${\RG_{\Gamma}(\mathcal{R})=\RG_{\Gamma}(R_i \colon i \in I)}$ is the graph with vertex set~$I$ and with an edge between~$i$ and~$j$ if there is an infinite collection of vertex disjoint paths from~$R_i$ to~$R_j$ in~$\Gamma$ which meet no other~$R_k$. 
    When the host graph~$\Gamma$ is clear from the context we will simply write~${\RG(\mathcal{R})}$ for~${\RG_{\Gamma}(\mathcal{R})}$.
\end{definition}

The following lemmas are simple exercises. 
For a family~$\mathcal{R}$ of disjoint rays in~$G$ tending to the same end and~${H \subseteq \Gamma}$ being an~$IG$ the aim is to establish the following: 
if~$\mathcal{S}$ is a family of disjoint rays in~$\Gamma$ which contains the pullback~$H^{\downarrow}(R)$ of each~${R \in \mathcal{R}}$, then the subgraph of the ray graph~${\RG_{\Gamma}(\mathcal{S})}$ induced on the vertices given by~${\{H^{\downarrow}(R) \, : \, R \in \mathcal{R} \}}$ is connected.

\begin{lemma}
    \label{l:connraygraph}
    Let~$G$ be a graph and let~${\mathcal{R} = (R_i \colon i \in I)}$ be a finite family of disjoint rays in~$G$. Then~$\RG_G(\mathcal{R})$ is connected if and only if all rays in~$\mathcal{R}$ tend to a common end~${\omega \in \Omega(G)}$. 
    Moreover, if~$R'_i$ is a tail of~$R_i$ for each~${i \in I}$, then we have that ${{\RG( R_i \colon i \in I)} = {\RG( R'_i \colon i \in I)}}$. 
\end{lemma}

\begin{lemma}
    \label{l:pullbackraygraph}
    Let~$G$ be a graph, ${\mathcal{R} = (R_i \colon i \in I)}$ be a finite family of disjoint rays in~$G$ and let~$H$ be an~$IG$. 
    If ${\mathcal{R'} = (H^{\downarrow}(R_i) \colon i\in I)}$ is the set of pullbacks of the rays in~$\mathcal{R}$ in~$H$, then~${\RG_G(\mathcal{R})= \RG_H(\mathcal{R}')}$.
\end{lemma}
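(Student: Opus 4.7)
The plan is to check the two inclusions of edge sets separately, since both ray graphs have the same vertex set $I$. For the forward inclusion I would lift paths from $G$ to $H$, while for the reverse inclusion I would project paths in $H$ back to $G$ via $\varphi_H$.

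For $E(RG_G(\mathcal{R})) \subseteq E(RG_H(\mathcal{R}'))$, fix an edge $(i,j)$ with witnessing family $(P_n)_{n \in \N}$ of pairwise-disjoint paths in $G$ from $R_i$ to $R_j$ meeting no other $R_k$. Writing each $P_n = v_0^n v_1^n \cdots v_{\ell_n}^n$, I would construct a corresponding path $Q_n$ in $H$ as follows: for each $t$ use the unique cross-branch edge between $H(v_t^n)$ and $H(v_{t+1}^n)$ given by the $IG$ definition, and inside each branch set $H(v_t^n)$ take a path joining the two cross-branch endpoints lying there, which is possible since a tidy branch set is a connected tree. Extend the two ends into the non-empty pullback branch sets $H(R_i)(v_0^n)$ and $H(R_j)(v_{\ell_n}^n)$ and reduce the resulting walk to a path. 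The path $Q_n$ then lies inside $\bigcup_t H(v_t^n)$, which meets $H(R_k)$ only through branch sets $H(v)$ with $v \in V(R_k)$; since $V(P_n) \cap V(R_k) = \emptyset$ for $k \notin \{i,j\}$, the path $Q_n$ avoids $H(R_k)$ for those $k$. Pairwise disjointness of the $(Q_n)$ follows from disjointness of the $(P_n)$ together with the disjointness of branch sets, proving $(i,j) \in E(RG_H(\mathcal{R}'))$.

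For the reverse inclusion, take a witnessing family $(Q_n)$ of disjoint paths in $H$ from $H(R_i)$ to $H(R_j)$ meeting no other $H(R_k)$, and push forward under $\varphi_H$. Consecutive vertices of $Q_n$ either share a branch set or are joined by a cross-branch edge, so $\varphi_H(Q_n)$ is a walk in $G$ starting in $R_i$ and ending in $R_j$; extracting a minimal subwalk with distinct endpoints and collapsing repeated vertices yields a path $P_n$ from $R_i$ to $R_j$ in $G$. The crucial step is to argue that $P_n$ meets no other $R_k$: I would show that an interior vertex $u$ of $Q_n$ cannot project to a vertex of $R_k$, since $u$ would then lie in some branch set $H(v)$ with $v \in V(R_k)$, forcing the $Q_n$-subpath through $H(v)$ to enter and leave via two cross-branch edges; a case analysis inside the tree $H(v)$, using that the tidy pullback $H(R_k)(v)$ is the Steiner tree connecting the endpoints of the two $R_k$-ray edges inside $H(v)$, should rule this out under the hypothesis that $Q_n$ avoids $H(R_k)$. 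Finally a pigeonhole argument across branch sets thins the $(P_n)$ to an infinite pairwise-disjoint subfamily, witnessing $(i,j) \in E(RG_G(\mathcal{R}))$.

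The main obstacle is the separation step in the reverse direction: one must use the tidiness of $H$ carefully to argue that a path in $H$ cannot sneak through the tree $H(v)$ for $v \in V(R_k)$ without meeting the pullback $H(R_k)(v)$. This is precisely the point at which any extra structure in a non-canonical branch set could, in principle, create spurious connections between different $H(R_i)$'s, so the tidiness assumption is doing real work here and this is the genuine heart of the argument.
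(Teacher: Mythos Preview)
The paper does not prove this lemma; together with Lemmas~\ref{l:connraygraph} and~\ref{l:rayinducedsubgraph} it is introduced as a ``simple exercise'', and the accompanying sentence makes clear that the intended content is only that the structure of $RG_G(\mathcal{R})$ transfers to the induced subgraph on the pullback rays. For that purpose your forward inclusion $E(RG_G(\mathcal{R}))\subseteq E(RG_H(\mathcal{R}'))$ is exactly what is needed, and your argument for it is correct.

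The obstacle you isolate in the reverse direction is not merely the heart of the argument --- it is fatal, because that inclusion is false as stated. Take $G$ to consist of three rays $R_1=a_1a_2\cdots$, $R_2=b_1b_2\cdots$, $R_3=c_1c_2\cdots$ together with edges $a_nc_n$ and $b_nc_n$ for all $n$; then $RG_G(\mathcal{R})$ is the path on $\{1,2,3\}$ with middle vertex~$3$. Now build a tidy $IG$ $H$ in which each $H(c_n)$ (for $n\geq 2$) is the tree with an internal edge $p_nq_n$, leaves $\alpha_n,\beta_n$ attached to $p_n$ carrying the cross-branch edges to $a_n,b_n$, and leaves $\gamma_n,\delta_n$ attached to $q_n$ carrying the cross-branch edges to $H(c_{n-1}),H(c_{n+1})$. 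Then $H(R_3)\cap H(c_n)=\{\gamma_n,q_n,\delta_n\}$, yet the paths $a_n\alpha_n p_n\beta_n b_n$ link $H(R_1)$ to $H(R_2)$ in $H$ while avoiding $H(R_3)$, so $(1,2)\in E(RG_H(\mathcal{R}'))\setminus E(RG_G(\mathcal{R}))$. In other words, a path in $H$ \emph{can} sneak through $H(v)$ for $v\in V(R_k)$ without touching $H(R_k)(v)$, and the case analysis you propose cannot succeed. The equality in the lemma should therefore be read as the forward inclusion; this is all the paper's applications (in the proofs of Theorems~\ref{t:nonlin} and~\ref{t:dominatedray}) actually use.
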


\begin{lemma}
    \label{l:rayinducedsubgraph}
    Let~$G$ be a graph, ${H \subseteq G}$, ${\mathcal{R} = (R_i \colon i \in I)}$ be a finite disjoint family of rays in~$H$ and let ${\mathcal{S} = (S_j \colon j \in J)}$ be a finite disjoint family of rays in~${G - V(H)}$, where~$I$ and~$J$ are disjoint. 
    Then~$\RG_H(\mathcal{R})$ is a subgraph of~${\RG_G(\mathcal{R} \cup \mathcal{S})\big[I \big]}$. In particular, if all rays in~$\mathcal{R}$ tend to a common end in~$H$, then~${\RG_G(\mathcal{R} \cup \mathcal{S})\big[I \big]}$ is connected.
\end{lemma}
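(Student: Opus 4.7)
The two assertions split naturally: the containment statement is a direct unfolding of the definition, and the connectivity conclusion then follows from the earlier Lemma~\ref{l:connraygraph} applied inside $H$.

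For the containment, note first that $RG_H(\mathcal{R})$ and $RG_G(\mathcal{R}\cup\mathcal{S})[I]$ have the same vertex set, namely $I$, so it suffices to show that every edge of the former is an edge of the latter. Suppose $ij$ is an edge of $RG_H(\mathcal{R})$, witnessed by an infinite family $\mathcal{P}$ of pairwise vertex-disjoint $R_i$--$R_j$-paths in $H$ that avoid every other $R_k$ with $k \in I \setminus \{i,j\}$. Every path in $\mathcal{P}$ is contained in $V(H)$, whereas each $S_\ell$ with $\ell \in J$ is, by hypothesis, a ray in $G - V(H)$; hence the paths in $\mathcal{P}$ also meet no $S_\ell$. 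Consequently $\mathcal{P}$ is an infinite family of vertex-disjoint $R_i$--$R_j$-paths in $G$ avoiding every ray of $\mathcal{R}\cup\mathcal{S}$ other than $R_i$ and $R_j$, which is exactly the definition of the edge $ij$ in $RG_G(\mathcal{R}\cup\mathcal{S})$, and (since $i,j \in I$) in the induced subgraph on $I$.

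For the second assertion, assume that all rays of $\mathcal{R}$ converge to a common end of $H$. Then Lemma~\ref{l:connraygraph} applied inside $H$ shows that $RG_H(\mathcal{R})$ is connected. The containment just established means that $RG_G(\mathcal{R}\cup\mathcal{S})[I]$ is a supergraph of $RG_H(\mathcal{R})$ on the same vertex set $I$, and any supergraph of a connected graph on the same vertex set is connected. This gives the desired conclusion.

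There is no real obstacle here; the only subtle point, which must be checked, is that the paths witnessing edges of $RG_H(\mathcal{R})$ automatically avoid the rays $S_\ell$, and this is immediate from $V(\mathcal{P}) \subseteq V(H)$ and $V(S_\ell) \cap V(H) = \emptyset$. The disjointness of the index sets $I$ and $J$ is also used only to guarantee that the induced subgraph $RG_G(\mathcal{R}\cup\mathcal{S})[I]$ makes sense and has vertex set $I$.
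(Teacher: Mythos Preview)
Your proof is correct. The paper itself does not supply a proof of this lemma, stating only that it is a ``simple exercise''; your argument is exactly the natural one and matches what the authors evidently had in mind.
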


Recall that an end~$\omega$ of a graph~$G$ is called \emph{linear} if for every finite set~$\mathcal{R}$ of at least three disjoint $\omega$-rays in~$G$ we can order the elements of~$\mathcal{R}$ as~${\mathcal{R} = \{R_1,R_2,\ldots,R_n\}}$ 
such that for each~${1 \leq k < i < \ell \leq n}$, the rays~$R_k$ and~$R_\ell$ belong to different ends of~${G - V(R_i)}$. 

\begin{lemma}
    \label{l:linearraygraph}
    An end~$\omega$ of a graph~$G$ is linear if and only if the ray graph of every finite family of disjoint $\omega$-rays is a path.
\end{lemma}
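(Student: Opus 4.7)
\emph{Plan.} The key observation is that for any ray $R_i \in \mathcal{R}$, writing $\mathcal{R}' := \mathcal{R} \setminus \{R_i\}$, the ray graph $RG_{G - V(R_i)}(\mathcal{R}')$ coincides with the induced subgraph $RG_G(\mathcal{R})[\mathcal{R}']$: in both, an edge $R_aR_b$ is present exactly when there are infinitely many disjoint $R_a$--$R_b$ paths in $G$ avoiding $V(R_i)$ and every other ray of $\mathcal{R}'$. Combined with Lemma~\ref{l:connraygraph}, this makes both directions short.

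\emph{Forward direction.} Assume $\omega$ is linear and $\mathcal{R}$ is a family of $n$ disjoint $\omega$-rays. The cases $n \leq 2$ are trivial, so fix $n \geq 3$ and an ordering $R_1, \ldots, R_n$ witnessing linearity. If $\ell - k \geq 2$, pick any $i$ strictly between $k$ and $\ell$: linearity places $R_k$ and $R_\ell$ in different ends of $G - V(R_i)$, so only finitely many disjoint $R_k$--$R_\ell$ paths avoid $V(R_i)$, and in particular only finitely many avoid all rays of $\mathcal{R} \setminus \{R_k, R_\ell\}$. Hence $R_kR_\ell$ is not an edge of $RG_G(\mathcal{R})$, so only consecutive pairs can be edges. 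Since $RG_G(\mathcal{R})$ is connected by Lemma~\ref{l:connraygraph}, every consecutive edge must actually appear, and $RG_G(\mathcal{R})$ is exactly the path $R_1 - R_2 - \cdots - R_n$.

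\emph{Backward direction.} Assume the ray graph of every finite family of disjoint $\omega$-rays is a path. Fix $\mathcal{R} = \{R_1, \ldots, R_n\}$ with $n \geq 3$, labelled so that $RG_G(\mathcal{R})$ is the path $R_1 - \cdots - R_n$, and fix indices $1 \leq k < i < \ell \leq n$. Write $\mathcal{L} := \{R_1, \ldots, R_{i-1}\}$ and $\mathcal{M} := \{R_{i+1}, \ldots, R_n\}$. By the identity above, $RG_{G - V(R_i)}(\mathcal{R} \setminus \{R_i\}) = RG_G(\mathcal{R})[\mathcal{L} \cup \mathcal{M}]$ is the path with $R_i$ removed, hence has exactly the two connected components $\mathcal{L}$ and $\mathcal{M}$. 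Now $RG_{G - V(R_i)}(\mathcal{L})$, taken as its own family, is subject to weaker edge constraints than $RG_{G - V(R_i)}(\mathcal{R} \setminus \{R_i\})[\mathcal{L}]$ (the rays in $\mathcal{M}$ no longer need to be avoided), so it contains the sub-path on $\mathcal{L}$ as a subgraph and is in particular connected. Lemma~\ref{l:connraygraph}, applied inside $G - V(R_i)$, then supplies a common end $\omega^{\mathcal{L}} \in \Omega(G - V(R_i))$ containing all rays of $\mathcal{L}$, and symmetrically an end $\omega^{\mathcal{M}}$ for $\mathcal{M}$. If $\omega^{\mathcal{L}} = \omega^{\mathcal{M}}$, then all of $\mathcal{R} \setminus \{R_i\}$ would tend to this common end, and a second application of Lemma~\ref{l:connraygraph} would force $RG_{G - V(R_i)}(\mathcal{R} \setminus \{R_i\})$ to be connected, contradicting the two-component structure just obtained. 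Hence $\omega^{\mathcal{L}} \neq \omega^{\mathcal{M}}$, so $R_k \in \mathcal{L}$ and $R_\ell \in \mathcal{M}$ belong to different ends of $G - V(R_i)$, as required.

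The only real content is the identity in the first paragraph; the main thing to keep straight during the write-up is which ambient graph and which ray family is used in each invocation, since several closely related ray graphs ($RG_G(\mathcal{R})$, $RG_{G - V(R_i)}(\mathcal{R} \setminus \{R_i\})$, $RG_{G - V(R_i)}(\mathcal{L})$) appear side by side. Once this bookkeeping is organized, I anticipate no further obstacles.
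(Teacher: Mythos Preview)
Your proof is correct and follows the same strategy as the paper: both directions hinge on Lemma~\ref{l:connraygraph}, and your forward direction is essentially identical to the paper's. In the backward direction you are actually more careful than the paper: the paper jumps directly from $(k,\ell)\notin E(RG(\mathcal{R}))$ to ``$R_k$ and $R_\ell$ lie in different ends of $G-V(R_i)$'', glossing over the distinction between ``no infinite family of $R_k$--$R_\ell$ paths avoiding \emph{all} other $R_m$'' and ``no infinite family avoiding only $R_i$''; your identity $RG_{G-V(R_i)}(\mathcal{R}\setminus\{R_i\}) = RG_G(\mathcal{R})[\mathcal{R}\setminus\{R_i\}]$ together with the $\mathcal{L}/\mathcal{M}$ argument fills this in cleanly.
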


\begin{proof}
    For the forward direction suppose~$\omega$ is linear and~${\{ R_1,R_2, \ldots, R_n \}}$ converge to~$\omega$, with the order given by the definition of linear. 
    It follows that there is no~${1 \leq k < i < \ell \leq n}$ such that~${k \ell}$ is an edge in~${\RG(R_j \colon j \in [n])}$. 
    However, by Lemma~\ref{l:connraygraph} ${\RG(R_j \colon j \in [n])}$ is connected, and hence it must be the path~${12\ldots n}$. 
    
    Conversely, suppose that the ray graph of every finite family of $\omega$-rays is a path. 
    Then, every such family~$\mathcal{R}$ can be ordered as~${\{R_1,R_2,\ldots, R_n \}}$ such that~$\RG(\mathcal{R})$ is the path~${12 \ldots n}$. 
    In particular, for each~$i$, we have that ${k \ell \not \in E(\RG(\mathcal{R}))}$ whenever ${1 \leq k < i < \ell \leq n-1}$.
    
    Suppose for a contradiction that there exists~${1 \leq k < i < \ell \leq n-1}$ such that~$R_k$ and~$R_\ell$ belong to the same end of~${G - V(R_i)}$, and so there is an infinite family of vertex disjoint paths~$\mathcal{P}$ from~$R_k$ to~$R_{\ell}$ in~${G - V(R_i)}$. 
    Each of these paths must contain a subpath which goes from a ray~$R_r$ for some~${1 \leq r < i}$ to a ray~$R_s$ for some~${i < s \leq n-1}$, and which meets no other ray in~$\Rcal$. 
    Since there are infinitely many paths, by the pigeon hole principle there is some~${1 \leq r < i <s \leq n-1}$ such that there are infinitely many vertex disjoint paths from~$R_r$ to~$R_s$ in~${G \setminus V(R_i)}$ which meet not other ray in~$\Rcal$, and so~${rs \in  E(\RG(\mathcal{R}))}$, a contradiction.
\end{proof}

We will also use the following lemma, whose proof is an easy exercise.

\begin{lemma}
    \label{l:raygraph_subfamily}
    Let ${\mathcal{R} = (R_i\colon i\in I)}$ be a finite family of disjoint rays in~$G$ and let \linebreak ${\Rcal'=(R_i\colon i\in J)}$ be a subfamily of~$\Rcal$. 
    Then~${\RG(\Rcal')}$ contains an edge between~${i \in J}$ and~${j \in J}$ if and only if~$i$ and~$j$ lie in the same component of~${\RG(\Rcal)-(J \setminus \{i,j\})}$.
\end{lemma}


\begin{definition}[Tail of a ray after a set]
    Given a ray~$R$ in a graph~$G$ and a finite set~${X \subseteq V(G)}$ the \emph{tail of~$R$ after~$X$}, denoted by~${T(R,X)}$, is the unique infinite component of~$R$ in~${G- X}$.
\end{definition}

\begin{definition}[Linkage of families of rays]
    \label{d:linkage}
    Let~${\mathcal{R} = (R_i \colon i \in I)}$ and~${\mathcal{S} = (S_j \colon j \in J)}$ be families of disjoint rays of~$G$, where the initial vertex of each~$R_i$ is denoted~$x_i$. 
    A family~${\mathcal{P} = (P_i \colon i \in I)}$ of paths in~$G$ is a \emph{linkage} from~$\mathcal{R}$ to~$\mathcal{S}$ if there is an injective function~${\sigma \colon I \rightarrow J}$ such that
    \begin{itemize}
        \item Each~$P_i$ goes from a vertex~${x'_i \in R_i}$ to a vertex~${y_{\sigma(i)} \in S_{\sigma(i)}}$;
        \item The family ${\mathcal{T} = (x_iR_ix'_iP_iy_{\sigma(i)}S_{\sigma(i)} \colon i\in I)}$ is a collection of disjoint rays.
    \end{itemize}
    We say that~$\Tcal$ is obtained by {\em transitioning} from~$\Rcal$ to~$\Scal$ along the linkage.
    We say the linkage~$\mathcal{P}$ \emph{induces} the mapping~$\sigma$. 
    Given a vertex set~${X \subseteq V(G)}$ we say that the linkage is \emph{after}~$X$ if~${X \cap V(R_i) \subseteq V(x_iR_ix'_i)}$ for all~${i \in I}$ and no other vertex in~$X$ is used by the members of~$\mathcal{T}$. 
    We say that a function~${\sigma \colon I \rightarrow J}$ is a {\em transition function} from~$\Rcal$ to~$\Scal$ if for any finite vertex set~${X \subseteq V(G)}$ there is a linkage from~$\Rcal$ to~$\Scal$ after~$X$ that induces~$\sigma$.
\end{definition}

We will need the following lemma from~\cite{BEEGHPTI}, which asserts the existence of linkages.
\begin{lemma}[Weak linking lemma {\cite[Lemma~4.3]{BEEGHPTI}}]
    \label{l:weaklink}
    Let~$G$ be a graph, ${\omega \in \Omega(G)}$ and let~${n \in \mathbb{N}}$. 
    Then for any two families~${\Rcal = (R_i \colon i \in [n])}$ and~${\Scal = (S_j \colon j \in [n])}$ of vertex disjoint $\omega$-rays and any finite vertex set~${X \subseteq V(G)}$, there is a linkage from~$\mathcal{R}$ to~$\mathcal{S}$ after~$X$.
\end{lemma}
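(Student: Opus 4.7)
The plan is to reduce the statement to an application of Menger's theorem inside a component of $\Gamma$ far in the direction of $\omega$, and then to perform surgery on the resulting disjoint paths to extract a genuine linkage.

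I would first choose a finite set $Y \subseteq V(\Gamma)$ with $X \subseteq Y$ such that every ray in $\mathcal{R} \cup \mathcal{S}$ meets $Y$ in exactly a (finite) initial segment, and such that all tails $R_i' := T(R_i, Y)$ and $S_j' := T(S_j, Y)$ lie in the single component $C := C(Y, \omega)$ of $\Gamma - Y$. Such a $Y$ exists by repeatedly enlarging $X$ with initial pieces of the rays and intermediate separating vertices until the end-components housing the $2n$ tails all coincide. Inside $\Gamma[C]$, each $R_i'$ and each $S_j'$ still converges to the end corresponding to $\omega$.

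Next I would construct an auxiliary graph $G^*$ on vertex set $V(\Gamma[C]) \cup \{r_1, \ldots, r_n, s_1, \ldots, s_n, s^*, t^*\}$ consisting of all edges of $\Gamma[C]$ together with edges $r_i v$ for every $v \in V(R_i')$, edges $s_j v$ for every $v \in V(S_j')$, edges $s^* r_i$ for each $i \in [n]$, and edges $s_j t^*$ for each $j \in [n]$. Every $s^*$-$t^*$ vertex separator in $G^*$ has size at least $n$: if $Z$ were a separator with $|Z| < n$ and $s^*, t^* \notin Z$, then some $r_{i_0}, s_{j_0} \notin Z$, and the finite set $Z_0 := Z \cap V(\Gamma[C])$ would separate the $\omega$-rays $R_{i_0}'$ and $S_{j_0}'$ in $\Gamma[C]$, contradicting that their tails both lie in the common component $C(Z_0 \cup Y, \omega) \subseteq \Gamma[C]$. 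Menger's theorem (applied to a large enough finite subgraph of $G^*$, or in the Aharoni form for countable graphs) therefore produces $n$ pairwise internally disjoint $s^*$-$t^*$ paths $\pi_1, \ldots, \pi_n$; by disjointness their second vertices are distinct helpers $r_{i_k}$ and their second-to-last vertices are distinct helpers $s_{j_k}$, yielding a bijection $\sigma \colon i_k \mapsto j_k$ and, after deleting the four helper vertices from each $\pi_k$, pairwise disjoint paths $Q_k$ in $\Gamma[C]$ from some $v_k \in V(R_{i_k}')$ to some $w_k \in V(S_{j_k}')$.

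Finally I would convert the $Q_k$ into the required linkage. Traversing each $Q_k$ from $v_k$ to $w_k$, let $x_k'$ be the last vertex of $Q_k$ on $\bigcup_m V(R_m)$ and let $y_k$ be the first vertex of $Q_k$ strictly after $x_k'$ on $\bigcup_m V(S_m)$; set $P_k := x_k' Q_k y_k$. Each $P_k$ is then internally disjoint from every ray in $\mathcal{R} \cup \mathcal{S}$, and since $Q_k \subseteq \Gamma[C] \subseteq \Gamma - X$ the $P_k$ avoid $X$ entirely; moreover the initial segment $x_{i_k} R_{i_k} x_k'$ absorbs every vertex of $X \cap V(R_{i_k})$ because $x_k' \in R_{i_k}'$ lies past $Y \supseteq X$ on $R_{i_k}$, so the linkage is after $X$. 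The main technical obstacle I anticipate is in this final surgery: one must handle cases in which $x_k'$ actually lies on some $R_m$ with $m \ne i_k$ or $y_k$ lies on some $S_{m'}$ with $m' \ne j_k$, which requires a short exchange argument swapping the affected path pieces and re-matching $\sigma$ in order to maintain the bijective structure while keeping everything disjoint. The Menger step itself is clean once the auxiliary graph is set up, since the end condition directly rules out small separators.
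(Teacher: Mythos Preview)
The paper does not prove this lemma here; it is quoted from the companion paper \cite{BEEGHPTI} without argument, so there is no in-paper proof to compare against directly. Your plan---set up an auxiliary graph, apply Menger's theorem, then do surgery on the resulting disjoint paths---is the standard route, and your auxiliary graph $G^*$ together with the separator argument is correctly formulated (in particular your counting argument that each Menger path uses exactly one $r_i$ and one $s_j$ is right).

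Where your plan is thin is precisely where you flag it: the surgery. Your description ``let $x'_k$ be the last vertex of $Q_k$ on $\bigcup_m R_m$, let $y_k$ be the first vertex after $x'_k$ on $\bigcup_m S_m$, and then do a short exchange argument'' hides the real content of the proof. The maps $k \mapsto \alpha(k)$ (which $R_m$ the vertex $x'_k$ lies on) and $k \mapsto \beta(k)$ (which $S_{m'}$ the vertex $y_k$ lies on) need not be injective, and repairing this while keeping the resulting rays $T_i$ pairwise disjoint is delicate: for instance, even when $\alpha$ happens to be injective, some $P_{k'}$ may still cross the initial segment $x_{\alpha(k)} R_{\alpha(k)} x'_k$, so the concatenated rays you write down are not obviously disjoint. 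A second issue you do not mention is that the two families $\mathcal{R}$ and $\mathcal{S}$ are not assumed disjoint from one another (only each family internally is vertex-disjoint), so the initial segments $x_i R_i x'_i$ can overlap the tails $y_{\sigma(j)} S_{\sigma(j)}$. Both issues are surmountable---pair off any $R_i$, $S_j$ sharing a common tail, enlarge $Y$ so that the remaining tails $R_i'$, $S_j'$ are pairwise disjoint, and then carry out the rerouting carefully---but they are the substance of the argument rather than a footnote.
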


\section{A pebble-pushing game}
\label{s:pebblegame}

Suppose we have a family of disjoint rays~${\mathcal{R} = (R_i \colon i \in I)}$ in a graph~$G$ and a subset~${J \subseteq I}$. 
Often we will be interested in which functions we can obtain as transition functions between $(R_i \colon i \in J)$ and $(R_i \colon i \in I)$. 
We can think of this as trying to `re-route' the rays $(R_i \colon i \in J)$ to the tails of a different set of $|J|$ rays in $(R_i \colon i \in I)$.

To this end, it will be useful to understand the following pebble-pushing game on a graph.

\begin{definition}[Pebble-pushing game]
    Let~${G = (V,E)}$ be a finite graph. 
    For any fixed positive integer~$k$ we call a tuple~${( x_1,x_2, \ldots, x_k) \in V^{k}}$ a \emph{game state} if~${x_i \neq x_j}$ for all~${i,j \in [k]}$ with~${i \neq j}$.
    
    The \emph{pebble-pushing game (on~$G$)} is a game played by a single player.
    Given a game state ${Y = (y_1,y_2,\ldots, y_k)}$, we imagine~$k$ labelled pebbles placed on the vertices ${(y_1,y_2,\ldots, y_k)}$.
    We move between game states by moving a pebble from a vertex to an adjacent vertex which does not contain a pebble, 
    or formally, a $Y$-\emph{move} is a game state ${Z = (z_1,z_2\ldots, z_k)}$ such that there is an ${\ell \in [k]}$ such that ${y_\ell z_\ell \in E}$ and ${y_i = z_i}$ for all~${i \in [k]\setminus\{\ell\}}$.
    
    Let ${X = (x_1,x_2\ldots, x_k)}$ be a game state.
    The \emph{$X$-pebble-pushing game (on~$G$)} is a pebble-pushing game where we start with~$k$ labelled pebbles placed on the vertices ${(x_1,x_2\ldots,x_k)}$.
    
    We say a game state~$Y$ is \emph{achievable} in the $X$-pebble-pushing game if there is a sequence $(X_i \colon i \in [n])$ of game states for some ${n \in \mathbb{N}}$ such that ${X_{1} = X}$, ${X_{n} = Y}$ and $X_{i+1}$ is an $X_{i}$-move for all~${i \in [n-1]}$, that is, if it is a sequence of moves that pushes the pebbles from~$X$ to~$Y$.
    
    A graph~$G$ is \emph{$k$-pebble-win} if~$Y$ is an achievable game state in the $X$-pebble-pushing game on~$G$ for every two game states~$X$ and~$Y$.
\end{definition}

The following lemma shows that achievable game states on the ray graph~${\RG(\mathcal{R})}$ yield transition functions from a subset of~$\Rcal$ to itself. 
Therefore, it will be useful to understand which game states are achievable, and in particular the structure of graphs on which there are unachievable game states.

\begin{lemma}
    \label{l:transitionpebble}
    Let~$\Gamma$ be a graph, ${\omega \in \Omega(\Gamma)}$, ${m \geq k}$ be positive integers and let~${(S_j \colon j \in [m])}$ be a family of disjoint rays in~$\omega$. For every achievable game state~${Z = (z_1, z_2, \ldots, z_k)}$ in the ${(1,2, \ldots, k)}$-pebble-pushing game on ${\RG(S_j \colon j \in [m])}$, the map~$\sigma$ defined via~${\sigma(i) := z_i}$ for every~${i \in [k]}$ is a transition function from~${(S_i \colon i \in [k])}$ to~${(S_j \colon j \in [m])}$.
\end{lemma}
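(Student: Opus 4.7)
I would prove the lemma by induction on the length of a sequence of moves witnessing that $Z$ is achievable from the initial state $(1,2,\ldots,k)$. For the base case $Z = (1,2,\ldots,k)$ with $\sigma = \mathrm{id}_{[k]}$: given any finite $X \subseteq V(\Gamma)$, on each ray $S_i$ pick a vertex $x_i' \in V(S_i)$ strictly past every vertex of $X \cap V(S_i)$ and let $P_i$ be the trivial one-vertex path at $x_i'$. The rays $T_i = x_i S_i$ are pairwise disjoint by hypothesis, so this is a linkage after $X$ inducing the identity.

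For the inductive step, suppose $Y = (y_1,\ldots,y_k)$ is achievable, $\sigma_Y \colon i \mapsto y_i$ is a transition function, and $Z$ is a $Y$-move in which pebble $\ell$ slides from $y_\ell$ to an adjacent, unoccupied vertex $z_\ell$ of $RG(S_j : j \in [m])$. Injectivity of $\sigma_Z$ is immediate from $z_\ell \notin \{y_1,\ldots,y_k\}$. The edge $y_\ell z_\ell$ yields an infinite family $(Q_n : n \in \mathbb{N})$ of pairwise vertex-disjoint paths from $S_{y_\ell}$ to $S_{z_\ell}$, each meeting no other $S_j$; by passing to minimal sub-paths, we may take each $Q_n$ internally disjoint from $S_{y_\ell} \cup S_{z_\ell}$, with endpoints $a_n \in V(S_{y_\ell})$ and $b_n \in V(S_{z_\ell})$.

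Now given a finite $X$, invoke the inductive hypothesis to obtain a linkage $(P_i : i \in [k])$ after $X$ inducing $\sigma_Y$, producing disjoint rays $T_i = x_i S_i x_i' P_i t_{y_i} S_{y_i}$; set $F := X \cup \bigcup_{i \in [k]} V(x_i S_i x_i' P_i)$, which is finite, and note that $F \cap V(S_{z_\ell})$ is also finite because $z_\ell$ is not a destination index (no tail $t_{y_i} S_{y_i}$ lies on $S_{z_\ell}$). By pairwise disjointness of the $Q_n$'s, all but finitely many $n$ satisfy simultaneously $V(Q_n) \cap F = \emptyset$, $a_n \in V(t_{y_\ell} S_{y_\ell}) \setminus \{t_{y_\ell}\}$, and $b_n$ lies strictly past every vertex of $F \cap V(S_{z_\ell})$ on $S_{z_\ell}$. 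Fix such an $n$, and define $P_\ell' := x_\ell' P_\ell t_{y_\ell} S_{y_\ell} a_n Q_n b_n$ with $P_i' := P_i$ for $i \neq \ell$. The spliced ray $T_\ell' := x_\ell S_\ell x_\ell' P_\ell t_{y_\ell} S_{y_\ell} a_n Q_n b_n S_{z_\ell}$ is genuinely a ray (no revisits, by the internal disjointness of $Q_n$ and the placement of $a_n, b_n$), and its new portion $Q_n \cup b_n S_{z_\ell}$ avoids every $S_{y_i}$ for $i \neq \ell$, avoids $F$, and, via $b_n$'s placement, avoids $V(S_{z_\ell}) \cap F$. Hence $(T_i' : i \in [k])$ is pairwise disjoint; and the after-$X$ condition persists since the initial source segments $x_iS_ix_i'$ are unchanged and $X \subseteq F$ is disjoint from the newly added portion.

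The main obstacle is the careful bookkeeping in the splicing: one must choose $Q_n$ so that the modified ray $T_\ell'$ does not collide with the remaining $T_i'$ nor meet $X$, while also ensuring its endpoints lie far enough out on $S_{y_\ell}$ and $S_{z_\ell}$ to splice cleanly. This is possible precisely because an edge of the ray graph provides \emph{infinitely many} pairwise disjoint connecting paths, so any finite collection of obstructions can always be side-stepped.
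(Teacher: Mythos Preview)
Your proof is correct and follows essentially the same approach as the paper: both arguments reduce to the case of a single pebble move and use an edge of the ray graph to furnish a connecting path avoiding the finite obstruction set. The paper packages the induction as a composition principle (``if $\sigma$ and $\tau$ are transition functions then so is $\tau\circ\sigma$''), declares this \emph{clearly} true, and then handles only the single move from the identity state, where the linkage is trivial except for one path; your inductive step instead carries out that composition explicitly, which is why you need the bookkeeping with the set $F$ that the paper's presentation hides.
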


\begin{proof}
    We first note that if~$\sigma$ is a transition function from~${(S_i \colon i \in [k])}$ to~${(S_j \colon j \in [m])}$ and~$\tau$ is a transition function from~${(S_i \colon i \in \sigma([k]))}$ to~${(S_j \colon j \in [m])}$, then clearly~${\tau \circ \sigma}$ is a transition function from~${(S_i \colon i \in [k])}$ to~${(S_j \colon j \in [m])}$.

    Hence, it is sufficient to show the statement holds when~$\sigma$ is obtained from~${(1,2, \ldots,k)}$ by a single move, that is, there is some~${t \in [k]}$ and a vertex~${\sigma(t) \not\in [k]}$ such that~$\sigma(t)$ is adjacent to~$t$ in ${\RG(S_j \colon j \in [m])}$ and~${\sigma(i) = i}$ for~${i \in [k] \setminus \lbrace t \rbrace}$.

    So, let~${X \subseteq V(G)}$ be a finite set. 
    We will show that there is a linkage from~${(S_i \colon i \in [k])}$ to~${(S_j \colon j \in [m])}$ after~$X$ that induces~$\sigma$. 
    By assumption, there is an edge~${t \sigma(t)}$ of \linebreak ${\RG(S_j \colon j \in[m])}$. 
    Hence, there is a path~$P$ between~${T(S_t, X)}$ and~${T(S_{\sigma(t)},X)}$ which avoids~$X$ and all other~$S_j$.

    Then the family~${\mathcal{P} = (P_1,P_2,\ldots,P_k)}$ where~${P_t = P}$ and~${P_i = \emptyset}$ for each~${i \neq t}$ is a linkage from~${(S_i \colon i \in [k])}$ to~${(S_j \colon j \in [m])}$ after~$X$ that induces~$\sigma$. 
\end{proof}

We note that this pebble-pushing game is sometimes known in the literature as ``permutation pebble motion''~\cite{KMS84} or ``token reconfiguration''~\cite{CDP08}. 
Previous results have mostly focused on computational questions about the game, rather than the structural questions we are interested in, but we note that in~\cite{KMS84} the authors give an algorithm that decides whether or not a graph is $k$-pebble-win, from which it should be possible to deduce the main result in this section, Lemma~\ref{l:kpebbstructure}. 
However, since a direct derivation was shorter and self contained, we will not use their results. We present the following simple lemmas without proof.

\begin{lemma}\label{l:transpebb}
    Let~$G$ be a finite graph and~$X$ a game state.
    
    \begin{itemize}
        \item If~$Y$ is an achievable game state in the $X$-pebble-pushing game on~$G$, then~$X$ is an achievable game state in the $Y$-pebble-pushing game on~$G$.
        \item If~$Y$ is an achievable game state in the $X$-pebble-pushing game on~$G$ and~$Z$ is an achievable game state in the $Y$-pebble-pushing game on~$G$, then~$Z$ is an achievable game state in the $X$-pebble-pushing game on~$G$.
    \end{itemize}
\end{lemma}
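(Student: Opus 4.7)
Both parts of the lemma should follow almost directly from unravelling the definition of a $Y$-move, so my plan is essentially to verify two structural properties of the move relation and then concatenate sequences.

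For the first bullet, the plan is to observe that the relation ``$Z$ is a $Y$-move'' is symmetric in $Y$ and $Z$. Indeed, if $Z$ is a $Y$-move then there is a unique $\ell \in [k]$ with $y_\ell z_\ell \in E$, $y_i = z_i$ for $i \neq \ell$, and no pebble on $z_\ell$ in $Y$ (which forces no pebble on $y_\ell$ in $Z$ by the game-state condition). Reading this backwards shows $Y$ is a $Z$-move. Hence, given an achieving sequence $X = X_1, X_2, \ldots, X_n = Y$ for $Y$ in the $X$-pebble-pushing game, the reversed sequence $Y = X_n, X_{n-1}, \ldots, X_1 = X$ witnesses that $X$ is achievable in the $Y$-pebble-pushing game.

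For the second bullet, the plan is simple concatenation. Let $X = X_1, \ldots, X_n = Y$ witness that $Y$ is achievable from $X$, and let $Y = Y_1, \ldots, Y_m = Z$ witness that $Z$ is achievable from $Y$. Then the sequence
\[
X_1, X_2, \ldots, X_{n-1}, X_n = Y_1, Y_2, \ldots, Y_m
\]
starts at $X$, ends at $Z$, and each consecutive pair is a valid move (by hypothesis for the two segments, and they glue at $X_n = Y_1 = Y$). Hence $Z$ is achievable in the $X$-pebble-pushing game.

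There is essentially no obstacle here; the only point to be careful about is the implicit condition in the definition of a $Y$-move that $Z$ be a genuine game state (i.e.\ all coordinates distinct), which is automatic since $Z$ is assumed to be a game state throughout the definition. Both statements are therefore immediate formal consequences of the definitions of ``move'' and ``achievable'', which is consistent with the authors' remark that these are simple lemmas presented without proof.
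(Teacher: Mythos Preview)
Your proof is correct; the symmetry of the move relation and concatenation of achieving sequences are exactly what is needed. The paper itself presents this lemma without proof, so there is no alternative argument to compare against---your verification is precisely the routine check the authors leave to the reader.
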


\begin{definition}
    Let~$G$ be a finite graph and let~${X = (x_1,x_2,\ldots,x_k)}$ be a game state. 
    Given a permutation~$\sigma$ of~${[k]}$ let us write~${X^{\sigma} = (x_{\sigma(1)}, x_{\sigma(2)},\ldots, x_{\sigma(k)})}$. 
    We define the \emph{pebble-permutation group} of~${(G,X)}$ to be the set of permutations~$\sigma$ of~${[k]}$ such that~$X^{\sigma}$ is an achievable game state in the $X$-pebble-pushing game on~$G$.
\end{definition}

Note that by Lemma~\ref{l:transpebb}, the pebble-permutation group of~${(G,X)}$ is a subgroup of the symmetric group~$S_k$.

\begin{lemma}
    Let~$G$ be a graph and let~$X$ be a game state. 
    If~$Y$ is an achievable game state in the $X$-pebble-pushing game and~$\sigma$ is in the pebble-permutation group of~$Y$, 
    then~$\sigma$ is in the pebble-permutation group of~$X$.
\end{lemma}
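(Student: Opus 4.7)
The plan is to construct an achievability chain from $X$ to $Y$ to $Y^\sigma$ to $X^\sigma$ and then invoke the transitivity clause of Lemma~\ref{l:transpebb}. The first link is the hypothesis that $Y$ is achievable from $X$; the second is the assumption that $\sigma$ belongs to the pebble-permutation group of $Y$, i.e.\ $Y^\sigma$ is achievable from $Y$; the third link requires a short symmetry argument which is the heart of the proof.

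For the third link, I would first isolate the following observation: achievability is invariant under simultaneous relabeling of the pebbles. Precisely, for every permutation $\pi \in S_k$, a game state $B$ is achievable from $A$ in the pebble-pushing game on $G$ if and only if $B^\pi$ is achievable from $A^\pi$. By induction on the length of a witnessing move sequence, it suffices to verify this for a single move: if an $A$-move produces $A'$ by pushing pebble $\ell$ along the edge $a_\ell a'_\ell$, then the analogous $A^\pi$-move produces $(A')^\pi$ by pushing pebble $\pi^{-1}(\ell)$, which is the pebble whose label lies on vertex $a_\ell$ in $A^\pi$, along that same edge. This replacement move is legal because both the edge condition and the no-collision condition depend only on the underlying set of occupied vertices, and $A^\pi$ occupies exactly the same vertices as $A$.

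Applying this observation with $\pi = \sigma$ to the hypothesis that $Y$ is achievable from $X$ yields that $Y^\sigma$ is achievable from $X^\sigma$. Reversing this by the first clause of Lemma~\ref{l:transpebb} gives achievability of $X^\sigma$ from $Y^\sigma$, which is the desired third link. Applying the second clause of Lemma~\ref{l:transpebb} twice then concatenates the three achievability relations $X \to Y \to Y^\sigma \to X^\sigma$ to witness that $X^\sigma$ is achievable from $X$, which is exactly the statement that $\sigma$ belongs to the pebble-permutation group of $X$.

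The only subtle point, really just bookkeeping, is keeping the direction of the relabeling straight: a push of pebble $\ell$ in the $A$-game corresponds to a push of pebble $\pi^{-1}(\ell)$ (and not $\pi(\ell)$) in the $A^\pi$-game, since $\pi^{-1}(\ell)$ is the index whose entry of $A^\pi$ equals $a_\ell$. Once this is sorted out, every verification reduces directly to the definition of a move.
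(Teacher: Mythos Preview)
The paper states this lemma without proof (it falls under the heading ``We present the following simple lemmas without proof'' that precedes Lemma~\ref{l:transpebb}), so there is no argument to compare against. Your proof is correct: the relabelling observation that achievability of $B$ from $A$ implies achievability of $B^\pi$ from $A^\pi$ for any $\pi\in S_k$ is exactly the missing ingredient, and combined with both clauses of Lemma~\ref{l:transpebb} it yields the chain $X\to Y\to Y^\sigma\to X^\sigma$ as you describe. The bookkeeping about $\pi^{-1}(\ell)$ versus $\pi(\ell)$ is handled correctly.
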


\begin{lemma}
    \label{l:permwin}
    Let~$G$ be a finite connected graph and let~$X$ be a game state. 
    Then~$G$ is $k$-pebble-win if and only if the pebble-permutation group of~${(G,X)}$ is~$S_k$.
\end{lemma}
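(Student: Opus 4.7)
For any $\sigma \in S_k$ the tuple $X^\sigma$ still has distinct entries and is hence a valid game state; by the $k$-pebble-win hypothesis it is achievable from $X$, so $\sigma$ lies in the pebble-permutation group of $(G,X)$.

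\textbf{Reverse direction.} Assume the pebble-permutation group of $(G,X)$ is $S_k$. By Lemma~\ref{l:transpebb} it suffices to show that an arbitrary game state $Y$ is achievable from $X$. Let $\underline{Y}$ denote the underlying vertex set of $Y$. I would split the argument into two steps: (A)~reach some game state $Y'$ with $\underline{Y'} = \underline{Y}$; (B)~realise $Y = (Y')^\sigma$ starting from $Y'$ via a permutation.

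For step (A), first note that when $k \ge 2$ the hypothesis forces $|V(G)| > k$, because otherwise no move would be possible and the pebble-permutation group would be trivial; the cases $k \le 1$ are immediate from connectedness of $G$. I would then proceed by induction on $|S \triangle \underline{Y}|$, where $S$ is the current underlying set. If $S \ne \underline{Y}$, pick $v \in S \setminus \underline{Y}$ and $w \in \underline{Y} \setminus S$, and take a shortest $v$--$w$ path $v = p_0, p_1, \ldots, p_\ell = w$ in $G$. Let $0 = i_0 < i_1 < \cdots < i_s < \ell$ enumerate the positions on the path currently holding a pebble (so $p_\ell = w$ is a hole). Slide the pebble at $p_{i_s}$ along the trailing holes $p_{i_s+1}, \ldots, p_{\ell - 1}$ onto $w$; then slide the pebble at $p_{i_{s-1}}$ into the freshly vacated $p_{i_s}$; continue inward; finally slide $v$ into the vacated $p_{i_1}$. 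Each individual move is legal because the target is always a hole at the moment of the move, and the cumulative effect is to replace $v$ by $w$ in the underlying set, reducing $|S \triangle \underline{Y}|$ by two.

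For step (B) I would show that for any state $Y'$ achievable from $X$, the pebble-permutation group of $(G,Y')$ is again $S_k$; writing $Y = (Y')^\sigma$ then completes the reverse direction. Since move-validity depends only on which vertices are occupied and not on the labels carried by the pebbles, a move sequence $M\colon X \to Y'$ can be replayed starting from $X^\pi$, and tracking labels shows that it terminates at $(Y')^\pi$. Concatenating the reverse of $M$, a sequence $X \to X^\pi$ (which exists since $\pi \in S_k$ lies in the pebble-permutation group of $X$), and $M$ itself yields $Y' \to X \to X^\pi \to (Y')^\pi$. I expect step (A) to be the main obstacle, as the sliding construction requires care to ensure that every intermediate move is legal and does not interact with pebbles off the path; step (B) then follows essentially automatically from the label-blindness of moves.
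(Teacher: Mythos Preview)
Your proof is correct and follows essentially the same two-step structure as the paper: first reach a state with the correct underlying vertex set, then permute the labels. The paper's proof is terser---it simply asserts step~(A) in one clause (``since $G$ is connected, \ldots we can move the pebbles to any set of $k$ vertices, up to some permutation of the labels''), whereas you supply an explicit sliding construction; for step~(B) the paper argues $X \to X^{\tau^{-1}} \to Y$ (the second arrow coming implicitly from relabelling $X \to Y^{\tau}$), which is the same label-blindness principle you spell out.
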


\begin{proof}
    Clearly, if the pebble-permutation group is not~$S_k$ then~$G$ is not $k$-pebble-win. 
    Conversely, since~$G$ is connected, for any game states~$X$ and~$Y$ there is some~$\tau$ such that~$Y^{\tau}$ is an achievable game state in the $X$-pebble-pushing game, 
    since we can move the pebbles to any set of~$k$ vertices, up to some permutation of the labels. 
    We know by assumption that~$X^{\tau^{-1}}$ is an achievable game state in the $X$-pebble-pushing game. 
    Therefore, by Lemma~\ref{l:transpebb}, $Y$ is an achievable game state in the $X$-pebble-pushing game.
\end{proof}

\begin{lemma}
    \label{l:RB}
    Let~$G$ be a finite connected graph and let~${X = (x_1,x_2,\ldots,x_k)}$ be a game state. 
    If~$G$ is not $k$-pebble-win, then there is a two colouring~${c \colon X \rightarrow \{r,b\}}$ such that both colour classes are non trivial and for all~${i,j \in [k]}$ with~${c(x_i)=r}$ and~${c(x_j)=b}$ the transposition~${(ij)}$ is not in the pebble-permutation group.
\end{lemma}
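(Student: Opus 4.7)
The plan is to combine Lemma~\ref{l:permwin} with an elementary group-theoretic argument about which transpositions can appear in the pebble-permutation group $\Pi$ of $(G,X)$. Since $G$ is not $k$-pebble-win, Lemma~\ref{l:permwin} gives that $\Pi$ is a proper subgroup of $S_k$. (The conclusion is vacuous for $k \le 1$, so we may assume $k \ge 2$.)

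The key construction is an auxiliary graph $T$ on vertex set $[k]$ in which $i$ is joined to $j$ precisely when the transposition $(i\,j)$ lies in $\Pi$. I would then define the colouring $c$ by taking $c^{-1}(r)$ to be the vertex set of one connected component of $T$ and $c^{-1}(b)$ to be the union of all remaining components. Provided $T$ has at least two components this gives two nonempty colour classes, and by construction every transposition $(i\,j)$ between a red index and a blue index corresponds to a non-edge of $T$, hence fails to lie in $\Pi$.

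The main step, therefore, is to verify that $T$ is indeed disconnected. I would prove the contrapositive: if $T$ is connected, then $\Pi = S_k$, contradicting our assumption. For arbitrary $i, j \in [k]$, pick a path $i = i_0, i_1, \ldots, i_\ell = j$ in $T$, so that each transposition $(i_s\,i_{s+1})$ lies in $\Pi$. A straightforward induction on $\ell$, using the conjugation identity $(a\,b)(b\,c)(a\,b) = (a\,c)$ together with the fact that $\Pi$ is closed under composition (as it is a subgroup of $S_k$ by Lemma~\ref{l:transpebb}), yields $(i\,j) \in \Pi$. Hence $\Pi$ contains every transposition, and since the transpositions generate $S_k$ we conclude $\Pi = S_k$, the desired contradiction.

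I do not anticipate any real obstacle here; the argument is essentially the elementary observation that a set of transpositions in $S_k$ generates $S_k$ as soon as the corresponding graph on $[k]$ is connected. The only care needed is in the bookkeeping to ensure both colour classes end up nonempty, which is automatic as soon as $T$ has more than one component.
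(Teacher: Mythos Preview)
Your proposal is correct and follows essentially the same route as the paper: build the auxiliary graph on the pebble indices with edges given by transpositions in the pebble-permutation group, observe (via Lemma~\ref{l:permwin}) that this graph must be disconnected, and colour one component red and the rest blue. The paper merely asserts as a ``simple exercise'' the fact that connectivity of this graph forces the group to be all of $S_k$, whereas you spell out the conjugation-along-a-path argument explicitly.
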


\begin{proof}
    Let us draw a graph~$H$ on~${\{ x_1,x_2, \ldots, x_k \}}$ by letting~${x_ix_j}$ be an edge if and only if~${(ij)}$ is in the pebble-permutation group of~${(G,X)}$. 
    It is a simple exercise to show that the pebble-permutation group of~${(G,X)}$ is~$S_k$ if and only if~$H$ has a single component.

    Since~$G$ is not $k$-pebble-win, we know by Lemma~\ref{l:permwin} that there are at least two components in~$H$. 
    Let us pick one component~$C_1$ and set~${c(x)=r}$ for all~${x \in V(C_1)}$ and~${c(x)=b}$ for all~${x \in X \setminus V(C_1)}$. 
\end{proof}

\begin{definition}
    Given a graph~$G$, a path~${x_1 x_2 \ldots x_n}$ in~$G$ is a \emph{bare path} if~${d_G(x_i) = 2}$ for all~${2 \leq i \leq n-1}$. 
\end{definition}

\begin{lemma}
    \label{l:kpebbstructure}
    Let~$G$ be a finite connected graph with vertex set~${V := V(G)}$ which is not $k$-pebble-win and with~${|V| \geq k + 2}$. 
    Then there is a bare path ${P = p_1p_2 \ldots p_n}$ in~$G$ such that~${|V \setminus V(P)| \leq k}$. Furthermore, either every edge in~$P$ is a bridge in~$G$, or~$G$ is a cycle.
\end{lemma}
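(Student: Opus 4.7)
The plan is to prove the contrapositive: if $G$ is connected with $|V|\geq k+2$, is not a cycle, and admits no bare path $P$ with $|V\setminus V(P)|\leq k$ whose edges are all bridges, then $G$ is $k$-pebble-win. By Lemma~\ref{l:permwin}, this is equivalent to showing that the pebble-permutation group of $(G,X)$ is the full symmetric group $S_k$ for every game state $X$.

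First I dispose of the case in which every vertex of $G$ has degree at most $2$: connectedness then forces $G$ to be a path or a cycle, and a path is itself a bare path with every edge a bridge, which would supply the required $P$. So I may assume $G$ contains some vertex of degree at least $3$. Let $D=\{v\in V(G):\deg_G(v)\neq 2\}$ and form the skeleton multigraph $G^*$ obtained by suppressing each maximal bare path of $G$ to a single edge, so that $V(G^*)=D$ and the edges of $G^*$ are in bijection with the maximal bare paths of $G$. The lemma's hypothesis rephrases as: no bridge edge of $G^*$ corresponds to a bare path containing at least $|V|-k$ vertices.

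The main dichotomy is then on the structure of $G^*$. If $G^*$ contains an edge lying on a cycle, then in $G$ this translates to a cycle $C$ passing through some vertex $v$ with $\deg_G(v)\geq 3$. The branch at $v$ leaving $C$ provides a parking slot, and together with the two or more empty pebble positions guaranteed by $|V|\geq k+2$, an explicit rotation realises any adjacent transposition of pebbles on $C$; composing these via Lemma~\ref{l:transpebb} generates $S_k$. If instead $G^*$ is a tree, then $G$ is itself a tree (so every edge of $G$ is automatically a bridge), and the failure of the bare-path condition means that no maximal bare path in $G$ has $\geq|V|-k$ vertices. In this regime the tree must have at least two ``branching regions'' or a branching vertex with several arms each long enough to provide an empty parking position; in either situation, a direct pebble-move construction using two empty positions to route pebbles past one another through the branching vertex realises every transposition, again yielding $S_k$.

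The main obstacle I anticipate is supplying the explicit pebble-move constructions realising arbitrary transpositions in each structural regime. The crucial feature to exploit is the presence of two empty positions rather than one: a single empty slot permits only shifting along a chain, whereas a second empty slot serves as a parking space, enabling two pebbles to pass each other at a branching vertex or on a cycle. The interplay between the size of $|V\setminus V(P)|$ and the richness of $G^*$ is delicate; in particular, when $|V\setminus V(P)|$ is close to $k$ one must verify that enough free positions remain near the branching structure to execute the intended rotations, and the contrapositive's hypothesis (no long bare bridge-path) is precisely what guarantees this.
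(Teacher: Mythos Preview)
Your approach is genuinely different from the paper's, and as written it is a plan rather than a proof: you explicitly flag that the ``main obstacle'' --- supplying the pebble-move constructions in each structural regime --- has not been carried out. That is precisely where the content lies. In the cycle case you would need to show that from an arbitrary game state one can manoeuvre the two free slots and the two pebbles to be swapped into the vicinity of the cycle-with-branch, which requires routing through possibly long occupied corridors; and in the tree case your assertion that ``the tree must have at least two branching regions or a branching vertex with several arms each long enough'' is not yet justified from the bare hypothesis that no bare path covers $|V|-k$ vertices. These are not insurmountable, but they are the heart of the argument and they are absent.

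The paper avoids all of this by arguing directly rather than contrapositively. It invokes Lemma~\ref{l:RB} to two-colour the pebbles red/blue so that no red--blue transposition lies in the pebble-permutation group, then among all achievable game states chooses one maximising the length of a path $p_1\ldots p_m$ with red and blue endpoints and empty interior. The maximality immediately forces every vertex off this path to carry a pebble (else one could lengthen or swap), and forces the deep interior vertices $p_3,\ldots,p_{m-2}$ to have degree~$2$ (else a branch gives a swap). This yields the bare path with at most $k$ vertices outside in one stroke, and a short separate claim handles the bridge/cycle dichotomy. The point is that the red--blue colouring converts ``not $k$-pebble-win'' into a concrete local obstruction, so that instead of building permutations you only ever need to exhibit a single forbidden swap to derive a contradiction. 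Your constructive route could in principle succeed, but it trades one clean extremal argument for a case analysis whose details you have not supplied.
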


\begin{proof}
    Let~${X = (x_1,x_2,\ldots, x_k )}$ be a game state. 
    By Lemma~\ref{l:RB}, since~$G$ is not $k$-pebble-win,  there is a two colouring ${c \colon \{x_i \colon i \in [k]\} \rightarrow \{r,b\}}$ such that both colour classes are non trivial and for all~${i,j \in [k]}$ with ${c(x_i)=r}$ and ${c(x_j)=b}$ the transposition~${(ij)}$ is not in the pebble permutation group. 
    Let us consider this as a three colouring ${c\colon V \rightarrow \{r,b,0\}}$ where ${c(v) = 0}$ if ${v \not\in \{x_1,x_2, \ldots, x_k\}}$.

    For every achievable game state~${Z = (z_1,z_2,\ldots,z_k)}$ in the $X$-pebble-pushing game we define a three colouring~$c_Z$ given by~${c_Z(z_i) = c(x_i)}$ for all~${i \in [k]}$ and by~${c_Z(v) = 0}$ for all~${v \notin \lbrace z_1,z_2, \ldots, z_k \rbrace}$. 
    We note that, for any achievable game state~$Z$ there is no~${z_i \in c^{-1}_Z(r)}$ and~${z_j \in c^{-1}_Z(b)}$ such that~${(ij)}$ is in the pebble permutation group of~${(G,Z)}$. 
    Indeed, if it were, then by Lemma~\ref{l:transpebb}~$X^{(ij)}$ is an achievable game state in the $X$-pebble-pushing game, contradicting the fact that~${c(x_i) = r}$ and~${c(x_j) = b}$.

    Since~$G$ is connected, for every achievable game state~$Z$ there is a path ${P = p_1 p_2\ldots p_m}$ in~$G$ with $c_Z(p_1) =r$, $c_Z(p_m) = b$ and $c_Z(p_i)=0$ otherwise. 
    Let us consider an achievable game state~$Z$ for which~$G$ contains such a path~$P$ of maximal length.

    We first claim that there is no~${v \not\in P}$ with~${c_Z(v) = 0}$. 
    Indeed, suppose there is such a vertex~$v$. 
    Since~$G$ is connected there is some $v$--$P$ path~$Q$ in~$G$ and so, by pushing pebbles towards~$v$ on~$Q$, we can achieve a game state~$Z'$ such that ${c_{Z'} = c_{Z}}$ on~$P$ and there is a vertex~$v'$ adjacent to~$P$ such that~${c_{Z'}(v')=0}$. 
    Clearly~$v'$ cannot be adjacent to~$p_1$ or~$p_m$, since then we can push the pebble on~$p_1$ or~$p_m$ onto~$v'$ and achieve a game state~$Z''$ for which~$G$ contains a longer path than~$P$ with the required colouring. 
    However, if~$v'$ is adjacent to~$p_\ell$ with~${2 \leq \ell \leq m-1}$, then we can push the pebble on~$p_1$ onto~$p_\ell$ and then onto~$v'$, then push the pebble from~$p_m$ onto~$p_1$ and finally push the pebble on~$v'$ onto~$p_\ell$ and then onto~$p_m$.

    If~${Z' = (z'_1,z'_2,\ldots, z'_k)}$ with~${p_1 = z'_i}$ and~${p_m = z'_j}$, then above shows that~${(ij)}$ is in the pebble-permutation group of~${(G,Z')}$. 
    However, we have ${c_{Z'}(z'_i) = c_Z(p_1) = r}$ as well as ${c_{Z'}(z'_j) = c_Z(p_m) = b}$, contradicting our assumptions on~$c_{Z'}$.

    Next, we claim that each~$p_i$ with~${3 \leq i \leq m-2}$ has degree~$2$. 
    Indeed, suppose first that~$p_i$ with~${3 \leq i \leq m-2}$ is adjacent to some other~$p_j$ with~${1 \leq j \leq m}$ such that~$p_i$ and~$p_j$ are not adjacent in~$P$. 
    Then it is easy to find a sequence of moves which exchanges the pebbles on~$p_1$ and~$p_m$, contradicting our assumptions on~$c_Z$. 
    
    Suppose then that~$p_i$ is adjacent to a vertex~$v$ not in~$P$. 
    Then, ${c_Z(v) \neq 0}$, say without loss of generality~${c_Z(v) = r}$. 
    However then, we can push the pebble on~$p_m$ onto~$p_{i-1}$, push the pebble on~$v$ onto~$p_i$ and then onto~$p_m$ and finally push the pebble on~$p_{i-1}$ onto~$p_i$ and then onto~$v$. 
    As before, this contradicts our assumptions on~$c_Z$.

    Hence~${P' = p_2p_3 \ldots p_{m-1}}$ is a bare path in~$G$, and since every vertex in~${V - V(P')}$ is coloured using~$r$ or using~$b$, there are at most~$k$ such vertices.

    Finally, suppose that there is some edge in~$P'$ which is not a bridge of~$G$, and so no edge of~$P'$ is a bridge of~$G$. 
    Before we show that~$G$ is a cycle, 
    we make the following claim:

    \begin{claim}\label{c:cyclenogamestate}
        There is no achievable game state ${W=(w_1,w_2,\ldots,w_k)}$ such that there is a cycle ${C = c_1 c_2 \ldots c_r c_1}$ and a vertex~${v \not\in C}$ such that:
        \begin{itemize}
            \item There exist distinct positive integers~${i,j,s}$ and~$t$ such that~${c_W(c_i) = r}$, ${c_W(c_j) = b}$ and~${c_W(c_s)=c_W(c_t)=0}$;
            \item $v$ adjacent to some~${c_v \in C}$.
        \end{itemize}
    \end{claim}
    
    \begin{proof}[Proof of Claim~\ref{c:cyclenogamestate}]
        Suppose for a contradiction there exists such an achievable game state~$W$. 
        Since~$C$ is a cycle, we may assume without loss of generality that~${c_i = c_1}$, ${c_s = c_2 = c_v}$, ${c_t = c_3}$ and~${c_j = c_4}$. 
        If~${c_W(v) = b}$, then we can push the pebble at~$v$ to~$c_2$ and then to~$c_3$, push the pebble at~$c_1 $ to~$c_2$ and then to~$v$, and then push the pebble at~$c_3$ to~$c_1$. 
        This contradicts our assumptions on~$c_W$. 
        The case where~${c_W(v)=r}$ is similar. 
        Finally, if ${c_W(v)=0}$, then we can push the pebble at~$c_1$ to~$c_2$ and then to~$v$, then push the pebble at~$c_4$ to~$c_1$, then push the pebble at~$v$ to~$c_2$ and then to~$c_4$. 
        Again this contradicts our assumptions on~$c_W$.
    \end{proof}

    Since no edge of~$P'$ is a bridge, it follows that~$G$ contains a cycle~$C$ containing~$P'$. 
    If~$G$ is not a cycle, then there is a vertex~${v \in V \setminus C}$ which is adjacent to~$C$. 
    However by pushing the pebble on~$p_1$ onto~$p_2$ and the pebble on~$p_m$ onto~$p_{m-1}$, which is possible since~${|V| \geq k+2}$, we achieve a game state~$Z'$ such that~$C$ and~$v$ satisfy the assumptions of the above claim, a contradiction.
\end{proof}

\section{Pebbly and non-pebbly ends}
\label{s:pebblyend}

\begin{definition}[Pebbly]
    Let~$\Gamma$ be a graph and~$\omega$ an end of~$\Gamma$. 
    We say~$\omega$ is \emph{pebbly} if for every~${k \in \mathbb{N}}$ there is an~${n \geq k}$ and a family~${\mathcal{R}=(R_i \colon i \in [n])}$ of disjoint rays in~$\omega$ such that~${\RG(\mathcal{R})}$ is $k$-pebble-win. 
    If for some~$k$ there is no such family~$\mathcal{R}$, we say~$\omega$ is \emph{non-pebbly} and in particular \emph{not $k$-pebble-win}.
\end{definition}

Clearly an end of degree~$k$ is not $k$-pebble-win, since no graph on at most~$k$ vertices is $k$-pebble-win, and so every pebbly end is thick. 
However, as we shall see, pebbly ends are particularly rich in structure.

\begin{lemma}
    \label{l:pebblycompleteminor}
    Let~$\Gamma$ be a graph and let~${\omega \in \Omega(\Gamma)}$ be a pebbly end. 
    Then~${K_{\aleph_0} \preceq \Gamma}$.
\end{lemma}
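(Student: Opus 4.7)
The plan is to construct the required $IK_{\aleph_0}$ in $\Gamma$ inductively, one branch set at a time. At stage $m$ we maintain finite, connected, pairwise disjoint subgraphs $B^m_1, \ldots, B^m_m$ of $\Gamma$ forming a $K_m$-minor (with an edge of $\Gamma$ between each pair), together with pairwise disjoint $\omega$-rays $R^m_1, \ldots, R^m_m$ in $\Gamma \setminus \bigcup_j B^m_j$, each $R^m_i$ beginning at a vertex adjacent to $B^m_i$. Setting $B_i := \bigcup_{m \geq i} B^m_i$ gives the branch sets of the desired $K_{\aleph_0}$-minor: each $B_i$ is connected as an ascending union of finite connected subgraphs, the $B_i$'s remain pairwise disjoint (since this holds at every stage), and every pair of $B_i$'s inherits its connecting edge from the first stage at which both are present.

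For the inductive step $m \to m+1$, I first invoke the pebbly hypothesis with threshold $k = m+1$ to obtain a family $\mathcal{T} = (T_1, \ldots, T_n)$ of disjoint $\omega$-rays with $RG(\mathcal{T})$ being $(m+1)$-pebble-win. By passing to tails I may assume $\mathcal{T}$ is disjoint from $\bigcup_j B^m_j$ and from the $R^m_j$. Lemma~\ref{l:weaklink} then yields a linkage from $(R^m_1, \ldots, R^m_m)$ to some $m$ of the $T_j$'s after $X := \bigcup_j B^m_j$; relabelling the $T_j$'s we may take the induced bijection to be the identity $[m] \to [m]$. Absorbing the initial pieces of each $R^m_i$ and the interiors of these linkage paths into $B^m_i$ yields enlarged branch sets $\widetilde{B}_i$ (still finite, connected, pairwise disjoint, with the $K_m$-edges preserved), each attached to $T_i$ at the endpoint $y_i \in T_i$ of the corresponding linkage path. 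The ray $T_{m+1}$ will seed $B^{m+1}_{m+1}$.

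Now I use $(m+1)$-pebble-win. For each $i \in [m]$ in turn, Lemma~\ref{l:permwin} together with Lemma~\ref{l:transitionpebble} gives a transition function $\sigma_i$ from $(T_1, \ldots, T_{m+1})$ to $\mathcal{T}$ realising the transposition that swaps $i$ and $m+1$ (while fixing every other coordinate). I let $X_i$ be the union of $\widetilde{B}_1 \cup \cdots \cup \widetilde{B}_m$ and the interiors of all linkage paths produced in sub-iterations $i' < i$, and invoke the linkage guaranteed by $\sigma_i$ after $X_i$. Among its paths, there is a finite path $Q_i$ from some $x'_i \in T_i$ to some $y^*_i \in T_{m+1}$ whose interior avoids $X_i$ and is disjoint from every $T_j$ with $j \in [m+1] \setminus \{i, m+1\}$ (these are fixed points of $\sigma_i$, so their ``new rays'' coincide with themselves). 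Absorbing the interior of $Q_i$ into the seed of $B^{m+1}_{m+1}$, and extending $\widetilde{B}_i$ by the finite segment $y_i T_i x'_i$, produces the new branch set $B^{m+1}_{m+1}$ together with a minor-edge from it to each $B^{m+1}_i$; fresh rays $R^{m+1}_i$ are then taken as suitable tails of $T_i$ (respectively of $T_{m+1}$) chosen beyond all the finitely many vertices we have used on those rays.

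The main obstacle is the disjointness bookkeeping inside a single inductive step. The delicate ray is the common target $T_{m+1}$: every $Q_i$ ends on it and may cross its middle portion in ways not controlled by the disjointness of new rays within a single linkage. Including previous path interiors in $X_i$ and exploiting the ``after $X_i$'' clause of the linkage, which lets me push the linkage arbitrarily far along $T_{m+1}$ and $T_i$, is what forces successive $Q_i$'s to land disjointly, and the choice of $R^{m+1}_i$ as a sufficiently late tail of $T_i$ is what keeps the fresh rays disjoint from the interiors just absorbed. The $(m+1)$-pebble-win hypothesis is precisely what makes each transposition $(i\ m+1)$ available as a transition function via Lemma~\ref{l:transitionpebble}, and hence what guarantees that there is always enough structural room near $\omega$ to perform the next linkage; absent this, the construction would stall, as it does in genuinely non-pebbly examples such as the half grid.
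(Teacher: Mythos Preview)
Your overall strategy is close to the paper's, but there is a genuine gap at the heart of the inductive step. You assert that the linkage path $Q_i$ (from $T_i$ to $T_{m+1}$) has interior disjoint from every $T_j$ with $j \in [m+1] \setminus \{i, m+1\}$, justifying this by saying that these indices are fixed points of $\sigma_i$, ``so their new rays coincide with themselves.'' This inference is incorrect. Even when $\sigma_i(j) = j$, the new ray for index $j$ in the linkage has the form $T_j \hat{x}_j \, P^{(i)}_j \, \hat{y}_j T_j$ for some path $P^{(i)}_j$ that need not lie on $T_j$; the segment $\hat{x}_j T_j \hat{y}_j$ of $T_j$ is then not covered by any of the new rays, and nothing prevents $Q_i$ from passing through it. Concretely, the transposition $(i \; m{+}1)$ is realised in the pebble game as a sequence of single-edge moves; composing the corresponding one-step linkages forces the composite path $Q_i$ to visit every intermediate ray that pebble $i$ traverses on its way to position $m+1$. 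If one of these intermediate rays is $T_j$ for some $j \in [m]$, the vertices of $Q_i \cap T_j$ end up in $B^{m+1}_{m+1}$, yet may lie in the segment $y_j T_j x'_j$ you later absorb into $B^{m+1}_j$, and the branch sets fail to be disjoint. Including earlier $Q_{i'}$'s in $X_i$ does not help here, since the problem is with the \emph{current} $Q_i$ meeting a ray whose branch set is extended only afterwards.

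The paper avoids this by never using a full linkage to produce the minor-edge itself. To add an edge $\{u,v\}$, it first uses a linkage $\varphi_1$ (coming from pebble-win) to transport the $v$-pebble to a vertex $w$ adjacent to $u$ \emph{in the ray graph}, then uses the definition of that ray-graph edge to find a single path $\hat P$ between the tails of $T_u$ and $T_w$ which by definition avoids all other $T_j$, and finally uses a second linkage $\varphi_2$ to move back. Crucially, the branch sets are extended along the full initial segments of the new rays from \emph{both} linkages --- the paths $T_i x_i P_i y_i T_i x'_i P'_i y'_i$ --- so that whatever portion of $T_j$ is skipped by $P_i$ or $P'_i$ is simply not placed in branch set $j$ at all. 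Your argument can be repaired along similar lines: either replace each transposition linkage by this two-linkage-plus-ray-graph-edge manoeuvre, or, if you keep your $m$ transposition linkages, you must at each sub-iteration absorb into $B_j$ the whole initial piece of the $j$th new ray (including its detour $P^{(i)}_j$) rather than only a segment of $T_j$, and update $X_{i'}$ accordingly.
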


\begin{proof}
By assumption, there exists a sequence $\mathcal{R}_1, \mathcal{R}_2, \dots$ of families of disjoint $\omega$-rays such that, for each $k \in \mathbb{N}$, $\RG(\mathcal{R}_k)$ is $k$-pebble-win. Let us suppose that 
\[
\mathcal{R}_i = (R^i_1, R^i_2,\ldots, R^i_{m_i}) \text{ for each } i \in \mathbb{N}.
\]

Let us enumerate the vertices and edges of $K_{\aleph_0}$ with a bijection ${\sigma \colon \mathbb{N} \cup \mathbb{N}^{(2)} \rightarrow \mathbb{N}}$ such that $\sigma(i,j) > \max \{ \sigma(i), \sigma(j)\}$ for every $\{i,j\} \in \mathbb{N}^{(2)}$ and also $\sigma(1) < \sigma(2) < \dotsb$~. For each $k \in \mathbb{N}$ let $G_k$ be the graph on vertex set $V_k = \{ i \in \mathbb{N} \, : \, \sigma(i) \leq k\}$ and edge set $E_k = \{\{i,j\} \in \mathbb{N}^{(2)} \, : \, \sigma(i,j) \leq k \}$.

We will inductively construct subgraphs $H_k$ of $\Gamma$ such that $H_k$ is an $IG_k$ extending $H_{k-1}$. Furthermore for each $k \in \mathbb{N}$ if $V(G_k) = [n]$ then there will be tails $T_1,T_2,\ldots,T_n$ of $n$ distinct rays in $\mathcal{R}_n$ such that for every $i \in [n]$ the tail $T_i$ meets $H_k$ in a vertex of the branch set of $i$, and is otherwise disjoint from $H_k$. We will assume without loss of generality that $T_i$ is a tail of $R^n_i$.

Since $\sigma(1)=1$ we can take $H_1$ to be the initial vertex of $R^1_1$. Suppose then that $V(G_{n-1}) = [r]$ and we have already constructed $H_{n-1}$ together with appropriate tails $T_i$ of $R^r_i$ for each $i \in [r]$. Suppose firstly that $\sigma^{-1}(n) = r+1 \in \mathbb{N}$.

Let $X = V(H_{n-1})$. There is a linkage from $(T_i \colon i \in [r] )$ to $(R^{r+1}_1,R^{r+1}_2, \ldots , R^{r+1}_r)$ after $X$ by Lemma~\ref{l:weaklink}, and, after relabelling, we may assume this linkage induces the identity on $[r]$. Let us suppose the linkage consists of paths $P_i$ from $x_i \in T_i$ to $y_i \in R^{r+1}_i$.

Since $X \cup \bigcup_i P_i \cup \bigcup_i T_ix_i$ is a finite set, there is some vertex $y_{r+1}$ on $R^{r+1}_{r+1}$ such that the tail $y_{r+1}R^{r+1}_{r+1}$ is disjoint from $X \cup \bigcup_i P_i \cup \bigcup_i T_ix_i$.

	To form $H_n$ we add the paths $T_ix_i \cup P_i$ to the branch set of each $i \leq r$ and set $y_{r+1}$ as the branch set for $r+1$. Then $H_n$ is an $IG_n$ extending $H_{n-1}$ and the tails $y_jR^{r+1}_j$ are as claimed.

Suppose then that $\sigma^{-1}(n) = \{u,v\} \in \mathbb{N}^{(2)}$ with $u,v \leq r$. We have tails $T_i$ of $R^r_i$ for each $i \in [r]$ which are disjoint from $H_{n-1}$ apart from their initial vertices. Let us take tails $T_j$ of $R^r_j$ for each $j > r$ which are also disjoint from $H_{n-1}$. Since $\RG(\mathcal{R}_r)$ is $r$-pebble-win, it follows that $\RG(T_i \colon i \in [m_r])$ is also $r$-pebble-win. Furthermore, since by Lemma~\ref{l:connraygraph} $\RG(T_i \colon i \in [m_r])$ is connected, there is some neighbour $w \in [m_r]$ of $u$ in $\RG(T_i \colon i \in [m_r])$.

Let us first assume that $w\notin [r]$. Since $\RG(T_i \colon i \in [m_r])$ is $r$-pebble-win, the game state $(1,2,\ldots,v-1,w,v+1,\ldots,r)$ is an achievable game state in the $(1,2, \ldots ,r)$- pebble-pushing game and hence by Lemma~\ref{l:transitionpebble} the function $\varphi_1$ given by $\varphi_1(i) = i$ for all $i \in [r] \setminus \{v\}$ and $\varphi_1(v)=w$ is a transition function from $(T_i \colon i \in [r])$ to $(T_i \colon i \in [m_r])$.

Let us take a linkage from $(T_i \colon i \in [r])$ to $(T_i \colon i \in [m_r])$ inducing $\varphi_1$ which is after $V(H_{n-1})$. Let us suppose the linkage consists of paths $P_i$ from $x_i \in T_i$ to $y_i \in T_i$ for $i \neq v$ and $P_v$ from $x_v\in T_v$ to $y_v \in T_w$. Let 
\[
X = V(H_{n-1}) \cup \bigcup_{i \in [r]} P_i \cup \bigcup_{i \in [r]} T_ix_i
\]

Since $u$ is adjacent to $w$ in $\RG(T_i \colon i \in [m_r])$ there is a path $\hat{P}$ between $T(T_u,X)$ and $T(T_w,X)$ which is disjoint from $X$ and from all other $T_i$, say $\hat{P}$ is from $\hat{x} \in T_u$ to $\hat{y} \in T_w$.

Finally, since $\RG(T_i \colon i \in [m_r])$ is $r$-pebble-win, the game state $(1,2, \ldots ,r)$ is an achievable game state in the $(1,2,\ldots,v-1,w,v+1,\ldots,r)$-pebble-pushing game and hence by Lemma~\ref{l:transitionpebble} the function $\varphi_2$ given by $\varphi_2(i) = i$ for all $i \in [r] \setminus \{v\}$ and $\varphi_2(w)=v$ is a transition function from $(T_i \colon i \in [r] \setminus \{v\} \cup \{w\})$ to $(T_i \colon i \in [m_r])$.

Let us take a further linkage from $(T_i \colon i \in [r] \setminus \{v\} \cup \{w\})$ to $(T_i \colon i \in [m_r])$ inducing $\varphi_2$ which is after $X \cup \hat{P} \cup T_u \hat{x} \cup y_vT_w\hat{y}$. Let us suppose the linkage consists of paths $P'_i$ from $x'_i \in T_i$ to $y'_i \in T_i$ for $i \in [r] \setminus \{v\}$ and $P'_v$ from $x'_v\in T_w$ to $y'_v \in T_v$.

In the case that $w\in [r]$, $w<v$, say, the game state 
$$(1,2,\ldots,w~-~1,v,w~+~1,\ldots, v~-~1,w,v~+~1,\ldots r)$$
is an achievable game state in the $(1,2, \ldots ,r)$-pebble pushing-game and we get, by a similar argument, all $P_i,x_i,y_i,P_i',x_i',y_i'$ and $\hat{P}$.

We build $H_n$ from $H_{n-1}$ by adjoining the following paths:

\begin{itemize}
\item for each $i \neq v$ we add the path $T_ix_i P_i y_i T_i x'_i P'_i y'_i$ to $H_{n-1}$, adding the vertices to the branch set of $i$;
\item we add $\hat{P}$ to $H_{n-1}$, adding the vertices of $V(\hat{P}) \setminus \lbrace \hat{y} \rbrace$ to the branch set of $u$;
\item we add the path $T_v x_v P_v y_v T_w x'_v P'_v y'_v$ to $H_{n-1}$, adding the vertices to the branch set of $v$.
\end{itemize}

We note that, since $\hat{y} \in y_v T_w x'_v$ the branch sets for $u$ and $v$ are now adjacent. Hence $H_n$ is an $IG_n$ extending $H_{n-1}$. Finally the rays $y'_i T_i$ for $i \in [r]$ are appropriate tails of the used rays of $\mathcal{R}_r$.
\end{proof}

As every countable graph is a subgraph of~$K_{\aleph_0}$, a graph with a pebbly end contains every countable graph as a minor. 
Thus, as~${\aleph_0 G}$ is countable, if~$G$ is countable, we obtain the following corollary: 

\begin{corollary}
    \label{c:pebblyubiq}
    Let~$\Gamma$ be a graph with a pebbly end~$\omega$ and let~$G$ be a countable graph. 
    Then~${\aleph_0 G \preceq \Gamma}$.
\end{corollary}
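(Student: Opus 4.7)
The plan is to combine the preceding lemma, which supplies a $K_{\aleph_0}$-minor inside any graph with a pebbly end, with the elementary fact that every countable graph embeds into $K_{\aleph_0}$ as a subgraph. The corollary is essentially a packaging step, and no new combinatorial work should be required.

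First I would observe that $\aleph_0 G$ is itself a countable graph: as the disjoint union of countably many copies of a countable graph $G$, it has at most $\aleph_0 \cdot \aleph_0 = \aleph_0$ vertices, and similarly at most countably many edges. Hence, after fixing an arbitrary injection $V(\aleph_0 G) \hookrightarrow \mathbb{N}$, we can realise $\aleph_0 G$ as a (spanning) subgraph of $K_{\aleph_0}$, so in particular $\aleph_0 G \subseteq K_{\aleph_0}$, and thus $\aleph_0 G \preceq K_{\aleph_0}$.

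Next, applying the preceding lemma to the pebbly end $\omega$ of $\Gamma$ yields $K_{\aleph_0} \preceq \Gamma$. Since the minor relation $\preceq$ is transitive (composing two $IG$-structures gives another one in the obvious way), we conclude $\aleph_0 G \preceq K_{\aleph_0} \preceq \Gamma$, as required.

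There is no genuine obstacle here: all of the structural content — using the families $\mathcal{R}_k$ with $k$-pebble-win ray graphs together with the weak linking lemma and Lemma~\ref{l:transitionpebble} to build the $K_{\aleph_0}$-minor inductively — has already been carried out in the preceding lemma. The corollary is then just the observation that once a single $K_{\aleph_0}$-minor is found, every countable graph, and in particular $\aleph_0 G$, automatically appears as a minor.
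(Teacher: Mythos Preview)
Your proposal is correct and matches the paper's own justification essentially verbatim: the paper notes that every countable graph is a subgraph of $K_{\aleph_0}$, that $\aleph_0 G$ is countable when $G$ is, and then applies the preceding lemma to get $K_{\aleph_0} \preceq \Gamma$. Nothing needs to change.
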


So, at least when considering the question of $\preceq$-ubiquity for countable graphs, 
Corollary~\ref{c:pebblyubiq} allows one to restrict one's attention to host graphs~$\Gamma$ in which each end is non-pebbly. 
For this reason it will be useful to understand the structure of such ends. 

On immediate observation we can make is the following corollary of Lemma~\ref{l:kpebbstructure}.

\begin{corollary}
    \label{c:pebblyendstructure}
    Let~$\omega$ be an end of a graph~$\Gamma$ which is not $k$-pebble-win for some positive integer~$k$ and let~${\mathcal{R} = (R_i \colon i \in [m])}$ be a family of~${m \geq k+2}$ disjoint rays in~$\omega$. 
    Then there is a bare path~${P = p_1p_2\ldots p_n}$ in~${\RG(R_i \colon i \in [m])}$ such that~${|[m] \setminus V(P)| \leq k}$. 
    Furthermore, either each edge in~$P$ is a bridge in~${\RG(R_i \colon i \in [m])}$, or~${\RG(R_i \colon i \in [m])}$ is a cycle.
\end{corollary}

So, if~$\omega$ is not pebbly, then the ray graph of every family of $\omega$-rays is either close in structure to a path, or close in structure to a cycle. 
In fact, this dichotomy is not just true for each ray graph individually, but rather uniformly for each ray graph in the end. 
That is, we will show that either every ray graph of a family of $\omega$-rays will be close in structure to a path, or every ray graph will be close in structure to a cycle. 
Furthermore, the structure of this end will restrict the possible transition functions between families of $\omega$-rays.

As motivating examples consider the half-grid~${\mathbb{N} \square \mathbb{Z}}$ and the full-grid~${\mathbb{Z} \square \mathbb{Z}}$. 
Both graphs have a unique end~${\omega_h / \omega_f}$ and it is easy to show that the ray graph of every family of $\omega_h$-rays is a path, and the ray graph of every family of $\omega_f$-rays is a cycle (and so in particular $\mathbb{N} \square \mathbb{Z}$ is not $2$-pebble-win and $\mathbb{Z} \square \mathbb{Z}$ is not $3$-pebble-win). 

There is a natural way to order any family of disjoint $\omega_h$-rays, if you imagine them drawn on a page their tails will appear in some order from left to right. 
Then, it can be shown that any transition function between two large enough families of $\omega_h$-rays must preserve this ordering.

Similarly, there is a natural way to cyclically order any family of disjoint $\omega_f$-rays. 
As before, it can be shown that any transition function between two large enough families of $\omega_f$-rays must preserve this ordering.

The aim of the next few sections is to demonstrate that the above dichotomy holds for all non-pebbly ends: that either every ray-graph is close in structure to a path or close in structure to a cycle, and furthermore that in each of these cases the possible transition functions between families of rays are restricted in a similar fashion as those of the half-grid or full-grid, in which case we will say the end is \emph{half-grid-like} or \emph{grid-like} respectively. These results, whilst not used in this paper, will be a vital part of the proof in~\cite{BEEGHPTIII}.

We note that, in principle, this trichotomy that an end of a graph is either pebbly, grid-like or half-grid-like, and the information that this implies about its finite rays graphs and the transitions between them, could in principle be derived from earlier work of Diestel and Thomas~\cite{DT99}, who gave a structural characterisation of graphs without a $K_{\aleph_0}$-minor. 
However, to introduce their result and derive what we needed from it would have been at least as hard as our work in Section~\ref{s:nonpebbly}, if not more complicated, and so we have opted for a straightforward and self-contained presentation.

\section{The structure of non-pebbly ends}
\label{s:nonpebbly}
\subsection{Polypods}
\label{s:polypod}

It will be useful for our analysis of the structure of non-pebbly ends to consider the possible families of disjoint rays in the end with a fixed set of start vertices, and the relative structure of these rays. 

\begin{definition}
    \label{def_polypod}
    Given an end~$\epsilon$ of a graph~$\Gamma$, a {\em polypod (for~$\epsilon$ in~$\Gamma$)} is a pair~${(X, Y)}$ of disjoint finite sets of vertices of~$\Gamma$ 
    such that there is at least one family ${(R_y\colon y \in Y)}$ of disjoint~$\epsilon$-rays, where~$R_y$ begins at~$y$ and all the~$R_y$ are disjoint from~$X$. 
    Such a family ${(R_y\colon y \in Y)}$ is called a {\em family of tendrils} for~${(X,Y)}$. 
    The {\em order} of the polypod is~${|Y|}$. 
    The {\em connection graph}~${K_{X,Y}}$ of a polypod~${(X,Y)}$ is a graph with vertex set~$Y$. 
    It has an edge between vertices~$v$ and~$w$ if and only if there is a family ${(R_y\colon y \in Y)}$ of tendrils for~${(X,Y)}$ such that there is an $R_v$--$R_w$-path in~$\Gamma$ disjoint from~$X$ and from every other~$R_y$. 
\end{definition}

Note that the ray graph of any family of tendrils for a polypod must be a subgraph of the connection graph of that polypod. 

\begin{definition}
    We say that a polypod~${(X,Y)}$ for~$\epsilon$ in~$\Gamma$ is {\em tight} if its connection graph is minimal amongst connection graphs of polypods for~$\epsilon$ in~$\Gamma$ with respect to the spanning isomorphic subgraph relation, i.e.~for no other polypod~${(X',Y')}$ for~$\epsilon$ in~$\Gamma$ of order ${|Y'| = |Y|}$ is the graph~${K_{X',Y'}}$ isomorphic to a proper subgraph of~$K_{X,Y}$. 
    (Let us write ${H \subsetsim G}$ if~$H$ is isomorphic to a subgraph of~$G$.)
    We say that a polypod {\em attains} its connection graph if there is some family of tendrils for that polypod whose ray graph is equal to the connection graph.
\end{definition}

\begin{lemma}
    \label{l:growpolypod}
    Let~${(X,Y)}$ be a tight polypod, ${(R_y \colon y\in Y)}$ a family of tendrils and for every~${y \in Y}$ let~$v_y$ be a vertex on~$R_y$.
    Let~$X'$ be a finite vertex set disjoint from all~${v_y R_y}$ and including~$X$ as well as each of the initial segments~${R_y \mathring{v}_y}$.
    Let ${Y' = \{v_y \colon y\in Y\}}$. 
    Then~${(X', Y')}$ is a tight polypod with the same connection graph as~${(X,Y)}$.
\end{lemma}

\begin{proof}
    The family ${(v_yR_y \colon y\in Y)}$ witnesses that~${(X', Y')}$ is a polypod. 
    Moreover every family of tendrils for~${(X', Y')}$ can be extended by the paths~${R_y v_y}$ to obtain a family of tendrils for~${(X,Y)}$.
    Hence if there is an edge~${v_yv_z}$ in~$K_{X' Y'}$ then there must also be the edge~$yz$ in~$K_{X,Y}$. 
    Thus~${K_{X',Y'} \subsetsim K_{X,Y}}$.
    But since~${(X,Y)}$ is tight we must have equality. 
    Therefore~${(X',Y')}$ is tight as well.
\end{proof}

\begin{lemma}
    \label{l:polypod}
    Any tight polypod~${(X,Y)}$ attains its connection graph.
\end{lemma}

\begin{proof}
    We must construct a family of tendrils for~${(X,Y)}$ whose ray graph is~$K_{X,Y}$. 
    We will recursively build larger and larger initial segments of the rays, together with disjoint paths between them. 
    
    Precisely this means that, after partitioning~$\Nbb$ into infinite sets~$A_e$,  one for each edge~$e$ of~$K_{X,Y}$, we will construct, for each~${n \in \Nbb}$, a family ${(P^n_y \colon y \in Y)}$ of disjoint paths, and also paths~$Q_n$ such that for some arbitrary fixed ray $R  \in \epsilon$:
    
    \begin{itemize}
        \item \label{ten:start} Each~$P^n_y$ starts at~$y$. We write $y_n$ for the last vertex of $P^n_y$.
        \item \label{ten:pathlength} Each~$P^n_y$ has length at least~$n$ and there are at least $n$ disjoint paths from $P^n_y$ to~$R$. 
        \item \label{ten:extend} For~${m \leq n}$, the path~$P^n_y$ extends~$P^m_y$. 
        \item \label{ten:Qjoin} If~${n \in A_{vw}}$ for~${vw \in E(K_{X, Y})}$, then~$Q_n$ is a path from~$P^n_v$ to~$P^n_w$. 
        \item \label{ten:QPdisjoint} If~${n \in A_{vw}}$ for~${vw \in E(K_{X, Y})}$, then~$Q_n$ meets no~$P^m_y$ with~${y \in Y \setminus \{v, w\}}$ for \linebreak any~${m \in \Nbb}$. 
        \item \label{ten:Qdisjoint} All the~$Q_n$ are pairwise disjoint.
        \item \label{ten:Xdisjoint} All the~$P^n_y$ and all the~$Q_n$ are disjoint from~$X$. 
        \item \label{ten:extendable} For any~${n \in \Nbb}$ there is a family ${(R^n_y \colon y \in Y)}$ of tendrils for~${(X,Y)}$ such that each~$P^n_y$ is an initial segment of the corresponding~$R^n_y$, and the~$R^n_y$  meet the~$Q_m$ with~${m \leq n}$ in  $P^n_y \mathring{y}_n$.
    \end{itemize}
    
    Once the construction is complete, we obtain a family of tendrils by letting each~$R_y$ be the union of all the~$P^n_y$ -- indeed, $R_y$ clearly is an $\epsilon$-ray since there are arbitrarily many disjoint paths from $R_y$ to $R$.
    Furthermore, for any edge~$e$ of~$K_{X,Y}$ the family ${(Q_n \colon n \in A_e)}$ will witness that~$e$ is in the ray graph of this family. 
    So that ray graph will be all of~$K_{X,Y}$, as required. 
    
    So it remains to show how to carry out this recursive construction. 
    Let~$vw$ be the edge of~$K_{X,Y}$ with~${1 \in A_{vw}}$. 
    By the definition of the connection graph there is a family ${(R^1_y \colon y \in Y)}$ of tendrils for~${(X,Y)}$ such that there is a path~$Q_1$ from~$R^1_v$ to~$R^1_w$, disjoint from all other~$R^1_y$ and from~$X$. 
    
    For each~${y \in Y}$ let~$P^1_y$ be an initial segment of~$R^1_y$ with end vertex $y_1$ of length at least~$1$ such that $Q_1 \cap R^1_y  \subseteq P^1_y \mathring{y}_1$, and such that there is a path from $P^1_z$ to $R$ -- which is possible, since both $R$ and $R^1_y$ are $\epsilon$-rays.
    
    This choice of the~$P^1_y$ and of~$Q_1$ clearly satisfies the conditions above. 
    
    Suppose that we have constructed suitable~$P^m_y$ and~$Q_m$ for all~${m \leq n}$. 
    For each~${y \in Y}$, let~$y_n$ be the endvertex of~$P^n_y$. 
    Let~$Y_n$ be~${\{y_n \colon y \in Y\}}$ and 
    \[ 
        Z_n = X \cup \bigcup_{m \leq n} \bigcup_{y\in Y} \left(V(P^m_y) \cup V(Q_m)\right). 
    \] 
    Let~$X_n$ be~${Z_n \setminus Y_n}$, and note that every~${V(Q_m) \subseteq X_n}$ for every~${m \leq n}$. 
    Then by Lemma~\ref{l:growpolypod} ${(X_n, Y_n)}$ is a tight polypod with the same connection graph as~${(X,Y)}$. 
    
    In particular, letting~$vw$ be the edge of~$K_{X,Y}$ with~${n+1 \in A_{vw}}$, we have that~${v_nw_n}$ is an edge of~$K_{X_n,Y_n}$. 
    So there is a family ${(S^{n+1}_{y_n} \colon y_n \in Y_n)}$ of tendrils for~${(X_n, Y_n)}$ together with a path~$Q_{n+1}$ from~$S^{n+1}_{v_n}$ to~$S^{n+1}_{w_n}$ disjoint from all other~$S^{n+1}_{y_n}$ and from~$X_n$. 
    Now for any~${y \in Y}$ we let~$R^{n+1}_y$ be the ray~${yP^n_yy_nS^{n+1}_{y_n}}$.
    Let~$P^{n+1}_y  = R^{n+1}_y y_{n+1}$ be an initial segment of~$R^{n+1}_y$  of length at least~${n+1}$ and long enough to include~$P^n_y$, and such that ${Q_{n+1} \cap R^{n+1}_y} \subseteq P^{n+1}_y \mathring{y}_{n+1}$, and such that there are at least $n+1$-disjoint paths between $P^{n+1}_y$ and $R$ - which is possible since both $R$ and $R^{n+1}_y$ are $\epsilon$-rays. This completes the recursion step, and so the construction is complete. 
\end{proof}

\begin{lemma}
    \label{l:raysubgraph}
    Let~${(X,Y)}$ be a polypod of order~$n$ for~$\epsilon$ in~$\Gamma$ with connection graph~$K_{X,Y}$, $(S_y \colon y \in Y)$ be a family of tendrils for $(X,Y)$, and $(R_i \colon i \in I)$ be a set of disjoint $\epsilon$-rays. Then for any transition function $\sigma$ from $\mathcal{S}$ to $\mathcal{R}$ and every pair $y,y' \in Y$ such that there is a path from $\sigma(y)$ to $\sigma(y')$ otherwise avoiding $\sigma(Y)$ in $E(\RG(R_i \colon i \in I))$, the edge  $yy'$ is in $E(K_{X,Y})$.
\end{lemma}

\begin{proof}
    Since $\sigma$ is a transition function there exists a linkage from $\mathcal{S}$ to $\mathcal{R}$ after $X$ which induces $\sigma$. This linkage gives us a family of tendrils $(S'_y \colon y \in Y)$ for~${(X,Y)}$ such that $S'_y$ is a tail of $R_{\sigma(y)}$ for each $y \in Y$. Then, by Lemmas~\ref{l:connraygraph} and~\ref{l:raygraph_subfamily}, if $y,y' \in Y$ are such that there is a path from $\sigma(y)$ to $\sigma(y')$ otherwise avoiding $\sigma(Y)$ in $E(\RG(R_i \colon i \in I))$, then $S'_y$ and $S'_{y'}$ are adjacent in $\RG(S'_y \colon y \in Y)$, and so $y$ and $y'$ are adjacent in $K_{X,Y}$.
\end{proof}

\begin{cordef}
    Any two polypods for~$\epsilon$ in~$\Gamma$ of the same order which attain their connection graphs have isomorphic connection graphs. 
    
    We will refer to the graph arising in this way for polypods of order~$n$ for~$\epsilon$ in~$\Gamma$ as the~$n$\textsuperscript{th} {\em shape graph} of the end~$\epsilon$.
    \nobreak\hfill\qedsymbol
\end{cordef}

\subsection{Frames}
Given a family of tendrils $(R_y \colon y \in Y)$ for a polypod $(X,Y)$ there may be different families of tendrils $(R'_y \colon y \in Y)$ for $(X,Y)$ such that each $R_y$ shares a tail with some $R'_{\pi(y)}$. In order to understand the possible transition functions between different families of rays in $\epsilon$ it will be useful to understand the possible functions $\pi$ that arise in this fashion.

To do so will we consider \emph{frames}, finite subgraphs $L$ which contain a path family between two sets of vertices $\alpha(Y)$ and $\beta(Y)$. For appropriate choices of $\alpha(Y)$ and $\beta(Y)$ these will be the subgraphs arising from a linkage from the family of tendrils $(R_y \colon y \in Y)$ to itself after $X$, each of which gives rise to a family $(R'_y \colon y \in Y)$ as above.

Some frames will contain multiple such path families, linking $\alpha(Y)$ to $\beta(Y)$ in different ways. 
For appropriately chosen frames the possible ways we can link $\alpha(Y)$ to $\beta(Y)$ will be restricted by the structure of $K_{X,Y}$, which will allow us relate this to the possible transition functions from $(R_y \colon y \in Y)$ to itself, and from there to the possible transition functions between different families of rays.

\begin{definition}
    Let~$Y$ be a finite set. 
    A {\em $Y$-frame}~${(L,\alpha,\beta)}$ consists of a finite graph~$L$ together with two injections~$\alpha$ and~$\beta$ from~$Y$ to~${V(L)}$. 
    The set~${A = \alpha(Y)}$ is called the {\em source set} and the set~${B = \beta(Y)}$ is called the {\em target set}. 
    A {\em weave} of the $Y$-frame is a family ${\Qcal = (Q_y \colon y \in Y)}$ of disjoint paths in~$L$ from~$A$ to~$B$, where the initial vertex of~$Q_y$ is~$\alpha(y)$ for each~${y \in Y}$. 
    The {\em weave pattern}~$\pi_{\Qcal}$ of~$\Qcal$ is the bijection from~$Y$ to itself sending~$y$ to the inverse image under~$\beta$ of the endvertex of~$Q_y$. 
    In other words, $\pi_{\Qcal}$ is the function so that every~$Q_y$ is an $\alpha(y)$--$\beta(\pi_{\Qcal}(y))$ path. 
    The {\em weave graph}~$K_{\Qcal}$ of~$\Qcal$ has vertex set~$Y$ and an edge joining distinct vertices~$u$ and~$v$ of~$Y$ precisely when there is a path from~$Q_u$ to~$Q_v$ in~$L$ disjoint from all other~$Q_y$. 
    For a graph~$K$ with vertex set~$Y$, we say that the $Y$-frame is {\em $K$-spartan} if all its weave graphs are subgraphs of~$K$ and all its weave patterns are automorphisms of~$K$.
\end{definition} 

Connection graphs of polypods and weave graphs of frames are closely connected.

\begin{lemma}
    \label{l:getspartan}
    Let~${(X,Y)}$ be a polypod for~$\epsilon$ in~$\Gamma$ attaining its connection graph~$K_{X,Y}$ and let ${\mathcal{R}=(R_y \colon y \in Y)}$ be a family of tendrils for~${(X,Y)}$. 
    Let~$L$ be any finite subgraph of~$\Gamma$ disjoint from~$X$ but meeting all the~$R_y$. 
    For each~${y \in Y}$ let~$\alpha(y)$ be the first vertex of~$R_y$ in~$L$ and~$\beta(y)$ the last vertex of~$R_y$ in~$L$. 
    Then the $Y$-frame ${(L, \alpha, \beta)}$ is $K_{X,Y}$-spartan.
\end{lemma}

\begin{proof}
    Since there is some family of tendrils ${(S_y \colon y\in Y)}$ attaining~$K_{X,Y}$ and there is, by Lemma~\ref{l:weaklink}, a linkage from ${(R_y \colon y \in Y)}$ to ${(S_y\colon y\in Y)}$ after~$X$ and~${V(L)}$, 
    we may assume without loss of generality that ${\RG(R_y \colon y\in Y)}$ is isomorphic to~$K_{X,Y}$. 
    
    For a given weave ${\Qcal = (Q_y \colon y \in Y)}$, applying the definition of the connection graph to the rays ${R'_y=R_y\alpha(y)Q_y\beta(\pi_{\Qcal}(y))R_{\pi_{\Qcal}(y)}}$ shows that~$K_{\Qcal}$ is a subgraph of~$K_{X,Y}$. Furthermore, since ${\RG(R_y \colon y\in Y)}$ is isomorphic to $K_{X,Y}$, for any $uv \in E(K_{X,Y})$ there is a path from $R_u$ to $R_v$ which is disjoint from $R_u\alpha(u) \cup R_v\alpha(v) \cup L \cup X$ and which doesn't meet any other $R_y$, and so joins $R'_{\pi^{-1}(u)}$ to $R'_{\pi^{-1}(v)}$. So, the family of tendrils $(R'_y \colon y \in Y)$ witness that $\pi^{-1}(u)\pi^{-1}(v) \in E(K_{X,Y})$, and so $\pi_{\Qcal}$ is an automorphism of~$K_{X,Y}$.
\end{proof}

\begin{corollary}
    \label{c:transitions}
    Let~${(X,Y)}$ be a polypod for~$\epsilon$ in~$\Gamma$ attaining its connection graph~$K_{X,Y}$ and let ${\Rcal = (R_y \colon y \in Y)}$ be a family of tendrils for~${(X,Y)}$. 
    Then for any transition function~$\sigma$ from~$\Rcal$ to itself there is a $K_{X,Y}$-spartan $Y$-frame for which both~$\sigma$ and the identity are weave patterns. 
\end{corollary}

\begin{proof}
    Let~${(P_y \colon y\in Y)}$ be a linkage from~$\Rcal$ to itself after~$X$ inducing~$\sigma$, and let~$L$ be a finite subgraph graph of~$\Gamma$ containing~$\bigcup_{y \in Y} P_y$ as well as a finite segment of each~$R_y$, such that each~$P_y$ is a path between two such segments. 
    Then the~$Y$-frame on~$L$ which exists by Lemma~\ref{l:getspartan} has the desired properties. 
\end{proof}

\begin{lemma}
    \label{l:raygraphs}
    Let~${(X,Y)}$ be a polypod for~$\epsilon$ in~$\Gamma$ attaining its connection graph~$K_{X,Y}$ and let ${\mathcal{R}=(R_y \colon y \in Y)}$ be a family of tendrils for~${(X,Y)}$. 
    Then there is a $K_{X,Y}$-spartan $Y$-frame for which both~$K_{X,Y}$ and~${\RG(R_y \colon y\in Y)}$ are weave graphs.
\end{lemma}

\begin{proof}
    By adding finitely many vertices to~$X$ if necessary, we may obtain a superset~$X'$ of~$X$ such that for any two of the~$R_y$, if there is any path between them disjoint from all the other rays and~$X'$, then there are infinitely many disjoint such paths. 
    Let ${(S_y \colon y \in Y)}$ be any family of tendrils for~${(X,Y)}$ with connection graph~$K_{X,Y}$.
    
    For each edge~${e = uv}$ of~$\RG(\mathcal{R})$ let~$P_e$ be a path from~$R_u$ to~$R_v$ disjoint from all the other~$R_y$ and from~$X'$. 
    Similarly for each edge~${f = uv}$ of~$K_{X,Y}$ let~$Q_f$ be a path from~$S_u$ to~$S_v$ disjoint from all the other~$S_y$ and from~$X'$. 
    Let~${(P'_y \colon y \in Y)}$ be a linkage from the~$S_y$ to the~$R_y$ after 
    \[
        X' \cup \bigcup_{e\in E(\RG(\mathcal{R}))}P_e \cup \bigcup_{f \in E(K_{X,Y})}Q_f.
    \]
    Let the initial vertex of~$P'_y$ be~$\gamma(y)$ and the end vertex be~$\beta(y)$. 
    Let~$\pi$ be the permutation of~$Y$ by setting~$\pi(y)$ to be the element of~$Y$ with~$\beta(y)$ on~$R_{\pi(y)}$. 
    Let $L$ be the graph given by the union of all paths of the form $S_y\gamma(y)$ and $R_{\pi(y)}\beta(y)$ together with $P'_y$, $P_e$ and $Q_e$.
    
    Letting~$\alpha$ be the identity function on~$Y$, it follows from Lemma~\ref{l:getspartan} that ${(L, \alpha, \beta)}$ is a $K_{X,Y}$-spartan $Y$-frame. 
    The paths~$Q_f$ witness that the weave graph for the paths~${S_y\gamma(y)P'_y}$ includes~$K_{X,Y}$ and so, by $K_{X,Y}$-spartanness, must be equal to~$K_{X,Y}$. 
    The paths~$P_e$ witness that the weave graph for the paths~${R_y\beta(y)}$ includes the ray graph~${\RG(\mathcal{R})}$. 
    However conversely, since~$V(L)$ is disjoint from~$X'$, if two of the~$R_y$ are joined in~$L$ by a path disjoint from the other rays in~$\mathcal{R}$ then they are joined by infinitely many, and hence adjacent in~${\RG(\mathcal{R})}$. 
    It follows that the weave graph is equal to~${\RG(\mathcal{R})}$.
\end{proof}

Hence to understand ray graphs and the transition functions between them it is useful to understand the possible weave graphs and weave patterns of spartan frames. Their structure can be captured in terms of automorphisms and cycles.

\begin{definition}
    Let~$K$ be a finite graph. 
    An automorphism~$\sigma$ of~$K$ is called {\em local} if it is a cycle ${(z_1 \ldots z_t)}$ where, for any~${i \leq t}$, there is an edge from~$z_i$ to~${\sigma(z_i)}$ in~$K$. 
    If~${t \geq 3}$ this means that~${z_1 \ldots z_tz_1}$ is a cycle of~$K$, and we call such cycles {\em turnable}. 
    If~${t = 2}$ then we call the edge~${z_1z_2}$ of~$K$ {\em flippable}. 
    We say that an automorphism of~$K$ is {\em locally generated} if it is a product of local automorphisms. 
\end{definition}

\begin{remark}
    \label{r:cutvertices}
    A cycle~$C$ in~$K$ is turnable if and only if all its vertices have the same neighbourhood in~${K - C}$, and whenever a chord of length~${\ell \in \Nbb}$, i.e.~a chord whose endvertices have distance~$\ell$ on~$C$, is present in~${K[C]}$, then all chords of length~$\ell$ are present. 
    Similarly an edge~$e$ of~$K$ is flippable if and only if its two endvertices have the same neighbourhood in~${K-e}$. 
    Thus, if~$K$ is connected and contains at least three vertices, no vertex of degree one or cutvertex of~$K$ can lie on a turnable cycle or a flippable edge. 
    So vertices of degree one and cutvertices in such graphs are preserved by locally generated automorphisms.
\end{remark}

\begin{lemma}
    \label{l:controlframe}
    Let $\mathcal{L}=(L, \alpha, \beta)$ be a $K$-spartan $Y$-frame which is $K$-spartan. Then for any two of its weave patterns~$\pi$ and~$\pi'$ the automorphism~${\pi^{-1}\cdot\pi'}$ of~$K$ is locally generated. Furthermore, if $K$ is a weave graph for $\mathcal{L}$ then each weave graph for $\mathcal{L}$ contains a turnable cycle or a flippable edge of~$K$.
\end{lemma}
\begin{proof}
    Let us suppose, for a contradiction, that the conclusion does not hold and let ${\mathcal{L} = (L, \alpha, \beta)}$ be a counterexample in which~${|E(L)|}$ is minimal. 
    Let ${\Pcal = (P_y \colon y \in Y)}$ and ${\Qcal = (Q_y \colon y \in Y)}$ be weaves for $\mathcal{L}$ such that either $\pi_\Pcal \neq \pi_\Qcal$ and~${\pi_\Pcal^{-1}\cdot\pi_\Qcal}$ is not locally generated, or $K_\Pcal = K$ and $K_\Qcal$ does not contain a turnable cycle or a flippable edge of~$K$.
    
    Each edge of~$L$ is in one path of~$\Pcal$ or~$\Qcal$ since otherwise we could simply delete it. 
    Similarly no edge appears in both~$\Pcal$ and~$\Qcal$ since otherwise we could simply contract it. 
    No vertex appears on just one of~$P_y$ or~$Q_y$ since otherwise we could contract one of the two incident edges. 
    Vertices of~$L$ appearing in neither~${\bigcup \Pcal}$ nor~${\bigcup \Qcal}$ are isolated and so may be ignored. 
    Thus we may suppose that each edge of~$L$ appears in precisely one of~$\Pcal$ or~$\Qcal$, and that each vertex of~$L$ appears in both. 
    
    Let~$Z$ be the set of those~${y \in Y}$ for which~${\alpha(y) \neq \beta(y)}$. 
    For any~${z \in Z}$ let~$\gamma(z)$ be the second vertex of~$P_z$, i.e.~the neighbour of~$\alpha(z)$ on~$P_z$, and let~${f(z) \in Y}$ be chosen such that~$\gamma(z)$ lies on~$Q_{f(z)}$. 
    Then since~${\gamma(z) \neq \alpha(f(z))}$ we have~${f(z) \in Z}$ for all~${z \in Z}$. 
    Furthermore, $Z$ is nonempty as~$\Pcal$ and~$\Qcal$ are distinct. 
    Let~$z$ be any element of~$Z$. 
    Then since~$Z$ is finite there must be~${i < j}$ with ${f^i(z) = f^j(z)}$, which means that~${f^i(z) = f^{j-i}(f^i(z))}$. 
    Let~${t>0}$ be minimal such that there is some~${z_1 \in Z}$ with~${z_1 = f^t(z_1)}$. 
    
    If~${t = 1}$ then we may delete the edge~${\alpha(z_1)\gamma(z_1)}$ and replace~$P_{z_1}$ with~${\alpha(z_1)Q_{z_1}\gamma(z_1)P_{z_1}}$. 
    This preserves all of~$\pi_{\Pcal}$, $\pi_{\Qcal}$ and~$K_{\Qcal}$, and can only make~$K_{\Pcal}$ bigger, contradicting the minimality of our counterexample. 
    So we must have~${t \geq 2}$.
    
    For each~${i \leq t}$ let~$z_i$ be~${f^{i-1}(z_1)}$ and let~$\sigma$ be the bijection~${(z_1z_2 \ldots z_t)}$ on~$Y$. 
    Let~$L'$ be the graph obtained from~$L$ by deleting all vertices of the form~$\alpha(z_i)$. 
    Let~$\alpha'$ be the injection from~$Y$ to~$V(L')$ sending~$z_i$ to~$\gamma(z_i)$ for~${i \leq t}$ 
    and sending any other~${y \in Y}$ to~$\alpha(y)$. 
    Then ${(L', \alpha', \beta)}$ is a $Y$-frame. 
    For any weave ${(\hat P_y \colon y \in Y)}$ in this $Y$-frame, ${(P''_y : y \in Y)}$ where ${P''_{z_i} = \alpha(z_i) \gamma(z_i) \hat{P}_{z_i}}$ for every~${i\leq t}$ and~${P''_y = \alpha(y)}$ for every~${y \in Y \setminus \{z_1,\ldots,z_t\}}$
    is a weave in ${(L, \alpha, \beta)}$ with the same weave pattern and whose weave graph includes that of~${(\hat P_y \colon y \in Y)}$. 
    Thus~${(L', \alpha', \beta)}$ is $K$-spartan. 
    
    Let~$P'_y$ be~${\alpha'(y)P_y}$ and~$Q'_{y}$ be~${\alpha'(y)Q_{\sigma(y)}}$ for each~${y \in Y}$. 
    Now set ${\mathcal{P'} = (P'_y \colon y \in Y)}$ and~${\mathcal{Q'} = (Q'_y \colon y \in Y)}$. 
    Then we have ${\pi_{\Qcal'} = \pi_{\Qcal}\cdot \sigma}$ and so ${\sigma = \pi_{\Qcal}^{-1} \cdot \pi_{\Qcal'}}$ is an automorphism of~$K$ since~$\pi_Q$ is an automorphism of~$K$ by the $K$-spartanness. 
    For any~${i \leq t}$ the edge~${\alpha(z_i)\gamma(z_i)}$ witnesses that~${z_i \sigma(z_i)}$ is an edge of~$K_{\Qcal}$, and hence, since~$\mathcal{L}$ is~$K$-spartan, also an edge of~$K$, and so~$\sigma$ is a local automorphism of $K$. It follows that~$K_{\Qcal}$ includes a turnable cycle or a flippable edge. 
    Finally, by the minimality of~${|E(L)|}$ we know that~${\pi_{\Pcal'}^{-1}\cdot\pi_{\Qcal'}}$ is locally generated and hence so is ${\pi_{\Pcal}^{-1}\cdot\pi_{\Qcal} = \pi_{\Pcal'}^{-1}\cdot\pi_{\Qcal'}\cdot\sigma^{-1}}$. 
    This is the desired contradiction.
\end{proof}

Finally, the following two lemmas are the main conclusions of this section:

\begin{lemma}
    \label{l:raynopaths}
    Let~${(X,Y)}$ be a polypod attaining its connection graph~$K_{X,Y}$ such that~$K_{X,Y}$ is a cycle of length at least~$4$. 
    Then for any family of tendrils~$\Rcal$ for this polypod the ray graph is~$K_{X,Y}$. 
    Furthermore, any transition function from~$\Rcal$ to itself preserves each of the cyclic orientations of~$K_{X,Y}$.
\end{lemma}

\begin{proof}
    By Lemma~\ref{l:raygraphs} there is some $K_{X,Y}$-spartan $Y$-frame for which both~$K_{X,Y}$ and the ray graph~${\RG(\mathcal{R})}$ are weave graphs. 
    Since~$K_{X,Y}$ is a cycle of length at least~$4$ and hence has no flippable edges, the ray graph must include a cycle by Lemma~\ref{l:controlframe} and so since it is a subgraph of~$K_{X,Y}$ it must be the whole of~$K_{X,Y}$. 
    Similarly Lemma~\ref{l:controlframe} together with Corollary~\ref{c:transitions} shows that all transition functions must be locally generated and so must preserve the orientation.
\end{proof}

\begin{lemma}
    \label{l:barepath}
    Let~${(X, Y)}$ be a polypod attaining its connection graph~$K_{X,Y}$ such that~$K_{X,Y}$ includes a bare path~$P$ whose edges are bridges. 
    Let~$\Rcal$ be a family of tendrils for~${(X, Y)}$ whose ray graph is~$K_{X,Y}$. 
    Then for any transition function~$\sigma$ from~$\Rcal$ to itself, the restriction of~$\sigma$ to~$P$ is the identity.
\end{lemma}

\begin{proof}
    By Lemmas~\ref{c:transitions} and~\ref{l:controlframe} any transition function must be a locally generated automorphism of~$K_{X,Y}$, and so by Remark~\ref{r:cutvertices} it cannot move the vertices of the bare path, which are vertices of degree one or cutvertices.
\end{proof}

\section{Grid-like and half-grid-like ends}
\label{s:gridandhalfgrid}

We are now in a position to analyse the different kinds of thick ends which can arise in a graph in terms of the possible ray graphs and the transition functions between them. 
The first kind of ends are the pebbly ends, in which, by Corollary~\ref{c:pebblyubiq}, for any~$n$ we can find a family of~$n$ disjoint rays whose ray graph is~$K_n$ and for which every function~${\sigma \colon [n] \rightarrow [n]}$ is a transition function.

So, in the following let us fix a graph~$\Gamma$ with a thick non-pebbly end~$\epsilon$ and a number~${N \in \Nbb}$, where~${N \geq 3}$, such that~$\epsilon$ is not $N$-pebble win. 
Under these circumstances we get nontrivial restrictions on the ray graphs and the transition functions between them. 
There are two essentially different cases, corresponding to the two cases in Corollary~\ref{c:pebblyendstructure}: 
The grid-like and the half-grid-like case.

\subsection{Grid-like ends}

The first case focuses on ends which behave like that of the infinite grid. In this case, all large enough ray graphs are cycles and all transition functions between them preserve the cyclic order. 

Formally, we say that the end~$\epsilon$ is {\em grid-like} if the ${(N + 2)}$\textsuperscript{nd} shape graph for~$\epsilon$ is a cycle. 
For the rest of this subsection we will assume that~$\epsilon$ is grid-like. 
Let us fix some polypod~${(X,Y)}$ of order~${N+2}$ attaining its connection graph. 
Let~${(S_y \colon y \in Y)}$ be a family of tendrils for~${(X,Y)}$ whose ray graph is the cycle~${C_{N+2} = K_{X,Y}}$.

\begin{lemma}
    \label{l:raygraphsarecycles}
    The ray graph~$K$ for any family~${(R_i \colon i \in I)}$ of $\epsilon$-rays in~$\Gamma$ with~${|I| \geq N+2}$ is a cycle. 
\end{lemma}

\begin{proof}
    By Corollary~\ref{c:pebblyendstructure}, $K$ is either a cycle or contains a bridge. 
    However, given any edge~${ij \in E(K)}$, let~${J \subseteq I}$ be such that~${i,j \in J}$ and~${|J|=N+2}$. 
    Let ${(T_y \colon y \in Y)}$ be a family of tendrils for ${(X,Y)}$ obtained by transitioning from~$(S_y \colon y \in Y)$ to~${(R_j \colon j \in J)}$ after~$X$ along some linkage. 
    By Lemma~\ref{l:connraygraph}, the ray graph $K_J$ of ${(R_i \colon i \in J)}$ is isomorphic to the ray graph of ${(T_y \colon y \in Y)}$, which is a cycle by Lemma~\ref{l:raynopaths}.

    Hence, $ij$ is not a bridge of~$K_J$, and it is easy to see that this implies that~$ij$ is not a bridge of~$K$. Hence, $K$ is a cycle. 
\end{proof}

Given a cycle~$C$ a \emph{cyclic orientation of~$C$} is an orientation of the graph~$C$ which does not have any sink. Note that any cycle has precisely two cyclic orientations. Given a cyclic orientation and three distinct vertices~${x, y, z}$ we say that they \emph{appear consecutively in the order~${(x, y, z)}$} if~$y$ lies on the unique directed path from~$x$ to~$z$. Given two cycles~${C, C'}$, each with a cyclic orientation, we say that an injection~${f \colon V(C) \to V(C')}$ \emph{preserves the cyclic orientation} if whenever three distinct vertices~${x,y}$ and~$z$ appear on~$C$ in the order~${(x,y,z)}$ then their images appear on~$C'$ in the order~${(f(x),f(y),f(z))}$.

We will now choose cyclic orientations of every large enough ray-graph such that the transition functions preserve the cyclic orders corresponding to those orientations. To that end, we fix a cyclic orientation of $K_{X,Y}$. We say that a cyclic orientation of the ray graph for a family $(R_i \colon i \in I)$ of at least $N+3$ disjoint $\epsilon$-rays is {\em correct} if there is a transition function $\sigma$ from the $S_y$ to the $R_i$ which preserves the cyclic orientation of $K_{X,Y}$.

\begin{lemma}
    \label{l:correctorientationcyclic}
    For any family ${(R_i \colon i \in I)}$ of at least~${N+3}$ disjoint $\epsilon$-rays there is precisely one correct cyclic orientation of its ray graph. 
\end{lemma}

\begin{proof}
    We first claim that there is at least one correct cyclic orientation. 
    By Lemma~\ref{l:weaklink}, there is a transition function~$\sigma$ from the~$S_y$ to some subset~$J$ of~$I$, 
    and we claim that there is some cyclic orientation of the ray graph~$K$ of~${(R_i \colon i \in I)}$ such that~$\sigma$ preserves the cyclic orientation of~$K_{X,Y}$.
    
    We first note that the ray graph~$K_J$ of~${(R_i \colon i \in J)}$ is a cycle by Lemma~\ref{l:raygraphsarecycles}, 
    and it is obtained from~$K$ by subdividing edges, which doesn't affect the cyclic order. 
    Hence it is sufficient to show that there is some cyclic orientation of~$K_J$ such that~$\sigma$ preserves the cyclic orientation of~$K_{X,Y}$.
    
    Since each linkage inducing~$\sigma$ gives rise to a family of tendrils~$(S'_y \colon y \in Y)$ where~$S'y$ shares a tail with~$R_{\sigma(y)}$, it follows that if~$\sigma(y)$ and~$\sigma(y)'$ are adjacent in~$K_J$ then~$y$ and~$y'$ are adjacent in~$K_{X,Y}$. 
    Since both~$K_J$ and~$K_{X,Y}$ are cycles, it follows that there is some cyclic orientation of~$K_J$ such that~$\sigma$ preserves the cyclic orientation of~$K_{X,Y}$.
    
    Suppose for a contradiction that there are two, and let~$\sigma$ and~$\sigma'$ be transition functions witnessing that both orientations of the ray graph are correct. 
    By Lemma~\ref{l:transitionpebble} we may assume without loss of generality that the images of~$\sigma$ and~$\sigma'$ are the same. 
    Call this common image~$I'$. 
    Since the ray graphs of~${(R_i \colon i \in I)}$ and~${(R_i \colon i \in I')}$ are both cycles, the former is obtained from the latter by subdivision of edges. 
    Since this does not affect the cyclic order, we may assume without loss of generality that~${I' = I}$. 
    By Lemma~\ref{l:weaklink} again, there is some transition function $\tau$ from the~$R_i$ to the~$S_y$. By Lemma~\ref{l:raynopaths}, both~${\tau \cdot \sigma}$ and~${\tau \cdot \sigma'}$ must preserve the cyclic order, which is the desired contradiction. 
\end{proof}

It therefore makes sense to refer to {\em the} correct orientation of a ray graph.

\begin{corollary}
    \label{l:correctorientationcycletransition}
    Any transition function between two families of at least~${N + 3}$ $\epsilon$-rays preserves the correct orientations of their ray graphs. 
\end{corollary}

\begin{proof}
    Suppose that~${\mathcal{R} = (R_i \colon i\in I)}$ and~${\mathcal{T}=(T_j \colon j \in J)}$ are families of at least~${N+3}$ rays and~$\sigma$ is a transition function from~$\mathcal{R}$ to~$\mathcal{T}$.
    
    Let us fix some transition function~$\tau$ from~${(S_y \colon y \in Y)}$ to~$\mathcal{R}$ and let~$\mathcal{P}$ be a linkage from~${(S_y \colon y \in Y)}$ to~$\mathcal{R}$ which induces~$\tau$. 
    For any finite~${X \subseteq V(\Gamma)}$ there is a linkage~$\mathcal{P}'$ from~$\mathcal{R}$ to~$\mathcal{T}$ which is after~${\bigcup \mathcal{P} \cup X}$ and which induces~$\sigma$. 
    Then, ${((S_y \colon y \in Y) \circ_{\mathcal{P}} \mathcal{R}) \circ_{\mathcal{P'}} \mathcal{T}}$ is a linkage from~${(S_y \colon y \in Y)}$ to~$\mathcal{T}$ which is after~$X$ and induces~${\sigma \cdot \tau}$. 
    It follows that~${\sigma \cdot \tau}$ is a transition function from~${(S_y \colon y \in Y)}$ to~$\mathcal{T}$.
    
    However, by the definition of correct orientation and Lemma~\ref{l:correctorientationcyclic}, $\tau$ and~${\sigma \cdot \tau}$ both preserve the cyclic orientation of~$K_{X,Y}$, and hence~$\sigma$ must preserve the correct orientation of the ray graphs of~$\mathcal{R}$ and~$\mathcal{T}$.
\end{proof}

\subsection{Half-grid-like ends}

In this subsection we suppose that~$\epsilon$ is thick but neither pebbly nor grid-like. 
We shall call such ends {\em half-grid-like}, since as we shall shortly see in this case the ray graphs and the transition functions between them behave similarly to those for the unique end of the half-grid. 
Note that, by definition Theorem~\ref{t:classification} holds.

We will need to carefully consider how the ray graphs are divided up by their cutvertices. 
In particular, for a graph~$K$ and vertices~$x$ and~$y$ of~$K$ we will denote by~${C^{xy}(K)}$ the union of all components of~${K - x}$ which do not contain~$y$, 
and we will denote by~$K^{xy}$ the graph~${K - C^{xy}(K) - C^{yx}(K)}$. 
We will refer to~$K^{xy}$ as the part of~$K$ {\em between}~$x$ and~$y$. 

As in the last subsection, let~${(X, Y)}$ be a polypod of order~${N+2}$ attaining its connection graph and let~${(S_y \colon y \in Y)}$ be a family of tendrils for~${(X,Y)}$ with ray graph~$K_{X,Y}$, which by assumption is not a cycle. 
By Corollary~\ref{c:pebblyendstructure} there is a bare path of length at least~$1$ in~$K_{X,Y}$ all of whose edges are bridges. 
Let~${y_1y_2}$ be any edge of that path. 
Without loss of generality we have~${C^{y_1y_2}(K_{X,Y}) \neq \emptyset}$. 

Throughout the remainder of this section we will always consider arbitrary families ${\mathcal{R}=(R_i \colon i \in I)}$ of disjoint~$\epsilon$-rays with~${|I| \geq N+3}$. 
We will write~$K$ to denote the ray graph of~$\Rcal$.

\begin{remark}
    \label{r:prescut}
    For any transition function $\sigma$ from the $S_y$ to the $R_i$ we have the inclusions $\sigma[V(C^{y_1y_2}(K_{X,Y}))] \subseteq V(C^{\sigma(y_1)\sigma(y_2)}(K))$ and $\sigma[V(C^{y_2y_1}(K_{X,Y}))] \subseteq V(C^{\sigma(y_2)\sigma(y_1)}(K))$ by Lemma~\ref{l:raysubgraph}. 
    Thus~$\sigma[Y]$ and~${V(K^{\sigma(y_1)\sigma(y_2)})}$ meet precisely in~$\sigma(y_1)$ and~$\sigma(y_2)$.
\end{remark}

\begin{lemma}
    \label{l:centralpathbare}
    For any transition function~$\sigma$ from the~$S_y$ to the~$R_i$ the graph~${K^{\sigma(y_1)\sigma(y_2)}}$ is a path from~$\sigma(y_1)$ to~$\sigma(y_2)$. 
    This path is a bare path in~$K$ and all of its edges are bridges.
\end{lemma}

\begin{proof}
    Since~$K$ is connected, ${K^{\sigma(y_1)\sigma(y_2)}}$ must include a path~$P$ from~$\sigma(y_1)$ to~$\sigma(y_2)$. 
    If it is not equal to that path then it follows from Lemma~\ref{l:transitionpebble} that the function~$\sigma'$, which we define to be just like~$\sigma$ except for~${\sigma'(y_1) = \sigma(y_2)}$ and~${\sigma'(y_2) = \sigma(y_1)}$, is a transition function from the~$S_y$ to the~$R_i$. 
    But then by Remark~\ref{r:prescut} we have 
    \begin{align*}
        \sigma[V(C^{y_1y_2}(K_{X,Y}))] 
        &\subseteq V(C^{\sigma(y_1)\sigma(y_2)}(K)) \cap V(C^{\sigma'(y_1)\sigma'(y_2)}(K)) \\
        &= V(C^{\sigma(y_1)\sigma(y_2)}(K)) \cap V(C^{\sigma(y_2)\sigma(y_1)}(K)) = \emptyset,
    \end{align*}
    a contradiction. 
    The last sentence of the lemma follows from the definition of~${K^{\sigma(y_1)\sigma(y_2)}}$.
\end{proof}

Given a path~$P$ with endvertices~$s$ and~$t$ we say {\em the orientation of~$P$ from~$s$ to~$t$} to mean the total order~$\leq$ on the vertices of~$P$ where~${a \leq b}$ if and only if~$a$ lies on~${sPb}$, in this case we say that~$a$ \emph{lies before}~$b$. 
Note that every path with at least one edge has precisely two orientations. 

Now, we fix a transition function~$\sigma_{\text{max}}$ from the~$S_y$ to the~$R_i$ so that the path \linebreak ${P := K^{\sigma_{\text{max}}(y_1)\sigma_{\text{max}}(y_2)}}$ is as long as possible. 
We call~$P$ the {\em central} path of~$K$ and the orientation of~$P$ from~${\sigma_{\text{max}}(y_1)}$ to~${\sigma_{\text{max}}(y_2)}$ the {\em correct} orientation.

We first note that, for large enough families of rays almost all of the ray graph lies on the central path.

\begin{lemma}
    \label{l:quantativecentral}
    At most~$N$ vertices of~$K$ are not on the central path.
\end{lemma}

\begin{proof}
    By Remark~\ref{r:prescut} we have ${\sigma_{\text{max}}[V(C^{y_1y_2}(K_{X,Y}))] \subseteq V\left(C^{\sigma_{\text{max}}(y_1)\sigma_{\text{max}}(y_2)}(K)\right)}$. 
    If it were a proper subset, then we would be able to use Lemma~\ref{l:transitionpebble} to produce a transition function in which this path is longer. 
    So we must have ${\sigma_{\text{max}}[V(C^{y_1y_2}(K_{X,Y}))] = V(C^{\sigma_{\text{max}}(y_1)\sigma_{\text{max}}(y_2)}(K))}$ and similarly ${\sigma_{\text{max}}[V(C^{y_2y_1}(K_{X,Y}))] = V(C^{\sigma_{\text{max}}(y_2)\sigma_{\text{max}}(y_1)}(K))}$. 
    However, since~${y_1y_2}$ is a bridge, 
    ${|V(C^{y_1y_2}(K_{X,Y})) \cup V(C^{y_2y_1}(K_{X,Y}))|=N}$ and so at most~$N$ vertices of~$K$ are not on the central path.
\end{proof}

We call~$P$ the {\em central} path of~$K$ and the orientation of~$P$ from~$\sigma_{\text{max}}(y_1)$ to~$\sigma_{\text{max}}(y_2)$ the {\em correct} orientation. 
We note the following simple corollary, which will be useful in later work.

\begin{corollary}
    \label{c:corerays}
    For any~${i \in I}$ if ${\RG(\mathcal{R}) - i}$ has precisely two components, each of size at least~${N+1}$, then~$i$ is an inner vertex of the central path of~$\RG(\mathcal{R})$. 
\end{corollary}

\begin{proof}
    By Lemma~\ref{l:quantativecentral} both components of~${\RG(\mathcal{R}) - i}$ contain a vertex of the central path. 
    However, since all the edges of the central path are bridges, it follows that~$i$ lies between these two vertices on the central path.
\end{proof}

We can in fact determine the central path and its correct orientation by considering the possible transition functions from the~$S_y$ to the~$R_i$.

\begin{lemma}
    \label{l:correctpaths}
    For any two vertices~$v_1$ and~$v_2$ of~$K$, there exists a transition function ${\sigma \colon V(K_{X,Y}) \to V(K)}$ with ${\sigma(y_1) = v_1}$ and ${\sigma(y_2) = v_2}$ if and only if~$v_1$ and~$v_2$ both lie on~$P$, with~$v_1$ before~$v_2$. 
\end{lemma}

\begin{proof}
    The `if' direction is clear by applying Lemma~\ref{l:transitionpebble} to~$\sigma_{\text{max}}$.
    For the `only if' direction, we begin by setting~${c_1 = |V(C^{y_1y_2}(K_{X,Y}))|}$ and~${c_2 = |V(C^{y_2y_1}(K_{X,Y}))|}$. 
    We enumerate ${V(C^{y_1y_2}(K_{X,Y}))}$ as~${y_3 \ldots y_{c_1 + 2}}$ and~${V(C^{y_2y_1}(K_{X,Y}))}$ as~${y_{c_1 + 3} \ldots y_{c_1 + c_2 + 2}}$. 
    Then for any ${(N+2)}$-tuple ${(x_1, \ldots, x_{N+2})}$ of distinct vertices which is achievable in the \linebreak ${(\sigma_{\text{max}}(y_1), \ldots, \sigma_{\text{max}}(y_{N+2}))}$-pebble-pushing game on~$K$ we must have the following three properties, since they are preserved by any single move:
    \begin{itemize}
        \item $x_1$ and~$x_2$ lie on~$P$, with~$x_1$ before~$x_2$.
        \item ${\{x_3, \ldots, x_{c_1 + 2}\} \subseteq V(C^{x_1x_2}(K))}$.
        \item ${\{x_{c_1 + 3}, \ldots, x_{c_1 + c_2 + 2}\} \subseteq V(C^{x_2x_1}(K))}$.
    \end{itemize}
    
    Now let~$\sigma$ be any transition function from the~$S_y$ to the~$R_i$. 
    Let ${(x_1, \ldots, x_{N+2})}$ be an ${(N+2)}$-tuple achievable in the ${(\sigma_{\text{max}}(y_1), \ldots, \sigma_{\text{max}}(y_{N+2}))}$-pebble-pushing game such that~${\{x_1, \ldots, x_{N+2}\} = \sigma[Y]}$. 
    By Lemma~\ref{l:transitionpebble}, the function~$\sigma'$ sending~$y_i$ to~$x_i$ for each ${i \leq N+2}$ is also a transition function and~${\sigma'[Y] = \sigma[Y]}$. 
    Let~$\tau$ be a transition function from~${(R_i \colon i \in \sigma[Y])}$ to the~$S_y$. 
    Then, by Lemma~\ref{l:barepath}, both~${\tau \cdot \sigma}$ and~${\tau \cdot \sigma'}$ keep both~$y_1$ and~$y_2$ fixed. 
    Thus~${\sigma(y_1) = \sigma'(y_1) = x_1}$ and~${\sigma(y_2) = \sigma'(y_2) = x_2}$. 
    As noted above, this means that~$\sigma(y_1)$ and~$\sigma(y_2)$ both lie on~$P$ with~$\sigma(y_1)$ before~$\sigma(y_2)$, as desired.
\end{proof}

Thus the central path and the correct orientation depend only on our choice of~$y_1$ and~$y_2$. 
Hence, we get the following corollary.
\begin{corollary}
    \label{cor:correct}
    Each ray graph on at least~${N+3}$ vertices contains a unique central path with a correct orientation 
    and every transition function between two families of at least~${N+3}$ $\epsilon$-rays sends vertices of the central path to vertices of the central path and preserves the correct orientation.
\end{corollary}

\begin{proof}
    Consider the family~${\mathcal{R}=(R_i\colon i \in I)}$ with its ray graph~$K$ and another family ${\mathcal{T}=(T_j \colon j \in J)}$ of at least~${N+3}$ rays, with ray graph~$K_\mathcal{T}$, 
    and let~$\tau$ be a transition function from~$\mathcal{R}$ to~$\mathcal{T}$.
    
    Let~${v_1, v_2}$ be two vertices in the central path~$P$ of~$K$ with~$v_1$ before~$v_2$. 
    By Lemma~\ref{l:correctpaths} there is transition function~$\sigma$ from~${(S_y \colon y \in Y)}$ to~$\mathcal{R}$ with~${\sigma(y_1) = v_1}$ and~${\sigma(y_2) = v_2}$. 
    
    Then, as in Lemma~\ref{l:correctorientationcycletransition}, it is clear that~${\tau \cdot \sigma}$ is a transition function from ${(S_y \colon y \in Y)}$ to~$\mathcal{T}$. 
    However since~${\tau \cdot \sigma (y_1) = \tau(v_1)}$ and~${\tau \cdot \sigma ( y_2) = \tau(v_2)}$, it follows from Lemma~\ref{l:correctpaths} that~$\tau(v_1)$ and~$\tau(v_2)$ both lie on the central path~$P_\mathcal{T}$ of~$K_\mathcal{T}$ with~$\tau(v_1)$ before~$\tau(v_2)$, and hence~$\tau$ sends vertices of~$P$ to vertices of~$P_\mathcal{T}$ and preserves the correct orientation.
\end{proof}

\begin{lemma}
    Let~$\mathcal{R}$ and~$\mathcal{T}$ be families of disjoint rays, each of size at least~${N+3}$, and let~$\sigma$ be a transition function from~$\Rcal$ to~$\Tcal$. 
    Let~${x \in \RG(\mathcal{R})}$ be an inner vertex of the central path. 
    If~${v_1,v_2\in \RG(\mathcal{R})}$ lie in different components of~${\RG(\mathcal{R})-x}$, then~${\sigma(v_1)}$ and~${\sigma(v_2)}$ lie in different components of~${\RG(\Tcal)-\sigma(x)}$. 
    Moreover, $\sigma(x)$ is an inner vertex of the central path of~${\RG(\Tcal)}$.
\end{lemma}

\begin{proof}
    That~$\sigma(x)$ is an inner vertex of the central path of~$\RG(\Tcal)$ follows from Corollary~\ref{cor:correct}. 
    We note, by Lemma~\ref{l:raysubgraph}, given any family of rays $\mathcal{K}$ and a transition function~$\gamma$ from~$\mathcal{S}$ to~$\mathcal{K}$, 
    if~$y$ separates~$x$ from~$z$ in~$K_{X,Y}$ then~$\gamma(y)$ separates~$\gamma(x)$ from~$\gamma(z)$ in~${\RG(\mathcal{K})}$.
    
    Let~${\tau \colon V(K_{X,Y})\to V(\RG(\Rcal))}$ be a transition function with~${\tau(y_1)=x}$ which exists by Lemma~\ref{l:correctpaths}. 
    Since~$x$ is an inner vertex of the central path of~${\RG(\Rcal)}$, there are exactly two components of~${\RG(\Rcal)-x}$, one containing~$v_1$ and one containing~$v_2$. 
    Furthermore, by Lemma~\ref{l:raysubgraph}, it follows that~${\tau(C^{y_1y_2}(K_{X,Y}))}$ and~${\tau(C^{y_2y_1}(K_{X,Y} \cup \{y_2\})}$ are contained in different components of~${\RG(\Rcal)-x}$.
    
    Hence, by Lemma~\ref{l:transitionpebble} we may assume without loss of generality that~${v_1,v_2\in \tau(V(K_{X,Y}))}$, where~$y_1$ separates~${w_1:=\tau^{-1}(v_1)}$ and~${w_2:=\tau^{-1}(v_2)}$ in~$K_{X,Y}$.
    
    However, by the remark above applied to the transition function~${\sigma \cdot \tau}$ we conclude that~${\sigma(x) = \sigma \cdot \tau(y_1)}$ separates~${\sigma(v_1) = \sigma \cdot \tau(w_1)}$ from~${\sigma(v_2) = \sigma \cdot \tau (w_2)}$.
\end{proof}

\section{%
\texorpdfstring{$G$-tribes and concentration of $G$-tribes towards an end}%
{G-tribes and concentration of G-tribes towards an end}%
}
\label{s:tribes}

To show that a given graph~$G$ is $\preceq$-ubiquitous, we shall assume that~${n G\preceq \Gamma}$ holds for every~${n \in \N}$ an show that this implies~${\aleph_0 G\preceq \Gamma}$. 
To this end we use the following notation for such collections of~$nG$ in~$\Gamma$, most of which we established in~\cite{BEEGHPTI}.

\begin{definition}[\Gtribe s]
    Let $G$ and $\Gamma$ be graphs.
    \begin{itemize}
        \item A \emph{\Gtribe\ in~$\Gamma$ (with respect to the minor relation)} is a family~$\mathcal{F}$ of finite collections~$F$ of disjoint subgraphs~$H$ of~$\Gamma$ such that each \emph{member}~$H$ of~$\Fcal$ is an~$IG$.
        \item A \Gtribe\ $\mathcal{F}$ in~$\Gamma$ is called \emph{thick}, if for each~${n \in \mathbb{N}}$ there is a \emph{layer} ${F \in \mathcal{F}}$ with~${|F| \geq n}$; otherwise, it is called \emph{thin}.
        \item A \Gtribe\ $\mathcal{F}'$ in~$\Gamma$ is a \emph{\Gsubtribe}\footnote{When~$G$ is clear from the context we will often refer to a $G$-subtribe as simply a subtribe.} 
            of a \Gtribe\ $\mathcal{F}$ in~$\Gamma$, denoted by~${\Fcal' \preceq \Fcal}$, if there is an injection~${\Psi\colon\Fcal'\to \Fcal}$ such that for each~${F' \in \mathcal{F}'}$ there is an injection~${\varphi_{F'} \colon F' \to \Psi(F')}$ such that~${V(H') \subseteq V(\varphi_{F'}(H'))}$
            for each~${H' \in F'}$. 
            The \Gsubtribe\ $\Fcal'$ is called \emph{flat}, denoted by~${\Fcal' \subseteq \Fcal}$, if there is such an injection~$\Psi$ satisfying~${F' \subseteq \Psi(F')}$.
        \item A thick $G$-tribe $\mathcal{F}$ in~$\Gamma$ is \emph{concentrated at an end~$\epsilon$} of~$\Gamma$, if for every finite set~$X$ of vertices of~$\Gamma$, the $G$-tribe ${\Fcal_X = \{F_X \colon F \in \Fcal \}}$ consisting of the layers \linebreak ${F_X=\{H \in F: H \not\subseteq C(X,\epsilon)\}\subseteq F}$ is a thin subtribe of~$\Fcal$.
            It is \emph{strongly concentrated at~$\epsilon$} if additionally, for every finite vertex set~$X$ of~$\Gamma$, every member~$H$ of~$\Fcal$ intersects~${C(X,\epsilon)}$. 
    \end{itemize}
\end{definition}

We note that every thick $G$-tribe~$\Fcal$ contains a thick subtribe~$\Fcal '$ such that every~${H \in \bigcup\Fcal}$ is a tidy~$IG$. 
We will use the following lemmas from~\cite{BEEGHPTI}.

\begin{lemma}[Removing a thin subtribe, {\cite[Lemma~5.2]{BEEGHPTI}}]
    \label{l:removethin}
    Let~$\Fcal$ be a thick $G$-tribe in~$\Gamma$ and let~$\Fcal'$ be a thin subtribe of~$\Fcal$, witnessed by~${\Psi\colon \Fcal'\to \Fcal}$ and~${(\varphi_{F'} \colon F' \in \mathcal{F}')}$. 
    For~${F \in \Fcal}$, if~${F \in \Psi(\Fcal')}$, let~${\Psi^{-1}(F)=\{F'_F\}}$ and set~${\hat{F}=\varphi_{F'_F}(F'_F)}$. 
    If~${F \notin \Psi(\Fcal')}$, set~${\hat{F}=\emptyset}$. Then 
    \[
        \Fcal'':=\{F\setminus \hat{F}\colon F\in \Fcal\}
    \] 
    is a thick flat $G$-subtribe of~$\Fcal$.
\end{lemma}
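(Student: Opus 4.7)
The plan is to verify directly from the definitions that $\Fcal''$ satisfies the three required properties: that it is (i) a valid $G$-tribe in $\Gamma$, (ii) thick, and (iii) a flat $G$-subtribe of $\Fcal$.

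First I would bound the ``damage''. Since $\Fcal'$ is thin, by definition there exists $N \in \N$ such that $|F'| < N$ for every layer $F' \in \Fcal'$. Because $\varphi_{F'_F}$ is an injection into $F = \Psi(F'_F)$, this yields $|\hat F| = |F'_F| < N$ for every $F \in \Psi(\Fcal')$, and $|\hat F| = 0$ otherwise. So we never remove more than $N - 1$ members when passing from a layer of $\Fcal$ to the corresponding layer of $\Fcal''$.

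Next I would check that $\Fcal''$ is a $G$-tribe: each set $F \setminus \hat F$ is a finite sub-collection of $F$, so its elements are still pairwise disjoint $IG$s of $\Gamma$. Thickness then follows immediately from the size bound: since $\Fcal$ is thick, for every $n \in \N$ there is a layer $F \in \Fcal$ with $|F| \geq n + N$, and hence $|F \setminus \hat F| \geq |F| - N \geq n$. Finally, for the flat subtribe structure I would index the family $\Fcal'' = \{F \setminus \hat F \colon F \in \Fcal\}$ by $\Fcal$ in the obvious way, define $\Psi'' \colon \Fcal'' \to \Fcal$ to send $F \setminus \hat F$ back to $F$, and take $\varphi''_{F \setminus \hat F} \colon F \setminus \hat F \hookrightarrow F$ to be the inclusion. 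Injectivity of $\Psi''$ is automatic from the indexing, and the flatness condition $F \setminus \hat F \subseteq F$ is trivial.

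The proof is essentially a size estimate and nothing deeper is needed. The only point that requires a little care is to treat $\Fcal$ (and therefore $\Fcal''$) as a family indexed by an underlying index set rather than as a bare set of collections, so that $\Psi''$ is an honest injection even in the degenerate situation where two distinct layers of $\Fcal$ happen to produce identical truncated collections $F \setminus \hat F$. This bookkeeping issue aside, no real obstacle is anticipated.
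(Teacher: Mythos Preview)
The paper does not actually prove this lemma; it is quoted verbatim from \cite{BEEGHPTI} and used without argument. Your direct verification from the definitions is correct and is exactly the intended elementary argument: bound $|\hat F|$ uniformly using thinness of $\Fcal'$, deduce thickness of $\Fcal''$ by choosing layers of $\Fcal$ large enough to absorb that loss, and observe that $F\setminus\hat F \subseteq F$ gives the flat subtribe structure. Your remark about treating $\Fcal$ as an indexed family rather than a bare set is also the right way to handle the only subtlety.
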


\begin{lemma}[Pigeon hole principle for thick $G$-tribes, {\cite[Lemma~5.3]{BEEGHPTI}}]
    \label{Lem_finitechoice}
    Suppose for some~${k \in \N}$, we have a $k$-colouring~${c\colon \bigcup \mathcal{F} \to [k]}$ of the members of some thick $G$-tribe~$\mathcal{F}$ in~$\Gamma$. 
    Then there is a monochromatic, thick, flat $G$-subtribe~$\mathcal{F}'$ of~$\mathcal F$.
\end{lemma}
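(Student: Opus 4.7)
The plan is to combine two applications of the pigeonhole principle: first a finite pigeonhole \emph{within} each layer to extract a monochromatic sub-collection whose size is a constant fraction of the layer, and then an infinite pigeonhole \emph{across} layers to pin down one colour that witnesses thickness.

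First I would fix, for each layer $F \in \Fcal$ and each colour $i \in [k]$, the sub-collection $F^i := \{H \in F : c(H) = i\}$. Since $F = \bigsqcup_{i \in [k]} F^i$, there is some $i_F \in [k]$ with $|F^{i_F}| \geq |F|/k$. Because $\Fcal$ is thick, for every $n \in \N$ we can pick a layer $F_n \in \Fcal$ with $|F_n| \geq kn$, and hence a monochromatic sub-collection $F_n^{i_{F_n}} \subseteq F_n$ of size at least $n$. Note also that thickness forces $\Fcal$ to be an infinite family (a finite family of finite collections has bounded layer sizes), so by removing repetitions we may assume the indices of the layers $F_n$ chosen above are pairwise distinct in $\Fcal$.

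Next, since only finitely many colours are available, the infinite pigeonhole principle provides a single colour $i^* \in [k]$ and an infinite subset $N \subseteq \N$ with $i_{F_n} = i^*$ for every $n \in N$. Set $\Fcal' := (F_n^{i^*} : n \in N)$. By construction every member of every layer of $\Fcal'$ carries colour $i^*$, so $\Fcal'$ is monochromatic, and since $|F_n^{i^*}| \geq n$ for all $n \in N$ and $N$ is unbounded, $\Fcal'$ is thick.

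Finally, to witness flatness, I would define $\Psi \colon \Fcal' \to \Fcal$ by $\Psi(F_n^{i^*}) := F_n$; this is injective because the selected layers $F_n$ were arranged to be distinct indices of $\Fcal$. The containment $F_n^{i^*} \subseteq F_n$ together with the identity map $\varphi_{F_n^{i^*}} \colon F_n^{i^*} \hookrightarrow F_n$ then verifies that $\Fcal' \subseteq \Fcal$ in the sense of the definition, since $V(H) \subseteq V(H)$ holds trivially for each $H \in F_n^{i^*}$. I do not anticipate a real obstacle: the argument is a two-stage pigeonhole, and the only technical subtlety is keeping the selected layers distinct as indices of $\Fcal$ so that $\Psi$ is genuinely injective, which is guaranteed by thickness.
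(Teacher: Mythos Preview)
Your argument is correct: the two-stage pigeonhole (first within each layer to find a dominant colour class, then across layers to stabilise the colour) is exactly the natural proof, and you handle the flatness verification and the injectivity of $\Psi$ carefully. Note, however, that the present paper does not actually supply a proof of this lemma; it is quoted verbatim from \cite{BEEGHPTI} and used as a black box, so there is no in-paper argument to compare against. Your write-up is precisely the sort of routine proof one would expect the cited reference to contain.
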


Note that, in the following lemmas, it is necessary that~$G$ is connected, so that every member of the $G$-tribe is a connected graph.

\begin{lemma}[{\cite[Lemma~5.4]{BEEGHPTI}}]
    \label{l:concentrated}
    Let~$G$ be a connected graph and~$\Gamma$ a graph containing a thick $G$-tribe~$\mathcal{F}$. 
    Then either~${\aleph_0 G \preceq \Gamma}$, or there is a thick flat subtribe~$\Fcal'$ of~$\Fcal$ and an end~$\epsilon$ of~$\Gamma$ such that~$\Fcal'$ is concentrated at~$\epsilon$. 
\end{lemma}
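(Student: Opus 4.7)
The plan is to argue contrapositively: I will assume $\aleph_0 G \not\preceq \Gamma$ and produce a thick flat subtribe of $\Fcal$ concentrated at some end of $\Gamma$. The cornerstone observation is that, since $G$ is connected, every member $H \in \bigcup \Fcal$ is a connected subgraph of $\Gamma$; hence if $H$ is disjoint from a finite vertex set $X \subseteq V(\Gamma)$, it lies in a unique component of $\Gamma - X$. As distinct components of $\Gamma - X$ are vertex-disjoint, members residing in different components are automatically pairwise disjoint inflated copies of $G$. If, for some finite $X$, infinitely many components of $\Gamma - X$ each contained such a member, picking one per component would exhibit $\aleph_0 G \preceq \Gamma$, contradicting the assumption. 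Therefore, for every finite $X \subseteq V(\Gamma)$, only finitely many components of $\Gamma - X$ can contain a member of $\bigcup \Fcal$.

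Given this, for any finite $X$ I first apply Lemma~\ref{l:removethin} to discard the at most $|X|$ members of each layer that meet $X$ (these form a thin subtribe, since each layer consists of pairwise disjoint members), obtaining a thick flat subtribe. Coloring each remaining member by its host component of $\Gamma - X$ yields a finite coloring, and Lemma~\ref{Lem_finitechoice} extracts from it a monochromatic thick flat subtribe all of whose members lie in a single component $C$ of $\Gamma - X$. I then iterate this procedure: fix an enumeration (transfinite if needed) of the vertices, and recursively construct nested thick flat subtribes $\Fcal \supseteq \Fcal^0 \supseteq \Fcal^1 \supseteq \ldots$, increasing finite sets $X_0 \subseteq X_1 \subseteq \ldots$ whose union is cofinal among finite vertex sets, and nested components $C_0 \supseteq C_1 \supseteq \ldots$ of $\Gamma - X_0, \Gamma - X_1, \ldots$, with every member of $\Fcal^n$ lying in $C_n$. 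Each $C_n$ is infinite (it houses arbitrarily many pairwise disjoint nonempty subgraphs), and the nested sequence $(C_n)$ determines an end $\epsilon$ of $\Gamma$ with $C(X_n, \epsilon) = C_n$.

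The desired concentrated subtribe is then produced by diagonalization. For each $n \in \N$ select a layer $F_n \in \Fcal^n$ with $|F_n| \geq n$ arising from a distinct parent layer of $\Fcal$ (easily arranged by discarding the finitely many previously used parent layers at each step), and set $\Fcal' = \{F_n : n \in \N\}$. Then $\Fcal'$ is a thick flat subtribe of $\Fcal$. Given any finite $Y \subseteq V(\Gamma)$, pick $N$ with $Y \subseteq X_N$; for every $m \geq N$ each member of $F_m$ lies in $C_m \subseteq C_N \subseteq C(Y, \epsilon)$, so the subtribe $\Fcal'_Y$ of members not contained in $C(Y, \epsilon)$ comprises only the finitely many layers $F_n$ with $n < N$ and is therefore thin. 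Hence $\Fcal'$ is concentrated at $\epsilon$, as required.

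The main obstacle is the technical verification that the nested components $(C_n)$ genuinely correspond to an end of $\Gamma$ in the sense of the ray-equivalence definition; that is, that there exists a ray in $\Gamma$ with a tail in every $C_n$. The recipe is to choose $u_n \in C_n$ at each stage (with the $u_n$'s escaping every finite set, feasible since each $C_n$ is infinite), join $u_{n-1}$ to $u_n$ by a path inside $C_{n-1}$, and prune cycles from the concatenated walk. A secondary subtlety, in the uncountable case, is ensuring that the vertex enumeration is cofinal among finite subsets of $V(\Gamma)$; this can be handled by a transfinite recursion or, more efficiently, by restricting the entire argument to the countable subgraph spanned by some fixed thick flat subtribe of $\Fcal$, which is harmless since the minor relation is monotone under taking subgraphs.
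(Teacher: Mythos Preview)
This lemma is not proved in the present paper; it is simply quoted from the companion paper \cite{BEEGHPTI}. So there is no in-paper proof to compare your proposal against.

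Evaluating your argument on its own merits: the strategy is correct and is essentially the standard one. The key steps---only finitely many components of $\Gamma - X$ can host a member of the tribe (else one could pick one member per component and exhibit $\aleph_0 G \preceq \Gamma$), discard the thin subtribe of members meeting $X$ via Lemma~\ref{l:removethin}, pigeonhole into a single component via Lemma~\ref{Lem_finitechoice}, iterate along an increasing cofinal sequence of finite sets to obtain nested components and nested thick flat subtribes, extract a ray and hence an end $\epsilon$, and then diagonalise---are all sound, and your verification that the diagonal subtribe $\Fcal'$ is concentrated at $\epsilon$ is correct.

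The only place requiring care is the uncountable case. Your fix of passing to ``the countable subgraph spanned by some fixed thick flat subtribe'' tacitly assumes this subgraph is countable; that is guaranteed only when $G$ itself is countable (so that every tidy $IG$ has countable branch sets), not in general. Granting countable $G$, your reduction is valid: an end $\epsilon'$ of the countable host $\Gamma'$ determines an end $\epsilon$ of $\Gamma$, and since $C_{\Gamma'}\bigl(Y \cap V(\Gamma'),\, \epsilon'\bigr) \subseteq C_\Gamma(Y, \epsilon)$ for every finite $Y \subseteq V(\Gamma)$, concentration in $\Gamma'$ transfers to concentration in $\Gamma$. Every application of this lemma in the present paper has $G$ locally finite, hence countable, so nothing downstream is affected. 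For genuinely uncountable $G$ the ``transfinite recursion'' you allude to would need to be spelled out; the natural route is via a compactness argument on the inverse system of nonempty finite sets of components of $\Gamma - X$ hosting a thick subtribe, yielding a direction and hence an end, but extracting a single concentrated subtribe from that end still requires additional work beyond what you have indicated.
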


\begin{lemma}[{\cite[Lemma~5.5]{BEEGHPTI}}]
    \label{lem_subtribesinheritconcentration}
    Let~$G$ be a connected graph and~$\Gamma$ a graph containing a thick $G$-tribe~$\mathcal{F}$ concentrated at an end~$\epsilon$ of~$\Gamma$. 
    Then the following assertions hold: 
    \begin{enumerate}
        \item For every finite set~$X$, the component~${C(X,\epsilon)}$ contains a thick flat $G$-subtribe of~$\Fcal$.
        \item Every thick subtribe~$\Fcal'$ of~$\Fcal$ is concentrated at~$\epsilon$, too.
    \end{enumerate}
\end{lemma}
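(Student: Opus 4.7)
The plan is that both parts are essentially bookkeeping from the definitions, with (1) being an immediate consequence of Lemma~\ref{l:removethin} and (2) following from chasing the vertex-containment property in the definition of a subtribe.

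For part (1): Given a finite vertex set $X$, the concentration hypothesis gives that $\Fcal_X = \{F_X : F \in \Fcal\}$, with $F_X = \{H \in F : H \not\subseteq C(X,\epsilon)\}$, is a thin subtribe of $\Fcal$, and indeed a flat one witnessed by the tautological injection $\Psi \colon \Fcal_X \to \Fcal$ sending $F_X$ to $F$ together with the inclusion maps $\varphi_{F_X}\colon F_X \hookrightarrow F$. Applying Lemma~\ref{l:removethin} to this data yields a thick flat $G$-subtribe $\Fcal'' = \{F \setminus F_X : F \in \Fcal\}$ of $\Fcal$. But by definition $F \setminus F_X$ consists exactly of those members $H \in F$ with $H \subseteq C(X,\epsilon)$, so $\Fcal''$ is the desired thick flat subtribe with all members contained in $C(X,\epsilon)$.

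For part (2): Let $\Fcal' \preceq \Fcal$ be a thick subtribe, witnessed by an injection $\Psi\colon \Fcal' \to \Fcal$ and, for each $F' \in \Fcal'$, an injection $\varphi_{F'}\colon F' \to \Psi(F')$ with $V(H') \subseteq V(\varphi_{F'}(H'))$ for every $H' \in F'$. Fix a finite vertex set $X \subseteq V(\Gamma)$ and consider the corresponding layers $F'_X = \{H' \in F' : H' \not\subseteq C(X,\epsilon)\}$ of $\Fcal'_X$. The key observation is that the vertex-inclusion property forces $\varphi_{F'}$ to map $F'_X$ into $\Psi(F')_X$: if $H' \in F'$ uses a vertex $v \notin V(C(X,\epsilon))$, then $v \in V(\varphi_{F'}(H'))$ as well, so $\varphi_{F'}(H') \not\subseteq C(X,\epsilon)$. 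Hence the restrictions of $\Psi$ and $(\varphi_{F'})$ witness that $\Fcal'_X$ is a flat subtribe of $\Fcal_X$, and in particular $|F'_X| \leq |\Psi(F')_X|$ for every $F' \in \Fcal'$. Since $\Fcal_X$ is thin by assumption, there is a uniform bound on $|\Psi(F')_X|$, hence on $|F'_X|$, so $\Fcal'_X$ is thin as a subtribe of $\Fcal'$. This holds for every finite $X$, establishing that $\Fcal'$ is concentrated at $\epsilon$.

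There is no real obstacle here beyond unwinding the definitions; the only point that deserves any care is verifying in part~(2) that the vertex-containment clause in the definition of a subtribe interacts correctly with the side condition $H' \not\subseteq C(X,\epsilon)$, which is precisely what makes the thinness pull back from $\Fcal$ to $\Fcal'$.
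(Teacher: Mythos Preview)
The paper does not actually prove this lemma; it is quoted verbatim from \cite[Lemma~5.5]{BEEGHPTI} and used as a black box, so there is no in-paper proof to compare against. Your argument is the natural one and is correct, with one cosmetic slip: in part~(2) you say the restrictions of $\Psi$ and $(\varphi_{F'})$ witness that $\Fcal'_X$ is a \emph{flat} subtribe of $\Fcal_X$, but flatness would require $F'_X \subseteq \Psi(F')_X$ as sets, which need not hold since members of $F'$ are merely contained in members of $\Psi(F')$ on the vertex level. What you actually get (and all you need) is that $\Fcal'_X$ is a subtribe of $\Fcal_X$, giving the injection $F'_X \hookrightarrow \Psi(F')_X$ and hence the size bound; the rest of your argument goes through unchanged.
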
 

\begin{lemma}
    \label{l:strongconcentrated}
    Let~$G$ be a connected graph and~$\Gamma$ a graph containing a thick $G$-tribe~$\Fcal$ concentrated at an end~${\epsilon \in \Omega(\Gamma)}$. 
    Then either~${\aleph_0 G \preceq \Gamma}$, 
    or there is a thick flat subtribe of~$\Fcal$ which is strongly concentrated at~$\epsilon$. 
\end{lemma}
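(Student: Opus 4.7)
The plan is to partition the members of $\Fcal$ into ``good'' and ``bad'' according to whether they satisfy the additional condition in the definition of strong concentration, apply the pigeon-hole principle for $G$-tribes (Lemma~\ref{Lem_finitechoice}), and treat the two outcomes separately.

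First I would call a member $H \in \bigcup \Fcal$ \emph{good} if $V(H) \cap C(X,\epsilon) \neq \emptyset$ for every finite $X \subseteq V(\Gamma)$, and \emph{bad} otherwise; in the bad case some finite set $X_H \subseteq V(\Gamma)$ witnesses $V(H) \cap C(X_H,\epsilon) = \emptyset$. Applying Lemma~\ref{Lem_finitechoice} to this two-colouring of $\bigcup \Fcal$ produces a thick, flat, monochromatic $G$-subtribe $\Fcal' \subseteq \Fcal$, which by Lemma~\ref{lem_subtribesinheritconcentration}(2) remains concentrated at $\epsilon$.

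If every member of $\Fcal'$ is good, then $\Fcal'$ satisfies by construction the extra clause in the definition of strongly concentrated, so we are done. Otherwise every member of $\Fcal'$ is bad, and the plan is to exhibit $\aleph_0$ pairwise disjoint $IG$'s inside $\Gamma$. I would recursively choose $H_n \in \bigcup \Fcal'$ together with a witnessing finite set $X_n \subseteq V(\Gamma)$ such that, writing $Y_n := X_1 \cup \cdots \cup X_{n-1}$, the subgraph $H_n$ is contained in $C(Y_n,\epsilon)$ for each $n \geq 2$. The key observation is that by concentration of $\Fcal'$, the subtribe of members not contained in $C(Y_n,\epsilon)$ is thin, so Lemma~\ref{l:removethin} supplies a thick flat $G$-subtribe of $\Fcal'$ all of whose members lie inside $C(Y_n,\epsilon)$; any such member serves as $H_n$, and one then chooses $X_n$ witnessing badness of $H_n$. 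For every $i<n$ we have $H_n \subseteq C(Y_n,\epsilon) \subseteq C(X_i,\epsilon)$ while $V(H_i) \cap C(X_i,\epsilon) = \emptyset$, so $H_n$ is disjoint from $H_i$. The sequence $(H_n)_{n \in \N}$ then witnesses $\aleph_0 G \preceq \Gamma$.

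The only point requiring care is the recursive step: one has to check that removing the thin subtribe of ``bad-direction'' members at each stage leaves a thick subtribe that is itself concentrated at $\epsilon$, so that the recursion continues for every $n$. Both facts are immediate from Lemmas~\ref{l:removethin} and~\ref{lem_subtribesinheritconcentration}, so no genuine obstacle is expected beyond bookkeeping.
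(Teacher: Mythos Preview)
Your argument is correct. The recursive construction of pairwise disjoint $IG$'s in the ``bad'' case is exactly the one the paper uses, with the same appeal to Lemma~\ref{lem_subtribesinheritconcentration} to keep finding members inside $C(Y_n,\epsilon)$.

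The only difference is in packaging: the paper skips the pigeon-hole step entirely. Instead of first passing to a monochromatic subtribe, it argues by contradiction---assuming \emph{no} thick flat subtribe of $\Fcal$ is strongly concentrated---and at each stage uses this assumption applied to the thick flat subtribe $\Fcal'_n \subseteq C(X_n,\epsilon)$ (given by Lemma~\ref{lem_subtribesinheritconcentration}(1)) to locate the next member $H_{n+1}$ disjoint from some $C(X_{n+1},\epsilon)$. Your colouring via Lemma~\ref{Lem_finitechoice} makes the witnessing sets $X_H$ available up front, which is tidy, but the paper's route is marginally shorter since it avoids invoking the pigeon-hole lemma at all. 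Neither approach has any real advantage over the other.
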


\begin{proof}
    Suppose that no thick flat subtribe of~$\Fcal$ is strongly concentrated at~$\epsilon$. 
    We construct an~${\aleph_0 G \preceq \Gamma}$ by recursively choosing disjoint $IG$s ${H_1, H_2, \ldots}$ in~$\Gamma$ as follows: 
    Having chosen~${H_1, H_2, \ldots, H_n}$ such that for some finite set~$X_n$ we have 
    \[
        H_i \cap C(X_n, \epsilon) = \emptyset
    \]
    for all~${i \in [n]}$, then by Lemma~\ref{lem_subtribesinheritconcentration}(1), there is still a thick flat subtribe~$\mathcal{F}'_n$ of~$\Fcal$ contained in~${C(X_n,\epsilon)}$. 
    Since by assumption, $\mathcal{F}'_n$ is not strongly concentrated at~$\epsilon$, we may pick~${H_{n+1}\in \mathcal{F}'_n}$ and a finite set~${X_{n+1} \supseteq X_n}$ with~${H_{n+1} \cap C(X_{n+1},\epsilon) =\emptyset}$. 
    Then the union of all the~$H_i$ is an~${\aleph_0 G \preceq \Gamma}$.
\end{proof}

The following lemma will show that we can restrict ourselves to thick $G$-tribes which are concentrated at thick ends.

\begin{lemma}
    \label{l:concentratedatthin}
    Let~$G$ be a connected graph and~$\Gamma$ a graph containing a thick $G$-tribe~$\Fcal$ concentrated at an end~${\epsilon \in \Omega(\Gamma)}$ which is thin. 
    Then~${\aleph_0 G \preceq \Gamma}$. 
\end{lemma}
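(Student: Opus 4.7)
The plan is to use the thinness of $\epsilon$ as the source of contradiction: I will show that a tribe strongly concentrated at a thin end either forces $\Gamma$ to be dominated at $\epsilon$ by infinitely many vertices (giving a $K_{\aleph_0}$ minor in $\Gamma$, hence the $\aleph_0 G$ minor since $G$ is countable) or produces strictly more disjoint $\epsilon$-rays than $\deg(\epsilon)$ allows. Throughout, by Lemma~\ref{l:strongconcentrated} I may assume, passing to a thick flat subtribe, that $\Fcal$ is strongly concentrated at $\epsilon$. Working with tidy representatives, every member $H$ is connected (since $G$ is) and must be infinite: were $H$ finite, taking $X := V(H)$ would yield $H \cap C(X,\epsilon) = \emptyset$, contradicting strong concentration.

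The key step is the claim that every such $H$ contains either a ray converging to $\epsilon$ or a vertex of $\Gamma$ dominating $\epsilon$. To prove it, fix a sequence $X_1 \subseteq X_2 \subseteq \cdots$ of finite vertex sets with $C(X_{n+1},\epsilon) \subsetneq C(X_n,\epsilon)$, and pick pairwise distinct vertices $v_n \in H \cap C(X_n,\epsilon)$, possible by strong concentration. Apply the star-comb lemma in the connected graph $H$ with $U = \{v_n : n \in \N\}$. In the comb case, the spine is a ray $R \subseteq H$ converging to some end $\omega$ of $\Gamma$; if $\omega \neq \epsilon$, a finite set $Y$ separating $\omega$ from $\epsilon$ would have to absorb cofinitely many of the pairwise disjoint teeth (since the teeth endpoints eventually lie in $C(Y,\epsilon)$ while a tail of $R$ lies in $C(Y,\omega)$), which is impossible; so $\omega = \epsilon$ and $R$ is the desired $\epsilon$-ray. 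In the subdivided-star case, the centre $v \in H$ admits infinitely many internally disjoint paths $Q_k$ in $\Gamma$ ending at $v_{n_k}$; for any finite $X \not\ni v$ the endpoints $v_{n_k}$ are eventually in $C(X,\epsilon)$, while the $Q_k$ cannot all cross the finite set $X$, so some $Q_k$ witnesses a $v$-$C(X,\epsilon)$ path in $\Gamma - X$. By standard end theory this means $v$ dominates $\epsilon$.

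With the key claim in hand I split into cases. If infinitely many vertices of $\Gamma$ dominate $\epsilon$, then Proposition~\ref{p:infdomTKaleph} supplies a subdivided $K_{\aleph_0}$ in $\Gamma$, so $K_{\aleph_0} \preceq \Gamma$, and consequently $\aleph_0 G \preceq \Gamma$. Otherwise the set $D = \{w_1,\dots,w_r\}$ of all vertices of $\Gamma$ dominating $\epsilon$ is finite; since members of any single layer $F$ are pairwise disjoint, at most $r$ of them can meet $D$. Letting $d$ be the finite degree of $\epsilon$ and choosing, by thickness of $\Fcal$, a layer $F$ with $|F| \geq r + d + 1$, I obtain at least $d+1$ members of $F$ containing no dominator of $\epsilon$; by the key claim each of these contains an $\epsilon$-ray, and pairwise disjointness of the members forces these $d+1$ rays to be pairwise disjoint, contradicting $\deg(\epsilon) = d$. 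Hence the first case must hold and $\aleph_0 G \preceq \Gamma$. The main obstacle is the key step: ruling out that the spine produced by star-comb converges to an unrelated end, and showing that the star's centre really dominates $\epsilon$, both rest on the standard counting principle that a finite set cannot absorb infinitely many pairwise disjoint paths, and on choosing the exhausting sequence $(X_n)$ carefully so that the selected $v_n$ are genuinely forced into nested, ever-deeper $\epsilon$-components of $\Gamma$.
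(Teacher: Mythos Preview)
Your approach differs from the paper's. Both begin by passing to a strongly concentrated subtribe via Lemma~\ref{l:strongconcentrated}. The paper then deletes the finitely many vertices dominating $\epsilon$ (a thin end can have only finitely many), invokes \cite[Corollary~5.5]{GH18} to obtain nested separators $S_n$ of size $k$ (the degree of $\epsilon$) with $\bigcap_n C(S_n,\epsilon)=\emptyset$, and derives a direct pigeonhole contradiction: in any layer of size exceeding $k$, each member is connected and meets both $C(S_N,\epsilon)$ and its complement for a suitable common $N$, hence meets the $k$-element set $S_N$. You instead use the star--comb lemma to extract from each member either a dominating vertex or an $\epsilon$-ray, and then exhibit more disjoint $\epsilon$-rays than the degree of $\epsilon$ permits.

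There is, however, a real gap in your star--comb step. Both the comb conclusion (spine tends to $\epsilon$) and the star conclusion (centre dominates $\epsilon$) require that for \emph{every} finite $Y\subseteq V(\Gamma)$ one eventually has $v_n\in C(Y,\epsilon)$, which amounts to $\bigcap_n C(X_n,\epsilon)=\emptyset$. Your phrase ``exhausting sequence'' suggests $\bigcup_n X_n=V(\Gamma)$, but $\Gamma$ need not be countable, and a sequence with merely $C(X_{n+1},\epsilon)\subsetneq C(X_n,\epsilon)$ can have nonempty intersection (any dominating vertex not in $\bigcup_n X_n$ would lie in every $C(X_n,\epsilon)$, for instance). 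The remedy is precisely the external result the paper uses: after deleting the finitely many dominators, \cite[Corollary~5.5]{GH18} supplies the needed sequence. This also renders your ``infinitely many dominators'' case vacuous, so the unjustified assumption there that $G$ is countable becomes harmless. With this citation in place your proof is correct, though longer than the paper's; its merit is that it makes the rays witnessing the contradiction explicit.
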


\begin{proof}
    Since~$\epsilon$ is thin, we may assume by Proposition~\ref{p:infdomTKaleph} that only finitely many vertices dominate~$\epsilon$.
    Deleting these yields a subgraph of~$\Gamma$ in which there is still a thick $G$-tribe concentrated at~$\epsilon$.
    Hence we may assume without loss of generality that~$\epsilon$ is not dominated by any vertex in~$\Gamma$.
    
    Let~${k \in \mathbb{N}}$ be the degree of~$\epsilon$.
    By~\cite[Corollary~5.5]{GH18} there is a sequence of
    vertex sets~${(S_n \colon n \in \mathbb{N})}$ such that:
    \begin{itemize}
        \item ${|S_n|=k}$,
        \item ${C(S_{n+1},\epsilon) \subseteq C(S_n,\epsilon)}$, and 
        \item ${\bigcap_{n \in \mathbb{N}} C(S_n,\epsilon) = \emptyset}$.
    \end{itemize}
    
    Suppose there is a thick subtribe~$\Fcal'$ of~$\Fcal$ which is strongly concentrated at~$\epsilon$. 
    For any~${F \in \Fcal'}$ there is an~${N_F \in \mathbb{N}}$ such that~${H \setminus C(S_{N_F},\epsilon) \neq \emptyset}$ for all~${H \in F}$ by the properties of the sequence. 
    Furthermore, since~$\Fcal'$ is strongly concentrated, ${H \cap C(S_{N_F},\epsilon) \neq \emptyset}$ as well for each~${H \in F}$. 
    
    Let~${F \in \Fcal'}$ be such that~${|F| > k}$. 
    Since~$G$ is connected, so is~$H$, and so from the above it follows that~${H \cap S_{N_F} \neq \emptyset}$ for each~${H \in F}$, contradicting the fact that~${|S_{N_F}| = k < |F|}$. 
    Thus~${\aleph_0 G \preceq \Gamma}$ by Lemma~\ref{l:strongconcentrated}.
\end{proof}

Note that, whilst concentration is hereditary for subtribes, strong concentration is not. 
However if we restrict to \emph{flat} subtribes, then strong concentration is a hereditary property.

Let us show see how ends of the members of a strongly concentrated tribe relate to ends of the host graph~$\Gamma$. 
Let~$G$ be a connected graph and~${H \subseteq \Gamma}$ an~$IG$. 
By Lemmas~\ref{l:connraygraph} and~\ref{l:rayinducedsubgraph}, if~${\omega \in \Omega(G)}$ and~$R_1$ and~${R_2 \in \omega}$ then the pullbacks~${H^{\downarrow}(R_1)}$ and~${H^{\downarrow}(R_2)}$ belong to the same end~${\omega' \in \Omega(\Gamma)}$. 
Hence, $H$ determines for every end~${\omega \in G}$ a \emph{pullback end}~${H(\omega) \in \Omega(\Gamma)}$.
The next lemma is where we need to use the assumption that~$G$ is locally finite.

\begin{lemma}
    \label{l:locfincon}
    Let~$G$ be a locally finite connected graph and~$\Gamma$ a graph containing a thick $G$-tribe~$\Fcal$ strongly concentrated at an end~${\epsilon \in \Omega(\Gamma)}$, where every member is a tidy~$IG$. 
    Then either~${\aleph_0 G \preceq \Gamma}$, or there is a flat subtribe~$\Fcal'$ of~$\Fcal$ such that for every~${H \in \bigcup \Fcal'}$ there is an end~${\omega_H \in \Omega(G)}$ such that~${H(\omega_H) = \epsilon}$. 
\end{lemma}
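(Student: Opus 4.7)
My plan is to show that under the hypotheses every member $H$ of $\Fcal$ already contains a ray converging to $\epsilon$ in $\Gamma$; projecting such a ray through the branch-map $\varphi_H$ then yields an end $\omega_H\in\Omega(G)$ with $H(\omega_H)=\epsilon$. In fact I expect $\Fcal'=\Fcal$ to work, so the $\aleph_0 G\preceq\Gamma$ alternative in the statement turns out not to be needed. The hard step is producing the $\epsilon$-ray inside $H$; projecting back to $G$ is then routine.

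First I would record two preliminary facts. Since $G$ is locally finite and $H$ is tidy, each branch set $H(v)$ is a finite tree with at most $d_G(v)$ leaves, so $H$ itself is locally finite (each vertex has degree at most $2d_G(\varphi_H(v))$). Next, strong concentration upgrades to: $H\cap C(X,\epsilon)$ is \emph{infinite} for every finite $X\subseteq V(\Gamma)$. Indeed, if it were a finite set $F$, then $X':=X\cup F$ would give $H\cap C(X',\epsilon)\subseteq(H\cap C(X,\epsilon))\setminus F=\emptyset$, contradicting strong concentration at~$X'$.

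To find an $\epsilon$-ray in $H$ I apply the Star–Comb Lemma. I may assume $\Gamma$ is countable by passing to a countable subgraph containing $H$ and a fixed $\epsilon$-ray of $\Gamma$, which suffices to house all separators we need. Enumerate $V(\Gamma)=\{u_1,u_2,\ldots\}$, set $Y_n:=\{u_1,\ldots,u_n\}$, and pick distinct $p_n\in H\cap C(Y_n,\epsilon)$ using the infinite intersection established above. With $U:=\{p_n\}$, the Star–Comb Lemma applied to the locally finite $H$ cannot produce a subdivided infinite star (its centre would have infinite degree in $H$), so we get a comb with spine $R^*$ and infinitely many internally disjoint finite teeth with distinct leaves in $U$. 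Suppose the spine lay in an end $\epsilon''\neq\epsilon$ of $\Gamma$, and let $X\subseteq V(\Gamma)$ be a finite separator of $\epsilon$ and $\epsilon''$. Then $X\subseteq Y_n$ for all large~$n$, so cofinitely many $p_n$ lie in $C(X,\epsilon)$; meanwhile cofinitely many spine-vertices lie in $C(X,\epsilon'')$. Any tooth joining such a spine-vertex to such a leaf must cross $X$ in its interior, but the teeth are internally disjoint, so at most $|X|$ of them can cross, contradicting their infinite number. Hence $R^*$ is an $\epsilon$-ray in $H$.

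Finally, given the $\epsilon$-ray $R^*=w_1w_2\ldots$ in $H$, set $v_i:=\varphi_H(w_i)$. Consecutive $v_i$'s are equal or $G$-adjacent, and because each branch set is finite, the sequence $(v_i)$ takes infinitely many distinct values, so $G[\{v_i:i\in\N\}]$ is an infinite, connected, locally finite subgraph of $G$ and hence contains a ray $R'$ by Proposition~\ref{p:rayorinfvertex}. Let $\omega_H$ be the end of $G$ containing $R'$. By the lemma at the end of Section~\ref{s:prelim}, the pullback $H(R')$ is a ray in $H$; and for each of the infinitely many $v\in V(R')\subseteq\{v_i\}$, the connected branch set $H(v)$ meets both $R^*$ and $H(R')$, so we can choose an internal path in $H(v)$ between them. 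As distinct branch sets are disjoint, this produces infinitely many pairwise disjoint paths between $R^*$ and $H(R')$ in $\Gamma$, placing them in the same end, namely~$\epsilon$; hence $H(\omega_H)=\epsilon$. The main obstacle is the Star–Comb step: certifying that the spine lies in~$\epsilon$ rather than in some competing end, which is what forces the countability reduction and the careful choice of $U$ so that every possible separator is eventually absorbed by the enumeration.
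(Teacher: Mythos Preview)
Your overall architecture matches the paper's: establish that each member $H$ contains an $\epsilon$-ray, then project through $\varphi_H$ to obtain $\omega_H$. The projection step is essentially identical. Where you diverge is in how you obtain the $\epsilon$-ray. The paper does a case split: if infinitely many vertices dominate $\epsilon$ then $K_{\aleph_0}\leq\Gamma$ and the $\aleph_0G\preceq\Gamma$ alternative fires; otherwise only finitely many members per layer can contain a dominating vertex, these are discarded to form $\Fcal'$, and then the standard dichotomy ``$H$ contains an $\epsilon$-ray or a vertex dominating $\epsilon$'' yields the ray. You instead exploit that $H$ itself is locally finite (from tidiness plus local finiteness of $G$) and apply Star--Comb directly, arguing that $\Fcal'=\Fcal$ already works and the first alternative is never needed. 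This is a genuine and attractive simplification: local finiteness of $H$ really does force the $\epsilon$-ray to exist regardless of whether some vertex of $H$ happens to dominate $\epsilon$ in the ambient graph $\Gamma$.

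However, your countability reduction has a gap. Passing to a countable $\Gamma'\supseteq H\cup S$ does not by itself guarantee that $H\cap C_{\Gamma'}(Y,\epsilon')$ is nonempty for finite $Y\subseteq V(\Gamma')$: vertices of $H$ that lie in $C_\Gamma(Y,\epsilon)$ need not be connected to a tail of $S$ \emph{within} $\Gamma'-Y$, so your choice of the $p_n$ may fail in $\Gamma'$. The phrase ``which suffices to house all separators we need'' does not address this. One fix is to build $\Gamma'$ by a countable closure: iteratively add, for each finite $Y$ in the current stage and each $h\in H\cap C_\Gamma(Y,\epsilon)$, a path in $\Gamma-Y$ from $h$ to $S$. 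A cleaner fix avoids the reduction entirely: enumerate $V(H)=\{h_1,h_2,\ldots\}$ and, for each $n$, let $D_n$ be the (nonempty, finite) set of components $K$ of $H-\{h_1,\ldots,h_n\}$ such that $K\cap C_\Gamma(X,\epsilon)$ is infinite for every finite $X\supseteq\{h_1,\ldots,h_n\}$; K\"onig's lemma gives a nested sequence $K_1\supseteq K_2\supseteq\cdots$, and the resulting ray lies in $C_\Gamma(X,\epsilon)$ for every finite $X\subseteq V(\Gamma)$ because $K_n$ is connected in $\Gamma-X$ once $X\cap V(H)\subseteq\{h_1,\ldots,h_n\}$ and meets $C_\Gamma(X,\epsilon)$ by construction.
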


\begin{proof}
    Since~$G$ is locally finite and every~${H \in \bigcup\Fcal}$ is tidy, the branch sets~${H(v)}$ are finite for each~${v \in V(G)}$. 
    If~$\epsilon$ is dominated by infinitely many vertices, then~${\aleph_0 G \preceq \Gamma}$ by Proposition~\ref{p:infdomTKaleph}, since every locally finite connected graph is countable. 
    If this is not the case, then there is some~${k \in \mathbb{N}}$ such that~$\epsilon$ is dominated by~$k$ vertices and so for every~${F \in \Fcal}$ at most~$k$ of the~${H \in F}$ contain vertices which dominate~$\epsilon$ in~$\Gamma$. 
    Therefore, there is a thick flat subtribe~$\Fcal'$ of~$\Fcal$ such that no~${H \in \bigcup\Fcal'}$ contains a vertex dominating~$\epsilon$ in~$\Gamma$. 
    Note that~$\Fcal'$ is still strongly concentrated at~$\epsilon$, and every branch set of every~${H \in \bigcup\Fcal'}$ is finite. 
    
    Since~$\Fcal'$ is strongly concentrated at~$\epsilon$, for every finite vertex set~$X$ of~$\Gamma$, every~${H \in \bigcup\Fcal'}$ intersects~${C(X,\epsilon)}$. 
    By a standard argument, since~$H$ as a connected infinite graph does not contain a vertex dominating~$\epsilon$ in~$\Gamma$, instead~$H$ contains a ray~${R_H \in \epsilon}$. 
    
    Since each branch set~${H(v)}$ is finite, $R_H$ meets infinitely many branch sets. 
    Let us consider the subgraph~${K \subseteq G}$ consisting of all the edges~${(v,w)}$ such that~$R_H$ uses an edge between~$H(v)$ and~$H(w)$. 
    Note that, since there is a edge in~$H$ between~$H(v)$ and~$H(w)$ if and only if~${(v,w) \in E(G)}$, $K$ is well-defined and connected.
    
    $K$ is then an infinite connected subgraph of a locally finite graph, and as such contains a ray~$S_H$ in~$G$. Since the edges between~$H(v)$ and~$H(w)$, if they exist, were unique, it follows that the pullback~$H^{\downarrow}(S_H)$ of~$S_H$ has infinitely many edges in common with~$R_H$, and so tends to~$\epsilon$ in~$\Gamma$. 
    Therefore, if~$S_H$ tends to~$\omega_H$ in~$\Omega(G)$, then~${H(\omega_H) = \epsilon}$. 
\end{proof}

\section{Ubiquity of minors of the half-grid}\label{s:halfgrid}
Here, and in the following, we denote by $\Half$ the infinite, one-ended, cubic hexagonal half-grid (see Figure~\ref{f:hex}). The following theorem of Halin is one of the cornerstones of infinite graph theory.

\begin{figure}[ht]
    \center
    \includegraphics[scale=1.2]{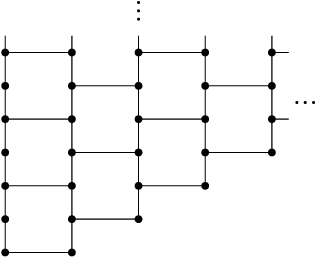}
    \caption{The hexagonal half-grid $\Half$.}
    \label{f:hex}
\end{figure}

\begin{theorem}[Halin, see {\cite[Theorem~8.2.6]{D16}}]
    \label{thm_Halin}
    Whenever a graph~$\Gamma$ contains a thick end, then~${\Half \leq \Gamma}$. \qed
\end{theorem}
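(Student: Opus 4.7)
The plan is to construct a subdivision of $\Half$ as a subgraph of $\Gamma$ by a direct recursive argument. Since the end $\epsilon$ witnessing that $\Gamma$ has a thick end has infinite degree, we fix at the outset an infinite family $(R_i : i \in \mathbb{N})$ of pairwise disjoint $\epsilon$-rays in $\Gamma$. Describe $\Half$ combinatorially as a sequence of pairwise disjoint one-way infinite ``column'' rays $C_0, C_1, C_2, \ldots$ together with a set of pairwise disjoint finite ``rung'' paths between consecutive columns, arranged so that each vertex of each $C_i$ is incident with exactly one rung, and so that the rungs from $C_{i-1}$ and from $C_{i+1}$ attach to $C_i$ in the alternating zig-zag pattern characteristic of the hexagonal lattice.

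The columns of the desired subdivision will be realised as tails of rays chosen from our family, and the rungs as paths in $\Gamma$ produced during the recursion. At stage $n$ we have selected finite initial segments of $n+1$ columns $S_0, \ldots, S_n$ (each an initial segment of some $R_{j(k)}$) together with every rung that the hexagonal template requires between them, such that the whole finite structure is pairwise disjoint. To pass to stage $n+1$ we proceed in three steps: first, extend each $S_k$ further along its underlying ray, past the attachment points where its next rungs will be required; second, pick a fresh ray from $(R_i : i \in \mathbb{N})$ disjoint from the finite set of vertices already used (possible since only finitely many $R_i$ have been touched), to serve as $S_{n+1}$; third, adjoin the new rungs that the hexagonal pattern demands between $S_n$ and $S_{n+1}$ at the prescribed positions on both.

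The essential structural input is that, since all our columns are tails of $\epsilon$-rays, between any two of them there are infinitely many pairwise disjoint paths, and so we may realise the new rungs avoiding any finite obstruction. Concretely, the Weak Linking Lemma (Lemma~\ref{l:weaklink}) applied to any finite family of $\epsilon$-rays and any finite forbidden vertex set $X$ yields a linkage after $X$ inducing any desired injective matching; routing each required new rung is a special case. Thus step (iii) is always executable.

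The main obstacle, and essentially the only nontrivial piece of bookkeeping, is to ensure the rungs attach at positions producing the correct alternating heights so that the resulting graph is genuinely a subdivision of $\Half$, rather than some other cubic object of the right cardinality. This is handled by the extensions in step (i): before each new rung is inserted on $S_k$ we first extend $S_k$ past all previously attached rungs, which is possible because only finitely many vertices of $S_k$ have been used so far. Taking the union of the nested finite subgraphs produced across all stages yields pairwise disjoint rays $S_0, S_1, \ldots$ in $\Gamma$ together with pairwise disjoint connecting paths in the exact hexagonal pattern of $\Half$, i.e.\ a subdivision of $\Half$ inside $\Gamma$, as required.
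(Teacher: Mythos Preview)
The paper does not prove this theorem; it is quoted as Halin's classical result with a reference to Diestel's textbook and marked with a \qedsymbol, so there is no in-paper argument to compare against.

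Your attempted proof has a genuine gap. At each stage you treat the finite set $X$ of vertices used so far as the only obstruction when routing new rungs, but this ignores the \emph{infinite} tails of the column rays $R_{j(0)}, \ldots, R_{j(n)}$ beyond the current finite segments $S_0, \ldots, S_n$. A rung path $P$ from $S_n$ to $S_{n+1}$ found avoiding $X$ may well pass through $R_{j(k)}$ at a vertex $v$ lying past the current end of $S_k$; then at a later stage your step (i) --- extending $S_k$ further along $R_{j(k)}$ --- would collide with $P$ at $v$. Nothing in your argument prevents this, and in general it cannot be prevented by any finite-avoidance condition: if, for instance, the ray graph of $(R_{j(k)}, R_{j(n)}, R_{j(n+1)})$ is the path with $R_{j(k)}$ in the middle, then \emph{every} sufficiently late path from $R_{j(n)}$ to $R_{j(n+1)}$ meets $R_{j(k)}$.

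A second, smaller issue: Lemma~\ref{l:weaklink} does \emph{not} produce a linkage inducing ``any desired injective matching''; it only guarantees \emph{some} linkage. Indeed, much of the present paper is devoted to analysing exactly when one can prescribe the matching. The actual proof of Halin's theorem does not fix the column rays in advance but constructs them adaptively together with the rungs, precisely to sidestep the obstruction above.
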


In~\cite{halin1975problem}, Halin used this result to show that every topological minor of~$\Half$ is ubiquitous with respect to the topological minor relation~$\leq$. 
In particular, trees of maximum degree~$3$ are ubiquitous with respect to~$\leq$.

However, the following argument, which is a slight adaptation of Halin's, shows that every connected minor of~$\Half$ is ubiquitous with respect to the minor relation. 
In particular, the dominated ray, the dominated double ray, and all countable trees are ubiquitous with respect to the minor relation.

The main difference to Halin's original proof is that, since he was only considering locally finite graphs, he was able to assume that the host graph~$\Gamma$ was also locally finite. 

We will need the following result of Halin.

\begin{lemma}[{\cite[(4) in Section 3]{halin1975problem}}]
    \label{lem_halfgridubiquity}
    ${\aleph_0 \Half}$ is a topological minor of~$\Half$.
\end{lemma}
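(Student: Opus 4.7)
The strategy is a self-similarity argument: first construct two vertex-disjoint subdivisions of $\Half$ inside $\Half$, and then iterate this doubling to obtain countably many pairwise disjoint subdivisions.

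For the doubling step, I would exploit the planar structure of $\Half$. View $\Half$ as a hexagonal tiling of the closed upper half-plane, with the zig-zag boundary ray $B$ running along the top. Fix a vertex $v \in B$ and select an infinite ``vertical'' path $P \subseteq \Half$ starting at $v$ and descending through a column of hexagons toward the end of $\Half$. The path $P$ separates $\Half$ into two closed regions $L$ (to the left of $P$) and $R$ (to the right), which meet exactly along $V(P)$. Each of $L$ and $R$, viewed on its own, is a ``hexagonal quarter-grid,'' and a short check shows that such a quarter-grid contains a subdivision of $\Half$. The remaining issue is that $L$ and $R$ share all the vertices of $P$. To remedy this, I would assign the vertices of $P$ alternately to $L$ and to $R$, using the cubic structure: each interior vertex $p$ of $P$ has exactly one neighbour outside $P$, lying either in $L$ or in $R$, and we route the incident hexagons through this unique neighbour. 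After this reshuffling, the two resulting subgraphs $\Half_1$ and $\Half_2$ are vertex-disjoint and each is a subdivision of $\Half$.

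For the iteration, set $\Half_1$ aside. Since $\Half_2$ is itself a subdivision of $\Half$, the doubling argument applies inside $\Half_2$ and produces two vertex-disjoint subdivisions of $\Half$ inside $\Half_2$; set one of them aside and continue inside the other. Carrying out this recursion produces an infinite sequence of pairwise vertex-disjoint subdivisions of $\Half$ in $\Half$, which is exactly the desired topological embedding $\aleph_0 \Half \leq \Half$.

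The hard part is the doubling. The intuitive picture of slicing $\Half$ along a ray is clear, but one needs to verify carefully that after the alternating assignment of the vertices of $P$ each half genuinely is a \emph{subdivision} of $\Half$ and not merely a subgraph lacking some vertices of degree $3$. The cubic hexagonal structure is what makes this clean: every internal vertex has degree exactly $3$, so the choice of which side a boundary vertex of $P$ is routed into is essentially forced, and the resulting branch vertices and subdivision paths fit together unambiguously.
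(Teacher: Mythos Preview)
The paper does not supply its own proof of this lemma; it is simply quoted from Halin's 1975 paper. Your split-and-iterate strategy is the standard argument and is essentially Halin's.

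One step does not work as written. Under your alternating assignment, a vertex $p\in P$ placed into $L$ (because its unique off-$P$ neighbour $q$ lies in $L$) ends up with only $q$ as a neighbour in the resulting subgraph $\Half_1$: its two $P$-neighbours have been sent to the other side. Thus $p$ becomes a pendant, and since $\Half$ has no vertices of degree~$1$, the graph $\Half_1$ cannot be a subdivision of $\Half$. The remedy is to drop this step altogether. You do not need a partition of $V(\Half)$, only two vertex-disjoint subdivisions; so take $\Half_1$ inside $L\setminus P$ and $\Half_2$ inside $R$, or assign all of $P$ to one side. The vertices along the freshly exposed boundary then have degree~$2$ and serve as subdivision vertices, which is exactly what one wants. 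With this adjustment your argument goes through, and the remaining claim that a hexagonal quarter-grid contains a subdivision of $\Half$ really is a short check (fold the half-plane onto a quarter-plane along a vertical ray).
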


\halfgrid*
\begin{proof}
    Suppose~${G \preceq \mathbb{N} \square \mathbb{Z}}$ is a minor of the half-grid, and~$\Gamma$ is a graph such that~${n G \preceq \Gamma}$ for each~${n \in \mathbb{N}}$. 
    By Lemma~\ref{l:concentrated} we may assume there is an end~$\epsilon$ of~$\Gamma$ and a thick $G$-tribe~$\Fcal$ which is concentrated at~$\epsilon$. 
    By Lemma~\ref{l:concentratedatthin} we may assume that~$\epsilon$ is thick.
    Hence~${\Half \leq \Gamma}$ by Theorem~\ref{thm_Halin}, and with Lemma~\ref{lem_halfgridubiquity} we obtain
    \[
        \aleph_0 G \preceq \aleph_0 (\mathbb{N} \square \mathbb{Z}) \preceq \aleph_0 \Half \leq \Half \leq \Gamma. \qedhere 
    \]
\end{proof}

\begin{lemma}
    \label{lemma_treeembeddinghalfgrid} 
    $\Half$ contains every countable tree as a minor.
\end{lemma}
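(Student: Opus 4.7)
The plan is to factor the embedding through the infinite binary tree $T_2$, by which I mean the rooted tree in which the root has degree $2$ and every other vertex has degree $3$. If a subdivision of $T_2$ appears as a subgraph of $\Half$ and every countable tree is a minor of $T_2$, then the lemma follows. So I would split the argument into these two steps.

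For the first step, I would exhibit a subdivision of $T_2$ directly inside $\Half$ using its standard drawing (Figure~\ref{f:hex}). Starting from a boundary vertex, declared the root, one recursively carves out a subdivided binary tree: at each current branching vertex use two of the three available cubic directions, extend along sufficiently long paths into the interior, and only then branch again. Choosing the path lengths at each level $n$ large enough ensures that the disjoint subtrees to be placed at subsequent levels live in disjoint, still one-ended cubic, remainders of $\Half$. This is the concrete, picture-driven part of the argument.

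For the second step, let $T_\omega$ denote the countable tree in which every vertex has countably infinite degree. A standard greedy argument shows that every countable tree embeds as a subgraph of $T_\omega$: after rooting both trees, embed level-by-level by assigning as-yet-unused children of the image at each step. So it suffices to realize $T_\omega$ as a minor of $T_2$. For this I would use the observation that any ray $R$ starting at the root of $T_2$ has, at each of its cubic interior vertices, a third edge leaving $R$ into a disjoint subtree that is itself isomorphic to a binary tree. Hence $R$ can serve as the branch set of the root of $T_\omega$, providing $\aleph_0$ pairwise-disjoint hanging binary subtrees in which to recursively place branch sets for the root's $\aleph_0$ children.

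The main obstacle is the first step, namely producing a concrete subdivided binary tree inside $\Half$; once that is in hand, the iterated double-greedy construction in the second step is routine.
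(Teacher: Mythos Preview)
Your proposal is correct and follows exactly the same route as the paper: factor as $T \subseteq T_\omega \preceq T_2 \leq \Half$, where the paper calls your $T_\omega$ by the name $T_\infty$. The paper simply declares each of these three steps ``easy to see'' or ``obvious'' without further detail, so your write-up is in fact more explicit than the original.
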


\begin{proof}
    It is easy to see that the infinite binary tree~$T_2$ embeds into~$\Half$ as a topological minor. 
    It is also easy to see that countably regular tree~$T_\infty$ where every vertex has infinite degree embeds into~$T_2$ as a minor. 
    And obviously, every countable tree~$T$ is a subgraph of~$T_\infty$. 
    Hence we have
    \[ 
        T \subseteq T_\infty \preceq T_2 \leq \Half  
    \]
    from which the result follows.
\end{proof}

\begin{corollary}
    \label{cor_trees}
    All countable trees are ubiquitous with respect to the minor relation.
\end{corollary}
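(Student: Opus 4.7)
The plan is to reduce the statement directly to Theorem~\ref{t:halfgrid}. Since every tree is connected, and Theorem~\ref{t:halfgrid} asserts that every connected minor of the half grid $\mathbb{N} \square \mathbb{Z}$ is $\preceq$-ubiquitous, it suffices to show that every countable tree is a minor of $\mathbb{N} \square \mathbb{Z}$.

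For this, I would first observe that the hexagonal half grid $\Half$ is itself a topological minor of $\mathbb{N} \square \mathbb{Z}$: one can realise a subdivision of $\Half$ inside $\mathbb{N} \square \mathbb{Z}$ as the familiar ``brick wall'' subgraph obtained by retaining every horizontal edge together with only those vertical edges $(i,j)(i+1,j)$ for which, say, $i+j$ is even. The resulting subgraph has degree $3$ at every vertex incident with a retained vertical edge and degree $2$ at the remaining vertices; suppressing the degree-$2$ vertices yields a graph isomorphic to $\Half$. Hence $\Half \preceq \mathbb{N} \square \mathbb{Z}$. Combining this with Lemma~\ref{lemma_treeembeddinghalfgrid}, which already provides $T \preceq \Half$ for every countable tree $T$, we obtain the chain
\[
T \preceq \Half \preceq \mathbb{N} \square \mathbb{Z},
\]
so $T$ is a connected minor of $\mathbb{N} \square \mathbb{Z}$, and Theorem~\ref{t:halfgrid} finishes the proof.

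No genuine obstacle arises at this step: the statement is essentially a one-line corollary of Theorem~\ref{t:halfgrid} and Lemma~\ref{lemma_treeembeddinghalfgrid}. The only verification required is the elementary brick-wall embedding of $\Half$ as a topological minor of $\mathbb{N} \square \mathbb{Z}$, which is a routine explicit construction, and all of the real content—the ubiquity theorem for minors of the half grid, and the embedding of arbitrary countable trees into $\Half$—has already been established in the preceding results.
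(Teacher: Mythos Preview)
Your proposal is correct and follows essentially the same route as the paper: the paper's proof simply cites Lemma~\ref{lemma_treeembeddinghalfgrid} and Theorem~\ref{t:halfgrid} as an immediate consequence, and you have merely made explicit the elementary bridge $\Half \preceq \mathbb{N} \square \mathbb{Z}$ via the brick-wall embedding, which the paper treats as understood (the two half grids being minor-equivalent).
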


\begin{proof}
    This is an immediate consequence of Lemma~\ref{lemma_treeembeddinghalfgrid} and Theorem~\ref{t:halfgrid}.
\end{proof}

\section{Proof of main results}\label{s:nonlinear}

The following technical result contains most of the work for the proof of Theorem~\ref{t:nonlin}, but is stated so as to be applicable in a later paper~\cite{BEEGHPTIII}.

\begin{lemma}
    \label{l:linearsubtribe}
    Let~$\epsilon$ be a non-pebbly end of~$\Gamma$ and let~$\Fcal$ be a thick $G$-tribe such that for every~${H \in \bigcup \Fcal}$ there is an end~${\omega_H \in \Omega(G)}$ such that~${H(\omega_H) = \epsilon}$. 
    Then there is a thick flat subtribe~$\Fcal'$ of~$\Fcal$ such that~$\omega_H$ is linear for every~${H \in \bigcup \Fcal'}$. 
\end{lemma}

\begin{proof}
    Let~$\Fcal''$ be the flat subtribe of~$\Fcal$ given by~${\Fcal'' = \{F'' \colon F \in \Fcal\}}$ with
    \[
        F'' = \{ H \colon H \in F \text{ and } \omega_H \text{ is not linear} \}.
    \]
    Suppose for a contradiction that~$\Fcal''$ is thick. 
    Then, there is some~${F \in \Fcal}$ which contains~${k+2}$ disjoint~$IG$s, ${H_1, H_2, \ldots, H_{k+2}}$, where~$k$ is such that~$\epsilon$ is not $k$-pebble-win. 
    By assumption~$\omega_{H_i}$ is not linear for each~$i$, and so for each~$i$ there is a family of disjoint rays ${\{R^i_1, R^i_2, \ldots, R^i_{m_i} \}}$ in~$G$ tending to~$\omega_{H_i}$ whose ray graph in~$G$ is not a path. 
    Let 
    \[
        \Scal = (H_i^{\downarrow}(R^i_j) \colon i \in [k+2], j \in [m_i]).
    \]
    By construction,~$\Scal$ is a disjoint family of $\epsilon$-rays in~$\Gamma$, and by Lemmas~\ref{l:pullbackraygraph} and~\ref{l:rayinducedsubgraph}, ${\RG_{\Gamma}(\Scal)}$ contains disjoint subgraphs ${K_1,K_2, \ldots, K_{k+2}}$ such that ${K_i \cong \RG_G(R^i_j \colon j \in [m_i])}$. 
    However, by Corollary~\ref{c:pebblyendstructure}, there is a set~$X$ of vertices of size at most~$k$ such that~${\RG_{\Gamma}(\Scal) - X}$ is a bare path~$P$. 
    However, then some~${K_i \subseteq P}$ is a path, a contradiction.   
    
    Since~$\Fcal$ is the union of~$\Fcal''$ and~$\Fcal'$ where~${\Fcal' = \{F' \colon F \in \Fcal\}}$ with
    \[
        F' = \{ H \colon H \in F \text{ and } \omega_H \text{ is linear} \},
    \] 
    it follows that~$\Fcal'$ is thick.
\end{proof}

\nonlin*
\begin{proof}
    Let~$\Gamma$ be a graph such that~${nG \preceq \Gamma}$ holds for every~${n \in \mathbb{N}}$. 
    Hence, $\Gamma$ contains a thick $G$-tribe~$\Fcal$. 
    By Lemmas~\ref{l:concentrated} and~\ref{l:strongconcentrated} we may assume that~$\Fcal$ is strongly concentrated at an end~$\epsilon$ of~$\Gamma$ and so by Lemma~\ref{l:locfincon} we may assume that for every~${H \in \bigcup\Fcal}$ there is an end~${\omega_H \in \Omega(G)}$ such that~${H(\omega_H) = \epsilon}$. 
    
    Since~$\omega_H$ is not linear for each~${H \in \bigcup \Fcal}$, it follows by Lemma~\ref{l:linearsubtribe} that~$\epsilon$ is pebbly, 
    and hence by Corollary~\ref{c:pebblyubiq} ${\aleph_0 G \preceq \Gamma}$.
\end{proof}

\begin{figure}[ht]
    \center
    \begin{tikzpicture}[scale=.4]
\draw[help lines] (-11.25,-7.25) grid (11.25,11.25);

\tikzstyle{ray}=[->,thick]
\tikzstyle{connection}=[red,thick,dash pattern=on 5pt off 2pt]

\draw[ray] (0,1) -- (0, 11.75);
\draw[ray] (1,1) |- (3,5) -- (3,11.75);
\draw[ray] (1,0) -- (4, 0) -- (4,7) -- (11.75,7);
\draw[ray] (0,0) |- (11.75,-1);
\draw[ray] (-2,0) -- (-2, -3) -- (5,-3) -- (5,-7.75);
\draw[ray] (-4,2) -- (-6, 2) -- (-6,-2) -| ++(-1,-1) -| ++(-1,-1) -| ++(-1,-1) -| ++(-1,-1) -| ++(-1,-1) -- ++(-.75,0);
\draw[ray] (-2,3) -- (-11.75, 3);

\draw[connection] (-5, 3.1) .. controls (-5,7) .. (-.1, 7);
\draw[connection] (-8, 3.1) .. controls (-8,9) .. (-.1, 9);
\draw[connection] (-10, 3.1) .. controls (-10,11) .. (-.1, 11);

\draw[connection] (.1,6) -- (2.9,6);
\draw[connection] (.1,8) -- (2.9,8);
\draw[connection] (.1,10) -- (2.9,10);

\draw[connection] (3.1,7) -- (3.9,7);
\draw[connection] (3.1,9) .. controls (7,9) .. (7,7.1);
\draw[connection] (3.1,11) .. controls (10,11) .. (10,7.1);

\draw[connection] (4.25,2) .. controls (6,2) ..  (6, -.9);
\draw[connection] (8,6.9) -- (8, -.9);
\draw[connection] (11,6.9) -- (11, -.9);

\draw[connection] (5, -1.1) -- (5, -2.9);
\draw[connection] (7, -1.1) .. controls (7,-5) .. (5.1, -5);
\draw[connection] (10, -1.1) .. controls (10,-7) .. (5.1, -7);

\draw[connection] (-2.1,-2) -- (-5.9, -2);
\draw[connection] (4.9,-4) -- (-7.9, -4);
\draw[connection] (4.9,-6) -- (-9.9, -6);

\draw[connection] (-6.1, -1) .. controls (-7,-1) .. (-7, 2.9);
\draw[connection] (-9, -3.9) -- (-9, 2.9);
\draw[connection] (-11, -5.9) -- (-11, 2.9);
    \end{tikzpicture}
    \caption{The ray graphs in the full-grid are cycles.}
    \label{fig:grid}
\end{figure}

\fullgrid*
\begin{proof}
    Let~$G$ be the full-grid. 
    Note that~$G$ has a unique end and, furthermore, ${G - R}$ has at most one end for any ray~${R \in G}$. 
    It follows by Lemma~\ref{l:connraygraph} that the ray graph of any finite family of three or more rays is $2$-connected. 
    Hence, the unique end of~$G$ is non-linear and so, by Theorem~\ref{t:nonlin}, $G$ is $\preceq$-ubiquitous
\end{proof}

\begin{remark}
    In fact, every ray graph in the full-grid is a cycle (see Figure~\ref{fig:grid}).
\end{remark}

\gridprod*
\begin{proof}
    If~$G$ is a path or a ray, then~${G \square \mathbb{Z}}$ is a subgraph of the half-grid~${\N\square\Z}$ and thus $\preceq$\nobreakdash-ubiquitous by Theorem~\ref{t:halfgrid}. 
    If~$G$ is a double ray, then~${G \square \mathbb{Z}}$ is the full-grid and thus $\preceq$\nobreakdash-ubiquitous by Corollary~\ref{c:fullgrid}. 
    
    Otherwise, let~$G'$ be a finite connected subgraph of~$G$ which is not a path and let~$H$ be~$\mathbb{Z}$ or~$\mathbb{N}$. 
    We note first that~${G \square H}$ has a unique end. Furthermore, for any ray~$R$ of~$H$ it is clear that~$G'$ is a subgraph of~${\RG_{G \square H}((\{v\} \square R)_{v \in V(G')})}$, and so this ray graph is not a path. 
    Hence by Lemma~\ref{l:linearraygraph}, ${G \square H}$ has nowhere-linear end structure and is therefore $\preceq$-ubiquitous by Theorem~\ref{t:nonlin}. 
\end{proof}

Finally let us prove Theorem~\ref{t:dominatedray}. 
Recall that for~${k \in \mathbb{N}}$ we let~$DR_k$ denote the graph formed by taking a ray~$R$ together with~$k$ vertices~${v_1,v_2, \ldots, v_k}$ adjacent to every vertex in~$R$. 
We shall need the following strengthening of Proposition~\ref{p:rayorinfvertex}. 

A \emph{comb} is a union of a ray~$R$ with infinitely many disjoint finite paths, all having precisely their first vertex on~$R$. 
The last vertices of these paths are the \emph{teeth} of the comb.
 
\begin{proposition}
    \label{p:starcomb}\cite[Proposition~8.2.2]{D16}
    Let~$U$ be an infinite set of vertices in a connected graph~$G$. 
    Then~$G$ either contains a comb with all teeth in~$U$ or a subdivision of an infinite star with all leaves in~$U$.
\end{proposition}

\dominatedray*
\begin{proof}
    Let~${R = x_1 x_2 x_3\ldots}$ be the ray as stated in the definition of~$DR_k$ and let ${v_1,v_2, \ldots, v_k}$ denote the vertices adjacent to each vertex of~$R$. 
    Note that if~${k \leq 2}$ then~$DR_k$ is a minor of the half-grid, and hence $\preceq$-ubiquity follows from Theorem~\ref{t:halfgrid}. 
    
    Suppose then that~${k \geq 3}$ and~$\Gamma$ is a graph which contains a thick $DR_k$-tribe~$\Fcal$ each of whose members is tidy. 
    We may further assume, without loss of generality, that for each~${H \in \bigcup \Fcal}$, each~${i \in [k]}$, and each vertex~$x$ of~$H(v_i)$, every component of~${H(v_i)-x}$ contains a vertex~$y$ such that there is some vertex~${r \in R}$ and vertex~${z \in H(r)}$ with~$yz$ the unique edge between~$H(v_i)$ and~$H(r)$
    
    By Lemma~\ref{l:strongconcentrated} we may assume that there is an end~$\epsilon$ of~$\Gamma$ such that~$\Fcal$ is strongly concentrated at~$\epsilon$. 
    If there are infinitely many vertices dominating~$\epsilon$, then~${\aleph_0 DR_k \preceq K_{\aleph_0} \leq \Gamma}$ holds by Proposition~\ref{p:infdomTKaleph}. 
    So, we may assume that only finitely many vertices dominate~$\epsilon$. 
    By taking a thick subtribe if necessary, we may assume that no member of~$\Fcal$ contains such a vertex. 
    
    As before, if we can show that~$\epsilon$ is pebbly, then we will be done by Corollary~\ref{c:pebblyubiq}. 
    So suppose for a contradiction that~$\epsilon$ is not $r$-pebble-win for some~${r \in \mathbb{N}}$.
    
    We first claim that for each~${H \in \mathcal{F}}$ the pullback~${R_H = H^{\downarrow}(R)}$ of~$R$ in~$H$ is an~$\epsilon$-ray. 
    Indeed, since~$\mathcal{F}$ is strongly concentrated at~$\epsilon$ for every finite vertex set~$X$ of~$\Gamma$, $H$ intersects~${C(X,\epsilon)}$. 
    As in Lemma~\ref{l:locfincon}, since~$H$ is a connected graph and does not contain a vertex dominating~$\epsilon$ in~$\Gamma$, $H$ must contain a ray~${S \in \epsilon}$. 
    If~$S$ meets infinitely many branch sets then it must meet infinitely many branch sets of the form~${H(x_i)}$ for some~$x$ and hence, since~$R_H$ meets every~${H(x_i)}$, which are all connected subgraphs, we have that~${R_H \sim S}$ and so~${R_H \in \epsilon}$. Conversely, if~$S$ meets only finitely many branch sets then there must be some~$v_i$ such that~${H(v_i)}$ contains a tail of~$S$. 
    By our assumption on~$H(v_i)$, for any tail of~$S$ the component of~$H(v_i)$ containing that tail meets some edge between~$H(v_i)$ and some~$H(x_j)$. 
    In this case it is also easy to see that~${S \sim R_H}$, and so~${R_H \in \epsilon}$. 
    
    For each~${H \in \bigcup \Fcal}$ and each~${i \in [k]}$ we have that~$H(v_i)$ is a connected subgraph of~$\Gamma$. 
    Let~$U$ be the set of all vertices in~$H(v_i)$ which are the endpoint of some edge in~$H$ between~$H(v_i)$ and~$H(w)$ with~${w \in R}$. 
    Since~$v_i$ dominates~$R$, $U$ is infinite, and so by Proposition~\ref{p:starcomb}, $H(v_i)$ either contains a comb with all teeth in~$U$ or a subdivision of an infinite star with all leaves in~$U$. 
    However in the latter case the centre of the star would dominate~$\epsilon$, and so each~$H(v_i)$ contains such a comb, whose spine we denote by~$R_{H,i}$. 
    Now we set~${\mathcal{R}_H = (R_{H,1}, R_{H,2}, \ldots, R_{H,k}, R_H)}$.
    
    Since~$R_{H,i}$ is the spine of a comb, all of whose leaves are in~$U$, it follows that in the graph~${\RG_{H}(\mathcal{R}_H)}$ each~$R_{H,i}$ is adjacent to~$R_H$. 
    Hence~${\RG_{H}(\mathcal{R}_H)}$ contains a vertex of degree~${k \geq 3}$.
    
    There is some layer~${F \in \mathcal{F}}$ of size~${\ell \geq r+1}$, say~${F = (H_i \colon i \in [\ell])}$. 
    For every~${i \in [r+1]}$ we set~${\mathcal{R}_{H_i} = (R_{H_i,1}, R_{H_i,2}, \ldots, R_{H_i,k}, R_{H_i})}$. 
    Let us now consider the family of disjoint rays
    \[
        \mathcal{R} = \bigcup_{i=1}^{r+1} \mathcal{R}_{H_i}. 
    \]
    
    By construction~$\Rcal$ is a family of disjoint rays which tend to~$\epsilon$ in~$\Gamma$ and by Lemmas~\ref{l:pullbackraygraph} and~\ref{l:rayinducedsubgraph}, $\RG_{\Gamma}(\Rcal)$ contains~${r+1}$ vertices whose degree is at least~${k \geq 3}$. 
    However, by Corollary~\ref{c:pebblyendstructure}, there is a vertex set~$X$ of size at most~$r$ such that~${\RG_{\Gamma}(\Rcal) - X}$ is a bare path~$P$. 
    But then some vertex whose degree is at least~$3$ is contained in the bare path, a contradiction.
\end{proof}

\bibliographystyle{abbrv}
\bibliography{main}

\end{document}